\title{Twists, Higher Dimer Covers, and Web Duality for Grassmannian Cluster Algebras}
\author[Banaian]{Esther Banaian}
\address[Banaian]{Department of Mathematics, University of California, Riverside, CA}
\email{estherbanaian@gmail.com}
\author[Catania]{Elise Catania}
\address[Catania]{School of Mathematics, University of Minnesota, Twin Cities, Minneapolis, MN}
\email{catan042@umn.edu}
\author[Gaetz]{Christian Gaetz}
\address[Gaetz]{Department of Mathematics, University of California, Berkeley, CA}
\email{gaetz@berkeley.edu}
\author[Moore]{Miranda Moore}
\address[Moore]{School of Mathematics, University of Minnesota, Twin Cities, Minneapolis, MN}
\email{moor2340@umn.edu}
\author[Musiker]{Gregg Musiker}
\address[Musiker]{School of Mathematics, University of Minnesota, Twin Cities, Minneapolis, MN}
\email{musiker@umn.edu}
\author[Wright]{Kayla Wright}
\address[Wright]{Department of Mathematics, Johns Hopkins University, Baltimore, MD}
\email{kaylamostasisa@gmail.com}
\begin{document}
\begin{abstract}
We study a twisted version of Fraser, Lam, and Le's higher boundary measurement map, using face weights instead of edge weights, thereby providing Laurent polynomial expansions, in Pl\"ucker coordinates, for twisted web immanants for Grassmannians. In some small cases, Fraser, Lam, and Le observe a phenomenon they call ``web duality'', where web immanants coincide with web invariants, and they conjecture that this duality corresponds to transposing the standard Young tableaux that index basis webs. We show that this duality continues to hold for a large set of $\SL_3$ and $\SL_4$ webs. Combining this with our twisted higher boundary measurement map,  we recover and extend formulas of Elkin--Musiker--Wright for twists of certain cluster variables. We also provide evidence supporting conjectures of Fomin--Pylyavskyy as well as one by Cheung--Dechant--He--Heyes--Hirst--Li concerning classification of cluster variables of low Pl\"ucker degree in $\C[\Gr(3,n)]$. 
\end{abstract}
\maketitle

\tableofcontents

\section{Introduction}
Our work\footnote{An extended abstract \cite{FPSAC2025} of this work appears in the proceedings of FPSAC 2025.} compares elements in the coordinate ring of the Grassmannian and \emph{tensor invariants}. The former we denote by $\C[\Gr(k,n)]$, where $\text{Gr}(k,n)$ is the space of $k$-dimensional linear subspaces of $\mathbb{C}^n$, and $\Gr(k,n)$ denotes the affine cone over the Pl\"ucker embedding of $\text{Gr}(k,n)$. The ring $\C[\Gr(k,n)]$ is generated by the
\emph{Pl\"ucker coordinates} $\Delta_{J}$ for $J\subset[n]$ with $|J| = k$. This description gives a natural $\mathbb{N}^n$-grading on $\C[\Gr(k,n)]$, where the piece associated to $\lambda = (\lambda_1,\ldots,\lambda_n)\in\mathbb{N}^n$  is generated by products of Pl\"ucker coordinates with column $i$ represented $\lambda_i$ times. 

Let $\mathcal{W}_\lambda(\C^r) = \text{Hom}_{\SL_r}\left(\bigotimes_{i=1}^n \bigwedge^{\lambda_i} \C^r, \C\right)$ be the space of tensor invariants of multidegree $\lambda$. This space is spanned by invariants associated to \emph{$\SL_r$ webs}, certain planar graphs embedded in a disk. For $r \leq 4$, \emph{rotation-invariant} bases consisting of \emph{web invariants} are known due to Rumer--Teller--Weyl \cite{Weyl1932} ($r=2$), Kuperberg \cite{kup96} ($r = 3$) and Gaetz, Pechenik, Pfannerer, Striker, and Swanson \cite{GPPSS24} ($r = 4$).

Postnikov defined a \textit{boundary measurement map} linking Pl\"ucker coordinates to \textit{dimer covers} (also called \emph{almost perfect matchings}) on \emph{plabic graphs} \cite{postnikov2006total}.  This map associates to a \emph{network} $N$ (a plabic graph $G$ along with a choice of edge weights in $\mathbb{C}^\times$) a point $\widehat{X}(N)$ in $\Gr(k,n)$. Lam \cite{lam2015dimers}, and later Fraser--Lam--Le \cite{FLL19}, extended the boundary measurement map using $r$-dimer covers\footnote{Multi-sets of edges incident to each internal vertex (resp. boundary vertex $i$) exactly $r$ (resp. $\lambda_i$) times.} on plabic graphs. Each $r$-dimer cover of $G$ with boundary condition $\lambda \in \mathbb{N}^n$ gives rise to an $\SL_r$ web. The $r$-fold boundary measurement $\text{Web}_r(N;\lambda)$ is a linear combination of the web invariants of all $r$-dimer covers of $G$, with coefficients coming from the edge weights of $N$. When $|\lambda| \coloneqq \sum_i\lambda_i = kr$, Fraser, Lam, and Le used $\text{Web}_r(N;\lambda)$ to define the \emph{immanant map}, an isomorphism $\mathcal{W}_\lambda(\C^r)^* \to \C[\Gr(k,n)]_\lambda$. Equivalently, there is a natural perfect pairing $\langle \cdot , \cdot \rangle: \mathcal{W}_\lambda(\C^r) \otimes \C[\Gr(k,n)]_\lambda \to \C$ which we call the \emph{FLL pairing}.

The ring $\C[\Gr(k,n)]$ is one of the most fundamental examples of a \emph{cluster algebra} \cite{scott06}. A plabic graph $G$ determines an initial seed of this cluster algebra, consisting of one Pl\"ucker coordinate for each face of $G$. By \cite{marsh2016twists, muller2017twist}, the boundary measurement map can be used to express the images of the Pl\"ucker coordinates under an important automorphism, the \textit{twist map} $\tau$ of \cite{BFZ}, as Laurent polynomials in the initial seed. Elkin--Musiker--Wright \cite{EMW23} showed that one may moreover express the images of more complicated cluster variables of Pl\"{u}cker degree two or three in terms of double and triple dimer covers using certain \emph{face weights} in place of edge weights. In our first main result, Theorem~\ref{thm:TwistGivenBySumOfPairings}, we give a wide generalization of these results by expressing twists $\tau(f)$ of arbitrary elements $f \in \mathbb{C}[\Gr(k,n)]$ as Laurent polynomials in the initial seed, using higher dimer covers, the FLL pairing, and face weights.  When $\tau(f)$ is a cluster variable up to multiplication by frozens (in particular when $f$ is a cluster variable \cite[Prop. 8.10]{marsh2016twists}), this realizes the \emph{Laurent phenomenon}. These results have a natural interpretation in terms of $\text{Web}_r^\tau(N;\lambda)$, a twisted version of Fraser--Lam--Le's higher boundary measurement map that we introduce (see Corollary~\ref{cor:twistedtensorpairing}).

In Section \ref{sec:ComputingDuals} we explain the interaction between the influential conjectures of Fomin--Pylyavskyy \cite{FP16}, the FLL pairing, and Theorem~\ref{thm:TwistGivenBySumOfPairings}. In agreement with the Fomin--Pylyavskyy conjectures, all known $\text{Gr}(k,n)$ cluster variables are web invariants, and moreover lie in the rotation-invariant web bases \cite{kup96,fraser23,GPPSS24} in the cases in which these are known to exist. In the smallest of these cases the \emph{web immanants}, dual under the FLL pairing to basis webs, themselves lie \cite{FLL19, fraser23} in the appropriate web basis. In light of the results of Section~\ref{sec:TwistsAndDimers}, these phenomena explain the especially simple expressions \cite{marsh2016twists, muller2017twist, EMW23} for twists of low Pl\"{u}cker degree cluster variables, which amount to extracting the coefficients of particular basis webs in $\text{Web}_r^\tau(N;\lambda)$. It is therefore natural to study these phenomena in the cases which are newly accessible since the discovery in \cite{GPPSS24} of an $\SL_4$ web basis.

Suppose for simplicity that $n=kr$ and $\lambda=(1,\ldots,1)\in\mathbb{N}^n$. In this setting, we have $\C[\Gr(k,n)]_\lambda\iso \W_\lambda(\C^k)$, so the FLL pairing yields a duality between $\W_\lambda(\C^r)$ and $\W_\lambda(\C^k).$ Web immanants were computed in \cite{fraser23} for $k=2$ and in \cite[Appendix]{FLL19} for $(k,n)=(3,6)$ and $(3,9)$ to be themselves web invariants. In Theorem \ref{thm:DualityComputation}, we extend these results by computing the dual basis to Kuperberg's $\SL_3$ web basis when $(k,n)=(3,12)$. Basis webs for $\W_\lambda(\C^3)$ (resp.\ $\W_\lambda(\C^4)$) are in bijection with standard Young tableaux of rectangular shape $3\times 4$ (resp.\ $4\times 3$), and our results show that in all but one case, the web immanants are again basis web invariants whose tableau is the transpose of the tableau indexing the original basis web. This duality is depicted in Tables \ref{tablep1} and \ref{tablep2}. As an application, we obtain combinatorial expansion formulas for twists of elements of $\C[\Gr(3,12)]_\lambda$ and $\C[\Gr(4,12)]_\lambda$ (Theorem \ref{thm:twist}), extending results from \cite{EMW23}. 

In our analysis of the Pl\"{u}cker degree 4 web invariants in $\C[\Gr(3,n)]$, we verify that the Fomin--Pylyavskyy conjecture is consistent with enumerative conjectures of \cite{CDHHHL22}. This is described in Section \ref{sec:Counting}.


\section{Preliminaries}\label{sec:preliminaries}

In this section, we review the necessary preliminaries to discuss our work. We begin by recalling the cluster algebra structure on the Grassmannian and reviewing the combinatorics of plabic graphs. This allows us to also state previous results expressing the twist map in terms of dimer partition functions. We then discuss the representation theory and combinatorics of $\SL_3$ and $\SL_4$ webs. This subsection will follow \cite{FP16} and review the relevant conjectures that inspire much of our work. We end this section by discussing the associated tableaux combinatorics related to webs that we will need throughout the paper. 

\subsection{Grassmannian cluster algebras}\label{subsec:gr_clusteralgs}

Let $[n] \coloneqq \{1,2,\ldots,n\}$. For $0 \leq k \leq n$, let $\binom{[n]}{k}$ denote the set of subsets of $[n]$ of size $k$. Each element of the Grassmannian $\text{Gr}(k,n)$ is represented by a full-rank $k\times n$ matrix $M$, modulo the left action of $\SL_k$. For each $k$-subset $I\in\tbinom{[n]}{k},$ the \emph{Pl\"ucker coordinate} $\plu I$ is the minor of $M$ with columns indexed by $I$. The $\plu I$ are projective coordinates defining the Pl\"{u}cker embedding $\text{Gr}(k,n)\hookrightarrow\mathbb{P}^{\binom{n}{k}-1}$. We often find it convenient to work with the affine cone $\Gr(k,n)$, where the $\Delta_I$ are genuine functions generating the coordinate ring $\C[\Gr(k,n)]$. 
Given $I \subseteq [n]$, we write $\delta^I \in \N^n$ for the associated indicator vector.  The ring $\C[\Gr(k,n)]$ is naturally $\mathbb{N}^n$-graded, with $\Delta_I$ having degree given by $\delta^I$. 

\begin{definition}[\cite{postnikov2006total}]
A \emph{plabic graph} is a planar graph embedded in a disk, with vertices properly\footnote{Plabic graphs are often allowed to have non-proper vertex colorings, but can always be made bipartite by applying contraction moves.} colored black or white. The boundary vertices are all colored black, are labeled by $1,2,\ldots,n$ in clockwise order, and are each incident to exactly one edge. A plabic graph $G$ has type $(k,n)$ if the number of internal white vertices minus the number of internal black vertices is $k$. For example, the claw graph consisting of a single internal white vertex attached to every boundary vertex is of type $(1,n)$.

The \emph{trip permutation} $\pi$ of $G$ is defined as follows. For each boundary vertex $i$, define a \emph{trip} starting at the boundary vertex $i$ and following the edges of the graph by turning maximally left at white vertices and maximally right at black vertices, until reaching another boundary vertex, defined to be $\pi(i).$ A plabic graph is \emph{reduced} if it attains the minimum number of faces among all plabic graphs with the same trip permutation. If $G$ is reduced, the trips do not self-intersect, and therefore divide the disk into left and right halves. In this case, we label each face $\mathbf{f}$ of $G$ with a $k$-set $I_\mathbf{f}\in\tbinom{[n]}{k}$, where $i\in I_\mathbf{f}$ if and only if $\mathbf{f}$ lies to the left of the trip ending at $i$. We say that a plabic graph $G$ of type $(k,n)$ is \emph{top cell} if the trip permutation is $i\mapsto (i+k)\mod n$. 
\end{definition}

\begin{figure}
\includegraphics[width=.4\textwidth]{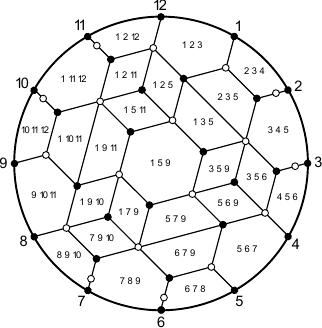}
    \caption{A top cell plabic graph of type $(3,12)$.}
    \label{fig:top cell plabic graph}
\end{figure}

\emph{Cluster algebras} \cite{FZ02} are commutative rings $A$ with distinguished generators called \emph{cluster variables}, grouped into \emph{clusters}, which are defined in a recursive way from a \emph{seed} consisting of a quiver $Q$ and a collection $\mathbf{x}$ of algebraically independent elements of $A$. The Grassmannian coordinate ring $\C[\Gr(k,n)]$ has the structure of a cluster algebra \cite{scott06}. Any reduced top cell plabic graph $G$ determines a seed for $\C[\Gr(k,n)]$ with $Q$ planar dual to $G$ and $\mathbf{x}=(\Delta_{I_\mathbf{f}})_{\text{$\mathbf{f}$ a face of $G$}}$. The Pl\"ucker coordinates for the $n$ cyclically consecutive $k$-sets are \emph{frozen variables} and are contained in every cluster. The rest of the Pl\"ucker coordinates are examples of \emph{mutable} cluster variables.

\subsection{The boundary measurement map}

For $G$ a plabic graph, an \emph{$r$-dimer cover} $D$ is a multiset of edges of $G$ such that each internal vertex is incident to exactly $r$ edges from $D$, counted with multiplicity. For $\lambda \in \N^n$, we write $\D_{r,\lambda}(G)$ for the set of all $r$-dimer covers $D$ of $G$ such that, for each $i\in[n]$, $D$ contains the edge incident to boundary vertex $i$ with multiplicity $\lambda_i$. 

We often say just \emph{dimer cover} for a $1$-dimer cover; these are central in Postnikov's boundary measurement map. When $G$ has type $(k,n)$, all dimer covers use exactly $k$ of the boundary vertices. A \emph{network} $N$ is a plabic graph together with a weight $\wt(\mathbf e) \in \mathbb{C}^{\times}$ assigned to each edge $\mathbf e$. We say that $N$ is a \emph{$(k,n)$-network} if its underlying graph is a reduced top cell plabic graph of type $(k,n)$. An  $r$-dimer cover $D$ of the underlying plabic graph of a network $N$ has \emph{edge weight}
\begin{equation} \label{eq:eweight} \ewt_N(D)\coloneqq \prod_{\mathbf{e} \in D} \wt(\mathbf{e}).\end{equation}

\begin{definition}[\cite{postnikov2006total}]
The \emph{boundary measurement map} $\widehat X$ associates to each $(k,n)$-network $N$ whose underlying reduced top cell plabic graph is $G$ a point $\widehat X(N)\in\Gr(k,n)$, given by
    \[\widehat X(N)\coloneqq\left(\plu I(N)\ \middle|\ I\in\tbinom{[n]}{k}\right) \in\Gr(k,n) \subset\C^{\tbinom{n}{k}},\]
    where \[\plu I(N)\coloneqq \sum_{D\in \mathcal{D}_{1,\delta^I}(G)}\ewt_N(D).\]
\end{definition}

It is a nontrivial fact (see \cite[Theorem 1.1]{postnikov2006total}, \cite[Corollary 2.7]{PSW}, and \cite[Theorem 4]{lam2015dimers})
that $\widehat X(N)$ in fact lies inside the image of the Pl\"{u}cker embedding of $\Gr(k,n)$, hence our usage of function $\Delta_I$ without abuse of notation.

\subsection{Web invariants}

For $\lambda\in \N^n$ we will consider two kinds of spaces of $\SL_r(\C)$ tensor invariants:
\begin{align*}
    \W_\lambda(\C^r) &\coloneqq\Hom_{\SL_r}\left(\bigotimes_{i=1}^n \bigwedge^{\lambda_i} (\C^r), \C\right), \\
    \C[\Gr(r,n)]_\lambda &\iso \Hom_{\SL_r}\left(\bigotimes_{i=1}^n \Sym^{\lambda_i} (\C^r), \C\right).
\end{align*}

\emph{Webs} were introduced by Kuperberg \cite{kup96} in order to provide a diagrammatic calculus for such spaces of $\SL_r$ tensor invariants. Our conventions for webs most closely follow those introduced by Fraser--Lam--Le \cite{FLL19}.

\begin{definition}
An \emph{$\SL_r$ tensor diagram} is a properly vertex-bicolored graph $X$, embedded in a disk with fixed cyclic edge ordering around each vertex, with $n$ black\footnote{There is a more general class of tensor diagrams where the boundary vertices may be black or white, see \cite{FP16}, but we will not consider those here.} vertices on the boundary circle, labeled clockwise by $1,2,\ldots,n$, and with edge multiplicities $m(\mathbf{e}) \geq 1$, satisfying:
\begin{enumerate}
\item All edge crossings are transverse.
\item \label{item:multiplicity-r} For all internal vertices $v$, we have $\sum_{v \in \mathbf{e}} m(\mathbf{e})=r$.
\end{enumerate}
A \emph{web} is a tensor diagram whose embedding is planar. When an edge has multiplicity $>1$, we depict it as an \emph{hourglass edge}, following the conventions of \cite{GPPSS24} (see Figure \ref{fig:SL_5 webs}).

A tensor diagram has \emph{degree} $\lambda=(\lambda_1,\ldots,\lambda_n)\in\N^n$ if, for all $1\leq i \leq n$, $\lambda_i$ is the number of edges incident to boundary vertex $i$ (counted with multiplicity). For a $\SL_r$ tensor diagram of degree $\lambda$, we always have that $|\lambda|$ is divisible by $r$; we call $|\lambda|/r$ the \emph{Pl\"{u}cker degree}. 

We say a tensor diagram is \emph{semistandard} if all edges $\mathbf{e}$ incident to the boundary have $m(\mathbf{e})=1$. We say it is \emph{dual semistandard} if there is at most one edge incident to each boundary vertex, and \emph{standard} if it is both semistandard and dual semistandard. Note that a tensor diagram is standard if and only if $\lambda\in\{0,1\}^n.$
\end{definition}

\begin{figure}
\begin{tabular}{ccc}
\includegraphics[scale=.9]{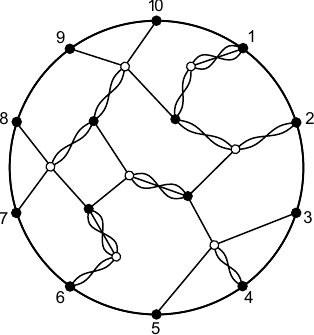}
&
\includegraphics[scale=.9]{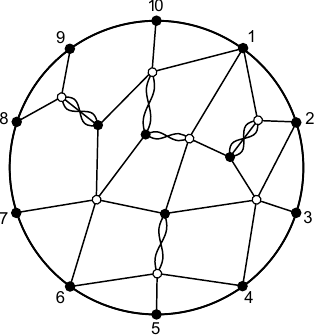}
&
\includegraphics[scale=.9]{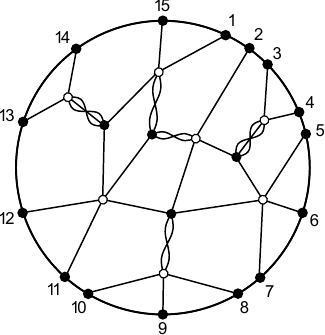}\\
$W$&$X$&$\overline X$
\end{tabular}

\caption{A dual semistandard $\SL_5$ web $W$ of degree $\lambda$, a semistandard $\SL_5$ web $X$ of degree $\lambda$, and the unclasped web $\overline{X}$, where $\lambda = (3,2,1,2,1,2,1,1,1,1)\in\N^{10}$.}
\label{fig:SL_5 webs}
\end{figure}

We now explain how $\SL_r$ tensor diagrams represent tensor invariants. Our conventions here are consistent with those of \cite{FLL19,GPPSS24}.

\begin{definition} \label{defn:dualss}
    Let $\lambda\in\N^n$ with $|\lambda|=kr,$ and let $W$ be a dual semistandard $\SL_r$ web of degree $\lambda$. Let $\mathcal{S} = (S(1),\ldots,S(n))$ be a list of subsets $S(i) \subseteq [r]$ such that $|S(i)|=\lambda_i$ for all $i$. From each such $\S$, we can define a basis element $E_\S\in\bigotimes_{i=1}^n \bigwedge^{\lambda_i} (\C^r)$ as follows. Let $s_{i\ell}$ denote the $\ell$th element of set $S(i)$, with each set $S(i)$ sorted in increasing order. Then, we define
    \[E_\mathcal{S}\coloneqq(e_{s_{11}}\wedge \cdots\wedge e_{s_{1\lambda_1}})\otimes(e_{s_{21}}\wedge \cdots\wedge e_{s_{2\lambda_2}})
    \otimes\cdots\otimes
    (e_{s_{n1}}\wedge \cdots\wedge e_{s_{n\lambda_n}}),\]
    where $e_j$ denotes the $j$th standard basis element of $\C^r$.
    
    We define the \emph{word} of $\mathcal{S}$ by reading off the indices of the basis vectors in $E_\mathcal{S}$ from left to right: 
    \[w(\mathcal{S})\coloneqq
    s_{11} \cdots s_{1\lambda_1}s_{21}\cdots s_{2\lambda_2}\cdots
    s_{n1}\cdots s_{n\lambda_n}
    \in [r]^{kr}.\]
    Define $\sign(\mathcal{S})\in\pm1$ according to the parity of the number of inversions in $w(\mathcal{S})$.

    For a dual semistandard $\SL_r$ web $W$, a \emph{consistent $r$-labeling} $\L$ of $W$ with \emph{boundary label condition} $\mathcal{S}$ is an assignment of a subset $L(\mathbf{e})\subseteq [r]$ to each edge $\mathbf{e}$ of $W$ such that:
    \begin{enumerate}
        \item $|L(\mathbf{e})|=m(\mathbf{e})$;
        \item $\bigcup_{v \in \mathbf{e}} L(\mathbf{e}) = [r]$ for each internal vertex $v$; and
        \item $L(\mathbf{e}_i)=S(i)$ for each boundary vertex $i$, where $\mathbf{e}_i$ is the incident edge.
    \end{enumerate}
    We denote by $A(\mathcal{S};W)$ the set of consistent labelings of $W$ with  boundary label condition $\mathcal{S}$, and by $a(\mathcal{S};W)$ the number of such labelings.

    The \emph{word} of a web $W$, denoted $w(W)$, is the lexicographically smallest word $w(\mathcal{S}) \in[r]^{kr}$ such that $a(\mathcal{S};W)> 0$. Define $\mathcal{S}_W=(S_W(1),\ldots,S_W(n))$ to be the boundary label condition such that $w(\mathcal{S}_W)=w(W)$, and define $\sign(W) \coloneqq \sign(\mathcal{S}_W)$.
\end{definition}

\begin{remark}
If $\lambda\in\N^n$ with $|\lambda|=kr$, and $W$ is a dual semistandard web of degree $\lambda$, then a necessary condition in order to have $a(\mathcal{S};W)>0$ is that 
\begin{equation}\label{eq:multiset_condition}
\bigsqcup_i S(i) = \{1^k,\ldots,r^k\}\quad\text{as a multiset.}
\end{equation} 
That is, each label appears exactly $k$ times on the boundary. In particular, this means that $\mathcal{S}_W$ satisfies condition \eqref{eq:multiset_condition}. 
\end{remark}

\begin{definition}\label{def:unclasping}
Let $\lambda\in\N^n$ with $|\lambda|=kr,$ and let $X$ be a semistandard $\SL_r$ web of degree $\lambda$. Define $\overline X$ to be the \emph{unclasping} of $X$, where each boundary vertex $i$ of $X$ which is incident to $\lambda_i$ edges has been separated into $\lambda_i$ boundary vertices, each incident to a single edge, while preserving planarity. Hence $\overline{X}$ is a standard $\SL_r$ web\footnote{Since $\overline{X}$ is standard, it is also dual semistandard and Definition \ref{defn:dualss} applies.} with $kr$ boundary vertices. The word of $X$ is defined by $w(X)\coloneqq w(\overline X)$ and $\sign(X)\coloneqq\sign(\overline X)$. 

Define $\mathcal{S}_X \coloneqq (S_X(1),\ldots,S_X(n))$ by $ S_X(i)=\bigsqcup_\ell S_{\overline X}(\ell),$ where $\ell$ runs over the $\lambda_i$ boundary vertices of $\overline X$ which came from unclasping vertex $i$ of $X$. Note that the boundary label sets $S_X(i)$ may be multisets.
\end{definition}

\begin{example}\label{ex:boundary-subsets}
    Let $W$ and $X$ be the dual semistandard and semistandard $\SL_5$ webs from Figure \ref{fig:SL_5 webs}. Both have degree $\lambda=(3,2,1,2,1,2,1,1,1,1)$, and $|\lambda|=15$. We have:
    \begin{align*}
        \mathcal{S}_W&=(\{1,2,3\},\{1,4\},\{1\},\{2,3\},\{5\},\{2,4\},\{3\},\{5\},\{4\},\{5\}),\\
        w(W)&=123141235243545,\\
        \mathcal S_{\overline X}&=(\{1\},\{1\},\{1\},\{2\},\{2\},\{3\},\{4\},\{2\},\{3\},\{5\},\{3\},\{4\},\{4\},\{5\},\{5\}),\\
        w(X)&=w(\overline X)=111223423534455,\\
        \mathcal{S}_X&=(\{1,1,1\},\{2,2\},\{3\},\{2,4\},\{3\},\{3,5\},\{4\},\{4\},\{5\},\{5\}).
    \end{align*}
    Observe that $\mathcal{S}_W,\mathcal S_{\overline X},\mathcal{S}_X$ satisfy condition \eqref{eq:multiset_condition} (since each digit appears exactly $\frac{15}{5}=3$ times), and $a(\mathcal S_W; W)=a(\mathcal{S}_{\overline X};\overline{X})=1$ (see also Theorem \ref{thm:asw1}). Figure \ref{fig:SL_5 webs colored} shows the unique consistent labelings $\mathcal{L}_W\in A(\mathcal S_W; W)$ and $\mathcal{L}_{\overline{X}}\in A(\mathcal S_{\overline{X}}; \overline{X})$.
\end{example}

\begin{figure}
\begin{tabular}{ccc}
\adjustbox{valign=c}{\includegraphics[scale=.9]{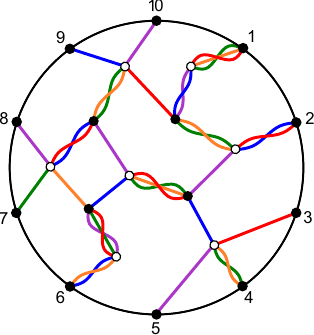}}
&
\adjustbox{valign=c}{\includegraphics[scale=.9]{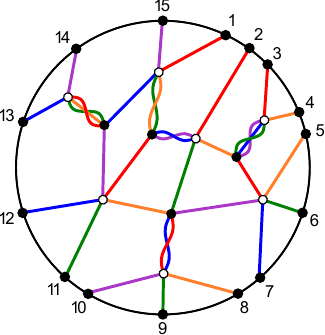}}
&
\begin{tabular}{|c|}
\hline
Key\\
1 {\color[HTML]{FF0000} \rule[2pt]{2cm}{1.5pt}}\\
2 {\color[HTML]{FF7F2A} \rule[2pt]{2cm}{1.5pt}}\\
3 {\color[HTML]{008000} \rule[2pt]{2cm}{1.5pt}}\\
4 {\color[HTML]{0000FF} \rule[2pt]{2cm}{1.5pt}}\\
5 {\color[HTML]{AB37C8} \rule[2pt]{2cm}{1.5pt}}\\
\hline
\end{tabular}
\\
\adjustbox{valign=b}{$\L_W$}&\adjustbox{valign=b}{$\L_{\overline X}$}
\end{tabular}
\caption{The unique consistent labelings $\L_W\in A(\S_W; W)$ and $\L_{\overline{X}}\in A(\S_{\overline{X}}; \overline{X})$ (see Example \ref{ex:boundary-subsets}). For each edge $\mathbf{e},$ the subset $L(\mathbf{e})\subset[5]$ is denoted by the subset of colors appearing on that edge.}
\label{fig:SL_5 webs colored}
\end{figure}

The following characterization of web invariants is a consequence of \cite[Lemma 5.4]{FLL19}, but we can take it as a definition. Applying a web invariant to a basis element of $\bigotimes_i \bigwedge^{\lambda_i}(\C^r)$ or $\bigotimes_i \Sym^{\lambda_i}(\C^r)$ amounts to counting consistent labelings of the web, with a sign which depends on the basis element. Note that this definition only works for webs; it does not apply for non-planar tensor diagrams. Throughout this paper we will use $W$ to denote a web as a graph, and $\mathbf W$ to denote the corresponding web invariant as a function.

\begin{definition}[\cite{FLL19}]\label{def:W_lambdainvariant} 
Let $W$ be a dual semistandard $\SL_r$ web of degree $\lambda$. The \emph{web invariant} $\mathbf{W}\in \mathcal{W}_\lambda(\C^r)$ is defined by its action on basis elements $E_\mathcal{S}$:
\[\mathbf{W}(E_\mathcal{S})\coloneqq \sign(\mathcal{S})a(\mathcal{S};W).\]
Such web invariants span $\mathcal{W}_\lambda(\C^r)$. 
\end{definition}

Now we consider semistandard webs, whose invariants lie in $\Hom_{\SL_r}\left(\bigotimes_{i=1}^n \Sym^{\lambda_i} (\C^r), \C\right) \iso \C[\Gr(r,n)]_\lambda$.

\begin{definition}[\cite{FP16}]
Let $X$ be a semistandard $\SL_r$ web of degree $\lambda$, with $|\lambda|=kr$. The action of the web invariant $\mathbf{X}\in\C[\Gr(r,n)]_\lambda$ on an $n$-tuple of vectors $v_i\in\C^r$ is defined by: 
    \[ \mathbf{X}(v_1,\ldots,v_n) \coloneqq \mathbf{\overline X}(\underbrace{v_1\otimes\cdots\otimes v_1}_{\lambda_1\text{~times}}\otimes\underbrace{v_2\otimes\cdots\otimes v_2}_{\lambda_2\text{~times}}\otimes\cdots\otimes\underbrace{v_n\otimes\cdots\otimes v_n}_{\lambda_n\text{~times}}),\]
where $\mathbf{\overline{X}}\in \W_{(1^{kr})}(\C^r)$ with is the unclasped web invariant.
\end{definition}

Note that when $\lambda=(1^n)$ the spaces $\W_\lambda(\C^r)$ and $\C[\Gr(r,n)]_\lambda$ are the same.

\subsubsection{Skein relations on tensor diagrams}

In order to prove Theorem \ref{thm:DualityComputation} (exhibiting a pairing between $\C[\Gr(3,12)]_{(1^{12})}$ and $\C[\Gr(4,12)]_{(1^{12})}$), we will need to expand standard $\SL_3$ web invariants in terms of Pl\"ucker coordinates. For these computations we will find it convenient to convert to the conventions of \cite{FP16} for associating an invariant to a (not necessarily planar) $\SL_3$ tensor diagram. 

\begin{definition}[Fomin--Pylyavskyy(FP) sign convention, \cite{FP16}] 
\label{def:FP_SL3_tensor_invariant}
Let $T$ be a standard $\SL_3$ tensor diagram. Applying Definition \ref{defn:dualss} to the non-planar case, we let $\mathcal{S}\in[3]^n$ be a boundary label condition, and let $\mathcal{L}\in A(\mathcal{S};T)$ be a consistent labeling of $T$. For each internal vertex $v$ of $T$, define $\sign^\text{FP}(\mathcal{L},v)$ to be the sign of the cyclic permutation of the labels $\{1,2,3\}$ around $v$, where we assign positive sign to clockwise orientation. Define $\sign^\text{FP}(\mathcal{L})\coloneqq\prod_{v}\sign^\text{FP}(\mathcal{L},v)$, where the product is over all internal vertices $v$ of $T$.

The \emph{FP tensor invariant} $\mathbf{T}^{\text{FP}}\in\W_\lambda(\C^3)$, for $\lambda\in\{0,1\}^n$, is defined by its action on basis elements $E_\mathcal{S}$ (compare with \cite[Equation 4.1]{FP16}):
\[\mathbf{T}^{\text{FP}}(E_\mathcal{S})\coloneqq \sum_{\L\in A(\mathcal{S};T)}\sign^\text{FP}(\mathcal{L}).\]
\end{definition}

When $W$ is a \emph{web}, the FP sign convention agrees with Definition \ref{def:W_lambdainvariant} up to possibly a global sign. Here we focus only on the setting where we will use this result.

\begin{proposition}\label{prop:sign conversion}
    For each standard $\SL_3$ web $W$ of degree $(1^{12})$, we have 
    \[ \mathbf{W}^{\text{FP}}=\sign(W)\mathbf{W}.\]
\end{proposition}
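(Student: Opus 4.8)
The plan is to compare the two sign conventions term by term on each basis element $E_{\mathcal S}$ and show that the discrepancy is a single global sign equal to $\sign(W)$. Fix a standard $\SL_3$ web $W$ of degree $(1^{12})$ and a boundary label condition $\mathcal S\in[3]^{12}$ satisfying the multiset condition $\eqref{eq:multiset_condition}$ (otherwise both sides vanish on $E_{\mathcal S}$ and there is nothing to check). By Definition~\ref{def:W_lambdainvariant} we have $\mathbf W(E_{\mathcal S})=\sign(\mathcal S)\,a(\mathcal S;W)$, while by Definition~\ref{def:FP_SL3_tensor_invariant} we have $\mathbf W^{\text{FP}}(E_{\mathcal S})=\sum_{\mathcal L\in A(\mathcal S;W)}\sign^{\text{FP}}(\mathcal L)$. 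So it suffices to prove that there is a sign $\varepsilon(W)\in\{\pm1\}$, \emph{independent of $\mathcal S$ and of $\mathcal L$}, such that $\sign^{\text{FP}}(\mathcal L)=\varepsilon(W)\,\sign(\mathcal S)$ for every $\mathcal L\in A(\mathcal S;W)$, and then to identify $\varepsilon(W)$ by evaluating on the distinguished boundary condition $\mathcal S_W$, where $\sign(\mathcal S_W)=\sign(W)$ by definition and (by Theorem~\ref{thm:asw1}, referenced in Example~\ref{ex:boundary-subsets}) $a(\mathcal S_W;W)=1$, so $A(\mathcal S_W;W)$ is a singleton $\{\mathcal L_W\}$, forcing $\varepsilon(W)=\sign(W)\cdot\sign^{\text{FP}}(\mathcal L_W)$. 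A little care is needed: once $\varepsilon(W)$ is pinned down this way, consistency of $\mathbf W^{\text{FP}}=\sign(W)\mathbf W$ on \emph{all} $E_{\mathcal S}$ requires $\sign^{\text{FP}}(\mathcal L_W)=+1$; this is itself part of what must be proved, and it will come out of the structural argument below rather than being assumed.

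The core is therefore the claim that $\mathcal L\mapsto\sign^{\text{FP}}(\mathcal L)\cdot\sign(\mathcal S)$ is constant over all pairs $(\mathcal S,\mathcal L)$ with $\mathcal L\in A(\mathcal S;W)$. I would prove this by induction on the number of internal vertices of $W$, peeling off one boundary-adjacent internal trivalent vertex at a time, i.e. via the standard "state-sum"/gluing description of $\SL_3$ tensor invariants. Since $W$ is a web (planar), every internal vertex is trivalent with the three incident labels forming all of $\{1,2,3\}$, and the contribution $\sign^{\text{FP}}(\mathcal L,v)$ is the sign of the cyclic order of those three labels read clockwise around $v$. The base case is a web with no internal vertices: then $W$ is a disjoint union of chords (arcs) matching pairs of boundary vertices; a consistent labeling forces the two endpoints of each chord to carry \emph{the same} label in $[3]$, but the multiset condition says each of $1,2,3$ appears exactly $4$ times on the boundary of a degree-$(1^{12})$ web, which is impossible to realize by equal-label chords unless... — in fact for $r=3$ a chord diagram can only support labelings whose boundary word has each letter appearing an even number of times, so typically $a(\mathcal S;W)=0$ and there is no base case to check; the genuine base case is a single "tripod" (one internal vertex joined to three boundary vertices), where $a(\mathcal S;W)\in\{0,1\}$ and one computes directly that $\sign^{\text{FP}}(\mathcal L)=\sign(\mathcal S)$ when the labeling exists, using that the three boundary labels are a permutation of $123$ and the cyclic sign around $v$ matches the parity of inversions of that permutation. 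For the inductive step, I would fix an internal vertex $v$ adjacent to a boundary vertex, say boundary vertex $i$ via an edge labeled $S(i)$; contracting or "resolving" at $v$ expresses $A(\mathcal S;W)$ as a disjoint union over the choice of labels on the other two edges at $v$ (which must be the two elements of $[3]\setminus S(i)$, in one of two orders), producing a smaller web $W'$ with two new boundary vertices replacing $v$ and a new boundary condition $\mathcal S'$; one then checks that $\sign^{\text{FP}}(\mathcal L,v)$ exactly accounts for the change $\sign(\mathcal S')/\sign(\mathcal S)$ coming from the reordering/insertion of boundary letters, so the invariant $\sign^{\text{FP}}(\mathcal L)\sign(\mathcal S)$ is unchanged. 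Planarity is used to ensure the two new boundary vertices can be inserted into the cyclic boundary order next to $i$ without crossings, so that the sign bookkeeping is a local transposition count.

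The main obstacle is precisely this local sign bookkeeping at an internal vertex: one must reconcile three different conventions living in the same computation — the clockwise-cyclic-sign rule defining $\sign^{\text{FP}}(\mathcal L,v)$, the inversion-count rule defining $\sign(\mathcal S)$ via the word $w(\mathcal S)$, and the wedge/$\varepsilon$-tensor contraction implicit in "$E_{\mathcal S}$" and in the identification of $\mathbf W$ with the labeling count in Definition~\ref{def:W_lambdainvariant} (which itself rests on \cite[Lemma 5.4]{FLL19}). Rather than redo the $\varepsilon$-tensor contraction from scratch, I would lean on the fact — already implicit in the setup following \cite{FLL19,FP16} — that both $\mathbf W$ and $\mathbf W^{\text{FP}}$ arise from contracting the \emph{same} underlying graph with $\varepsilon$-tensors at internal trivalent vertices and $\delta$-tensors along edges, the only difference being a fixed choice of how each local $\varepsilon$ (equivalently, each wedge $e_a\wedge e_b\wedge e_c$) is normalized relative to the sorted order $123$; since $W$ has a fixed planar embedding, these local normalizations multiply to a single global constant $\pm1$ independent of the state, which is the content of "up to possibly a global sign" quoted before the proposition. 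The proposition then says this global sign is $\sign(W)$, which is forced by evaluating at $\mathcal S_W$ as above. I would present the argument in the inductive form for self-containedness but flag that readers willing to accept the $\varepsilon$-tensor picture can see it immediately; the subtlety I would be most careful about in writing is orientation conventions (clockwise vs. counterclockwise, and the clockwise labeling of boundary vertices $1,\dots,n$) so that the claimed equality holds on the nose and not with an extra sign.
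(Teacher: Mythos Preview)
Your overall architecture matches the paper's: both arguments first establish that $\mathbf W^{\text{FP}}$ and $\mathbf W$ agree up to a global sign, and then pin down that sign by evaluating at the distinguished boundary condition $\mathcal S_W$. The paper, however, does not attempt your inductive/structural argument. For the first step it simply cites \cite[Lemma 5.4]{FLL19}; for the second it notes $\mathbf W(E_{\mathcal S_W})=\sign(W)\,a(\mathcal S_W;W)$ and then verifies, by direct (finite, computer) computation over all standard $\SL_3$ webs on $12$ boundary vertices, that $\mathbf W^{\text{FP}}(E_{\mathcal S_W})>0$. That positivity forces the global sign to be $\sign(W)$.

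There is a genuine gap in your proposal, and it is precisely in the step the paper resolves by computation. Your induction, as you describe it, would show that $\sign^{\text{FP}}(\mathcal L)\cdot\sign(\mathcal S)$ takes the value $+1$ on the tripod and is \emph{preserved} by peeling off a boundary-adjacent internal vertex; hence $\varepsilon(W)=+1$ for every $W$. But the proposition asserts $\varepsilon(W)=\sign(W)$, and $\sign(W)$ is not always $+1$ for webs of degree $(1^{12})$: for example, $w(X_2)=123123121323$ has $17$ inversions, so $\sign(X_2)=-1$. Thus the claim ``$\sign^{\text{FP}}(\mathcal L,v)$ exactly accounts for the change $\sign(\mathcal S')/\sign(\mathcal S)$'' cannot be correct as stated. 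What actually holds is that the peeling step changes the product $\sign^{\text{FP}}(\mathcal L)\sign(\mathcal S)$ by a factor depending only on the local structure of the peel (color of $v$, position of the new boundary vertices), not on $(\mathcal S,\mathcal L)$. That is enough to reprove the constancy statement from \cite{FLL19}, but it does \emph{not} compute the constant; tracking these extra signs through an arbitrary sequence of peels is exactly the delicate orientation bookkeeping you flag at the end, and you have not carried it out. Equivalently, the assertion that $\sign^{\text{FP}}(\mathcal L_W)=+1$ ``will come out of the structural argument'' is unsupported: your induction at best gives $\sign^{\text{FP}}(\mathcal L_W)=\varepsilon(W)\sign(W)$, which is circular. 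The paper sidesteps this entirely by checking $\mathbf W^{\text{FP}}(E_{\mathcal S_W})>0$ case by case.
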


\begin{proof}
By the analysis of signs in \cite[Lemma 5.4]{FLL19}, we know that these two invariants agree up to a global sign. By construction, we have $\mathbf{W}(E_{\mathcal{S}_W})=\sign(W)a(\mathcal{S}_W;W)$. Therefore, the claim will follow after we compute $\mathbf{W}^\text{FP}(E_{\mathcal{S}_W})>0$ for all standard webs $W$ with exactly 12 boundary vertices. This is verified by direct computation.
\end{proof}

\begin{remark}\label{rem:props of FP signs}
The FP sign convention (Definition \ref{def:FP_SL3_tensor_invariant}) has the following advantageous properties:
\begin{itemize}
\item The local relation shown in Figure \ref{fig:sl3_wrench} (the \emph{wrench relation}), allows us to write a web invariant as a linear combination of invariants from tensor diagrams with fewer internal vertices.
\item Superimposition of tensor diagrams is multiplicative: if a tensor diagram is the union of two components, i.e.\ $T=R\cup S$ where $R$ and $S$ are disjoint as graphs, 
then $\mathbf{T}^\text{FP}=\mathbf{R}^\text{FP}\cdot \mathbf{S}^\text{FP}.$
\item FP tensor invariants behave well with respect to rotation of tensor diagrams. Let $\rho(\mathbf{T})$ denote the invariant obtained from rotating a tensor diagram $T$ counterclockwise by $2\pi/n$, and let $\rho(\S)=(S(2),\ldots,S(n),S(1))$. Then
$\mathbf{T}^\text{FP}(E_\mathcal{S})=\rho(\mathbf{T}^\text{FP})(E_{\rho(\mathcal{S})}).$

\end{itemize} 
\end{remark}

\begin{figure}[h]
\[
\adjustbox{valign=c}{\includegraphics[]{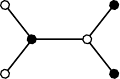}}\quad=\quad\adjustbox{valign=c}{\includegraphics[]{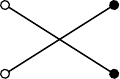}}\quad-\quad\adjustbox{valign=c}{\includegraphics[]{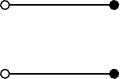}}
\]
\caption{The wrench relation for $\SL_3$ tensor invariants in the FP sign convention.}
\label{fig:sl3_wrench}
\end{figure}

For extensive lists of relations between $\SL_r$ tensor diagrams, we refer the reader to \cite{FP16} ($r=3$) and \cite{GPPSS24} ($r=4$).

\subsection{Web bases and tableaux}

While $\W_\lambda(\C^r)$ and $\C[\Gr(r,n)]_\lambda$ are spanned by web invariants, the sets of webs invariants satisfy certain relations \cite{kup96,CKM} and therefore are not bases. A \emph{web basis} is a subset of the web invariants forming a basis; a web basis is rotation-invariant if the rotation of a basis web with respect to the boundary labels results in another basis web (up to sign). Rotation-invariance is desirable because (among many other reasons) rotation yields an automorphism of the cluster structure on $\C[\Gr(r,n)]$.

For small values of $r$, rotation-invariant web bases are known. The Temperley--Lieb basis for $r=2$ consists of non-crossing matchings between the boundary vertices. Kuperberg \cite{kup96} showed that the subset of $\SL_3$ webs which are \emph{non-elliptic} (meaning that they have no 2-cycles or 4-cycles) form a rotation-invariant basis when $r=3$. Recently, Gaetz, Pechenik, Pfannerer, Striker, and Swanson \cite{GPPSS24} found a rotation-invariant  $\SL_4$ web basis. In particular, their basis disallows 4-cycles which contain an hourglass edge. Finding rotation-invariant $\SL_r$ web bases for $r>4$ is an open problem, although Fraser \cite{fraser23} constructed such a basis for $\C[\Gr(r,n)]_\lambda$ in the special case that $|\lambda|=2r$ (see also \cite{GPPSS-two-column}).

For each of the known web bases above, when $|\lambda|=kr$ there is a bijection between basis webs for $\C[\Gr(r,n)]_\lambda$ (resp. $\W_\lambda(\C^r)$) and semistandard (resp. dual semistandard) Young tableaux of rectangular shape $r\times k$ and content $\lambda$ (see e.g. \cite{KK99, Tym12, GPPSS-promotion}, in addition to the previous references). 

\begin{definition}
Let $\lambda\in\N^n$ with $|\lambda|=kr$, and let $W$ be a (dual) semistandard web of degree $\lambda$. From the boundary label subsets $\mathcal S_W=(S_W(1),\ldots, S_W(n))$, we construct the \emph{tableau of $W$}, denoted $T(W)$, as follows. The shape of $T(W)$ is a rectangle with $r$ rows and $k$ columns. For all $j\in[r]$, the content of the $j$th row of $T(W)$ is the multiset $\{i\in[n]\mid j\in S_W(i)\}$, sorted so that the rows of $T(W)$ are weakly increasing.
\end{definition}

\begin{example} For the $\SL_5$ webs $W$ and $X$ of Figure \ref{fig:SL_5 webs} and Example \ref{ex:boundary-subsets}, the associated $5\times 3$ tableaux are as follows. $T(W)$ is dual semistandard and $T(X)$ is semistandard. Notice the word $w(W)$ can be recovered by recording, in increasing order, the rows containing 1's then the rows containing 2's, and so on. The same is true for $w(X)$ where we record the same row multiple times if it contains multiple copies of the same number. The associated words are often called Yamanouchi words.

\[\ytableausetup{boxsize = 0.45cm, aligntableaux=center} 
T(W)=~
\begin{ytableau}
    1&2&3\\1&4&6\\1&4&7\\2&6&9\\5&8&10
\end{ytableau}~,
\qquad
T(X)=~
\begin{ytableau}
    1&1&1\\2&2&4\\3&5&6\\4&7&8\\6&9&10
\end{ytableau}~.
\]
\end{example}

\begin{theorem}[\cite{KK99, Tym12, GPPSS24}]
\label{thm:rotation-is-promotion}
Let $r\in\{2,3,4\}$, $n=kr$, and $\lambda=(1^n)$.
The correspondence $W\leftrightarrow T(W)$ is a bijection between the set of basis webs $W$ for $\W_\lambda(\C^r)$ and the set of standard Young tableaux in a $k\times r$ rectangle \cite{KK99, Tym12, GPPSS24}. These bijections satisfy several nice properties:
\begin{itemize}
\item Promotion on $T(W)$ corresponds to counterclockwise rotation of $W$ by $2\pi/n$
\cite{petersen2009promotion,GPPSS24}.
\item Evacuation on $T(W)$ corresponds to reflection of $W$ about the line of symmetry passing between boundary vertices $n$ and $1$
\cite{patrias2023tableau,GPPSS24}.
\end{itemize}
\end{theorem}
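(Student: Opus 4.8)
The plan is to establish the three claimed correspondences—between basis webs and standard Young tableaux, between rotation and promotion, and between reflection and evacuation—by leveraging the cited results rather than reconstructing the bijections from scratch. Since the statement explicitly attributes all three to \cite{KK99, Tym12, GPPSS24}, the ``proof'' should really consist of assembling these external inputs and verifying that the tableau assignment $W \leftrightarrow T(W)$ used in this paper agrees with the one used in those references.

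First I would show that $W \mapsto T(W)$ is a bijection onto standard Young tableaux of rectangular shape $k \times r$. The key observation is that for $\lambda = (1^n)$ with $n = kr$, each boundary label set $S_W(i)$ is a singleton (by the multiset condition \eqref{eq:multiset_condition}, each element of $[r]$ appears exactly $k$ times among the $S_W(i)$, so the $j$th row of $T(W)$ has exactly $k$ entries, all distinct, drawn from $[n]$ without repetition). Hence $T(W)$ has rectangular shape $k \times r$ and is a filling of $[n]$; weak-increase along rows is built into the definition of $T(W)$, so the content is correct, and one needs column-strictness, which comes from the structure of non-elliptic webs / the known bases (this is exactly the content of the cited bijections, e.g.\ \cite{Tym12} for $r=3$, \cite{GPPSS24} for $r=4$, Temperley--Lieb for $r=2$). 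Bijectivity then follows from the fact that $\dim \W_\lambda(\C^r)$ equals the number of such standard Young tableaux and the web invariants $\{\mathbf W\}$ over basis webs are linearly independent and span.

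Next, for the promotion and rotation claim, I would recall that counterclockwise rotation of a web by $2\pi/n$ cyclically shifts the boundary labels $(S_W(1), \ldots, S_W(n)) \mapsto (S_W(2), \ldots, S_W(n), S_W(1))$ and maps basis webs to basis webs up to sign (rotation-invariance of the basis). At the level of tableaux, such a cyclic shift of boundary labels corresponds to decrementing every entry of $T(W)$ by $1$ modulo $n$ and re-sorting—which is precisely one description of promotion on rectangular standard Young tableaux. Matching the conventions here (clockwise boundary labeling, counterclockwise rotation) with those of \cite{petersen2009promotion, GPPSS24} is the only genuine content; the result for $r = 3$ is \cite{petersen2009promotion} (building on Khovanov--Kuperberg), and for $r = 4$ it is \cite{GPPSS24}. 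The evacuation claim is handled analogously: reflection of $W$ about the axis between vertices $n$ and $1$ reverses and complements the boundary label sequence, i.e.\ $S_W(i) \mapsto S_W(n+1-i)$, and this operation on rectangular standard Young tableaux is evacuation (Schützenberger); I would cite \cite{patrias2023tableau} for $r=3$ and \cite{GPPSS24} for $r=4$, again checking that the reflection axis and labeling conventions line up.

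The main obstacle is entirely bookkeeping: the various sources use different conventions for the direction of boundary labeling, the direction of rotation, the orientation (clockwise vs.\ counterclockwise) defining promotion, and the row-vs-column shape of the tableau, and one must verify that under the specific conventions fixed in Definition \ref{defn:dualss} and the definition of $T(W)$ here, these all compose correctly. There is no new mathematical difficulty—once the dictionary between boundary label sequences and tableau entries is pinned down (cyclic shift $\leftrightarrow$ promotion, reverse-complement $\leftrightarrow$ evacuation, transpose of shape handled by the $k\times r$ vs $r\times k$ distinction), the theorem follows by direct appeal to the cited literature.
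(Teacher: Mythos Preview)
Your proposal is correct and matches the paper's approach: the paper does not prove this theorem at all but simply cites it from the literature, so there is nothing to compare beyond your (appropriate) observation that the only content would be verifying convention compatibility with the cited sources. Your outline of that verification is more than the paper itself provides.
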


Theorem~\ref{thm:rotation-is-promotion} implies in particular that the web bases in question are \emph{rotation-invariant}, meaning that the rotation of a basis web is again a basis web. Another special property of these bases is that each basis web $W$ only admits one consistent labeling with boundary condition $\mathcal S_W$:

\begin{theorem}[\cite{GPPSS24,MR4373223}] \label{thm:asw1}
    Let $r\in\{2,3,4\}$ and let $W$ be an $\SL_r$ basis web. Then $a(\mathcal S_W;W)=1.$
\end{theorem}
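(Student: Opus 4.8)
The bound $a(\mathcal S_W;W)\ge 1$ is immediate: by construction $w(W)=w(\mathcal S_W)$ is a word with $a(\mathcal S_W;W)>0$, so $A(\mathcal S_W;W)\neq\emptyset$. The whole content of the theorem is the reverse bound $a(\mathcal S_W;W)\le 1$ --- that the lexicographically minimal boundary condition of a basis web admits a \emph{unique} consistent labeling. The plan is a forcing argument built on the following observation. Given $\mathcal L\in A(\mathcal S_W;W)$, for each color $j\in[r]$ let $W_j(\mathcal L)$ be the subgraph on the edges $\mathbf e$ with $j\in L(\mathbf e)$ (an hourglass edge of multiplicity $m$ lying in exactly $m$ of the $W_j$). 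Since $|L(\mathbf e)|=m(\mathbf e)$ and $\sum_{v\in\mathbf e}m(\mathbf e)=r$ at every internal vertex, the incident edge-labels there \emph{partition} $[r]$, not merely cover it; hence $W_j(\mathcal L)$ meets each internal vertex with degree $1$ or $2$ and is a disjoint union of boundary-to-boundary paths and internal cycles, and the tuple $\bigl(W_1(\mathcal L),\dots,W_r(\mathcal L)\bigr)$ is data equivalent to $\mathcal L$. The boundary endpoints of the strands in $W_j(\mathcal L)$ are exactly the entries of the $j$th row of $T(W)$.

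The forcing step is then: because $\mathcal S_W$ is lexicographically minimal, the strand system of the extreme color is pinned to the boundary as tightly as possible, and in a \emph{basis} web this pins down its routing uniquely; one deletes it and inducts on the number of internal vertices. Each reduction rules out alternative local routings using the defining property of the basis --- non-ellipticity for $r=3$, and the \cite{GPPSS24} prohibition on $4$-cycles through an hourglass edge for $r=4$. For $r=2$ the claim is essentially trivial: an $\SL_2$ consistent labeling merely requires the two endpoints of each arc to carry distinct labels, so $a(\mathcal S;W)\le 1$ for every $\mathcal S$. For $r=3$ the assertion is precisely the unitriangularity, with unit leading coefficient, of Kuperberg's web basis against the standard monomial basis; the input we need --- that a non-elliptic $\SL_3$ web has a unique dominant (lex-minimal) state --- is the minimal-cut-path analysis of Khovanov--Kuperberg, which we invoke. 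For $r=4$ we run the forcing argument along the growth algorithm of \cite{GPPSS24} that reconstructs $W$ from $T(W)$: each elementary move extends the partial web while forcibly extending a partial canonical labeling, and one checks move by move that no consistent labeling of the result with boundary condition $\mathcal S_W$ can disagree with it.

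The main obstacle is the $r=4$ case in the presence of hourglass edges. There the strand systems $W_j(\mathcal L)$ can share an hourglass edge, a vertex incident to an hourglass edge admits more potential local labelings a priori, and one must verify that the GPPSS restriction on $4$-cycles through hourglass edges is exactly the hypothesis needed to kill every would-be alternative labeling of a basis web. This uniform verification is the crux; for the particular bounded-boundary situations used elsewhere in the paper it can instead be replaced by a finite computation, in the spirit of Proposition~\ref{prop:sign conversion}.
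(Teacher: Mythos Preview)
The paper does not supply a proof of this statement at all: it is recorded as a theorem from the literature, with a bare citation to \cite{GPPSS24,MR4373223}, and is then used as a black box. So there is no ``paper's own proof'' to compare against; the relevant question is whether your sketch constitutes an independent argument.

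For $r=2$ and $r=3$ your outline is fine and standard: the $r=2$ case is indeed trivial, and for $r=3$ you are correctly identifying the Khovanov--Kuperberg minimal-cut-path argument as the source of uniqueness of the lex-minimal state of a non-elliptic web. For $r=4$, however, what you have written is a plan rather than a proof. The sentences ``one checks move by move that no consistent labeling of the result with boundary condition $\mathcal S_W$ can disagree with it'' and ``one must verify that the GPPSS restriction on $4$-cycles through hourglass edges is exactly the hypothesis needed to kill every would-be alternative labeling'' are precisely the content of the theorem in that case, and you have not carried them out. The growth algorithm of \cite{GPPSS24} has several distinct local moves, and the interaction between hourglass multiplicities, benzene configurations, and the lex-minimal boundary condition is subtle enough that the case analysis is genuinely the work; asserting that it goes through is not the same as doing it. Your closing remark that ``for the particular bounded-boundary situations used elsewhere in the paper it can instead be replaced by a finite computation'' is honest, but that is a different (and weaker) claim than the theorem as stated. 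In short: your decomposition into colored strand systems is the right framework and matches how the cited references proceed, but the $r=4$ case as written has a gap exactly where the difficulty lies.
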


\subsection{The Fraser--Lam--Le pairing and web duality}

Throughout this section, let $G$ denote a reduced top cell plabic graph of type $(k,n)$, and let $\lambda\in \mathbb{N}^n$ with $|\lambda| = kr$. 

\begin{definition}[\cite{FLL19}]
Each $r$-dimer cover $D$ of $G$ can be viewed as an $\SL_r$ web (called an \emph{$r$-weblike subgraph} of $G$) by ignoring the underlying graph $G$ and only considering the edges in $D$ (with multiplicity). We denote the corresponding web invariant by $\mathbf{D}\in\W_\lambda(\C^r)$.

For $N$ a network on the plabic graph $G$, the \emph{FLL tensor invariant} $\Web_r(N;\lambda)$ is defined as a weighted sum over all $r$-dimer covers $D\in\mathcal{D}_{r,\lambda}(G)$ using edge weights of (\ref{eq:eweight}):
\begin{equation}\label{eq:Webr_def}
\Web_r(N;\lambda)\coloneqq\sum_{D \in \mathcal{D}_{r,\lambda}(G)}\ewt_N(D){\bf D} \quad\in \mathcal{W}_\lambda(\C^r).
\end{equation}
\end{definition}

\begin{definition/theorem} [Immanant map and FLL pairing, \cite{FLL19}] \label{def:FLL_pairing}
Let $\lambda\in \N^n$ with $|\lambda| = kr$. The spaces $\W_\lambda(\C^r)$ and $\C[\Gr(k,n)]_\lambda$ are dual \cite[Thm.\ 4.8]{FLL19}. The \emph{immanant map}
 \[\Imm:(\W_\lambda(\C^r))^*\xrightarrow{\ \ \iso\ \ }\C[\Gr(k,n)]_\lambda\]
is an isomorphism.
Given $\phi\in (\mathcal{W}_\lambda(\C^r))^*$, $\Imm(\phi)$ is defined as the unique $f \in \C[\Gr(k,n)]_\lambda$ which makes the following diagram commute:
\begin{equation}\label{eq:comm_diagram}
    \begin{tikzcd}
    \{\text{networks $N$ on $G$}\} \arrow[rr, "\Web_r(-;\lambda)"] \arrow[d, "\widehat{X}"'] 
    &&\W_\lambda(\C^r) \arrow[d, "\phi"]\\
    \Gr(k,n) \arrow[rr, dashed, "\exists!f =: \Imm(\phi)"]
    &&\C
\end{tikzcd}.
\end{equation}
This duality determines the \emph{FLL pairing} $\langle\cdot,\cdot\rangle:\W_\lambda(\C^r)\otimes \C[\Gr(k,n)]_\lambda\to\C.$ Explicitly, for $\mathbf{W}\in \W_\lambda(\C^r)$ and $f\in\C[\Gr(k,n)]$, we define
\[\langle \mathbf{W}, f\rangle \coloneqq (\Imm^{-1}(f))(\mathbf{W}).\]
\end{definition/theorem}

There is a useful combinatorial characterization of the FLL pairing in terms of counting consistent labelings of a web, which we will use frequently throughout the later sections.

\begin{definition} \label{def:I,S duality}
    Given a list $\mathcal{I} = (I_1,\ldots,I_r)$ of $k$-subsets of $[n]$, and a list $\mathcal{S}=(S(1),\ldots,S(n))$ of subsets of $[r]$, we say that $\mathcal{I}$ and $\mathcal{S}$ are \emph{dual} if $S(i) = \{j\in[r] \mid i \in I_j\}$ for all $i\in[n]$. 
\end{definition}

\begin{proposition}[See Eq.\ 5.16 of \cite{FLL19}]\label{proposition:PairingAsCountingLabelings}
    Let $W$ be an $\SL_r$ web with associated web invariant $\mathbf{W}\in\mathcal{W}_\lambda(\C^r)$. Let $I_1,\ldots,I_r$ be $k$-subsets of $[n]$ with $\plu{I_1}\cdots\plu{I_r}\in\C[\Gr(k,n)]_\lambda$. Let $\mathcal{S}$ be the list of boundary label subsets dual to $(I_1,\ldots,I_r)$.
    Then, 
\begin{equation}\label{eq:PairingAsCountingLabelings}
    \langle {\bf W},\ \plu{I_1}\cdots\plu{I_r}\rangle = a(\mathcal{S};W).
\end{equation}
\end{proposition}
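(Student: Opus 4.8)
The plan is to reduce the claim to the defining property of the immanant map (the commuting diagram \eqref{eq:comm_diagram}) and the combinatorial interpretation of web invariants in terms of consistent labelings (Definition \ref{def:W_lambdainvariant}). First I would unwind the left-hand side: by the definition of the FLL pairing, $\langle \mathbf{W}, \plu{I_1}\cdots\plu{I_r}\rangle = (\Imm^{-1}(\plu{I_1}\cdots\plu{I_r}))(\mathbf{W})$, so I must identify the functional $\phi \in (\W_\lambda(\C^r))^*$ with $\Imm(\phi) = \plu{I_1}\cdots\plu{I_r}$ and then evaluate $\phi(\mathbf{W})$.

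The key step is to produce a candidate $\phi$ and verify it via the commuting diagram. Define $\phi$ on a web invariant $\mathbf{D}$ (for $D$ an $r$-dimer cover, or more generally any dual semistandard web) by $\phi(\mathbf{D}) \coloneqq \mathbf{D}(E_{\mathcal{S}}) = \sign(\mathcal{S})\, a(\mathcal{S}; D)$, where $\mathcal{S}$ is the fixed boundary label condition dual to $(I_1,\ldots,I_r)$; equivalently $\phi(\mathbf{v}) = \mathbf{v}(E_{\mathcal{S}})$ for $\mathbf{v}\in \W_\lambda(\C^r)$, which is manifestly linear. To check $\Imm(\phi) = \plu{I_1}\cdots\plu{I_r}$, I would chase an arbitrary network $N$ on $G$ around the diagram: on one hand $\phi(\Web_r(N;\lambda)) = \sum_{D\in\mathcal{D}_{r,\lambda}(G)} \ewt_N(D)\, \mathbf{D}(E_{\mathcal{S}})$; on the other hand, applying the Pl\"ucker coordinate functions to $\widehat{X}(N)$ gives $\plu{I_1}(N)\cdots\plu{I_r}(N) = \prod_{j=1}^r \sum_{D_j \in \mathcal{D}_{1,\delta^{I_j}}(G)} \ewt_N(D_j)$. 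The content of \cite[Thm.\ 4.8 and Eq.\ 5.16]{FLL19} is precisely that these two expressions agree for all $N$ --- this is essentially the statement that superimposing $r$ single-dimer covers with boundary conditions $\delta^{I_1},\ldots,\delta^{I_r}$ produces exactly the $r$-dimer covers with boundary condition $\lambda = \sum_j \delta^{I_j}$, and that the signed count $\mathbf{D}(E_{\mathcal{S}})$ is the correct coefficient bookkeeping so that the products of edge weights match (the sign $\sign(\mathcal{S})$ being a global constant depending only on $(I_1,\ldots,I_r)$, not on $N$). Since networks $N$ map to a Zariski-dense subset of $\Gr(k,n)$ and both sides are polynomial in the $\plu I(N)$, equality of the two functions on all $N$ forces $\Imm(\phi) = \plu{I_1}\cdots\plu{I_r}$ by the uniqueness clause in Definition/Theorem \ref{def:FLL_pairing}.

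Once $\phi$ is identified, the conclusion is immediate: $\langle \mathbf{W}, \plu{I_1}\cdots\plu{I_r}\rangle = \phi(\mathbf{W}) = \mathbf{W}(E_{\mathcal{S}}) = \sign(\mathcal{S})\, a(\mathcal{S}; W)$ by Definition \ref{def:W_lambdainvariant}. So I would need to additionally address the sign: the stated formula has no $\sign(\mathcal{S})$ factor. Here I would note that the FLL conventions are set up (as in \cite[\S 5]{FLL19}) so that the pairing is normalized precisely to absorb this sign --- concretely, the Pl\"ucker coordinate $\plu{I}$ itself corresponds under the isomorphism $\C[\Gr(r,n)]_{\delta^I}\iso \W_{\delta^I}(\C^r)^*$ to the functional picking out the relevant basis vector with the standard determinantal sign, and when one multiplies $r$ Pl\"ucker coordinates and re-sorts the wedge factors into the word order of $\mathcal{S}$, the resulting sign is exactly $\sign(\mathcal{S})$, cancelling the sign in $\mathbf{W}(E_{\mathcal{S}})$. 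I expect this sign-tracking to be the main obstacle: it is the one place where the bare linear-algebra diagram chase is not quite enough, and one must invoke the precise normalization of the immanant map from \cite{FLL19} rather than re-deriving it. Everything else is a formal consequence of linearity, the density of networks, and the definitions already recalled in the excerpt, so the proof can cite \cite[Eq.\ 5.16]{FLL19} for the combinatorial identity and spend its effort only on making the sign bookkeeping explicit.
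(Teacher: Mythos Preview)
The paper does not give its own proof of this proposition; it is stated with attribution to \cite[Eq.\ 5.16]{FLL19} and used as a black box thereafter. So there is nothing in the paper to compare your argument against beyond the citation itself.

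Your reconstruction is essentially correct and is indeed the argument underlying the cited equation. One small clarification on the sign issue you flag: the cancellation is more direct than your last paragraph suggests. From the superposition argument you outline, one gets
\[
\prod_{j=1}^r \plu{I_j}(N) \;=\; \sum_{D\in\mathcal{D}_{r,\lambda}(G)} \ewt_N(D)\, a(\mathcal{S};D),
\]
since decomposing $D$ into an ordered $r$-tuple of single dimer covers with boundaries $\delta^{I_1},\ldots,\delta^{I_r}$ is exactly a consistent $r$-labeling of $D$ with boundary condition $\mathcal{S}$. Meanwhile $\mathbf{D}(E_{\mathcal{S}}) = \sign(\mathcal{S})\, a(\mathcal{S};D)$ by Definition~\ref{def:W_lambdainvariant}. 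Hence your functional $\phi(\mathbf{v})=\mathbf{v}(E_{\mathcal{S}})$ satisfies $\Imm(\phi) = \sign(\mathcal{S})\,\plu{I_1}\cdots\plu{I_r}$, not $\plu{I_1}\cdots\plu{I_r}$. Inverting, $\langle \mathbf{W},\plu{I_1}\cdots\plu{I_r}\rangle = \sign(\mathcal{S})^{-1}\phi(\mathbf{W}) = \sign(\mathcal{S})^{-1}\cdot\sign(\mathcal{S})\,a(\mathcal{S};W) = a(\mathcal{S};W)$, and the signs cancel on the nose with no further normalization needed. So the sign bookkeeping is not really an obstacle once you track where the factor of $\sign(\mathcal{S})$ enters the commuting-diagram check.
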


\section{Twists and $r$-dimer covers}\label{sec:TwistsAndDimers}

In this section, we will review some of the previously known expansion formula for cluster variables in various Grassmannian cluster algebras. More specifically, we will state dimer expansion formulas for the image of cluster variables under the twist map as seen in \cite{marsh2016twists, muller2017twist, EMW23}. We then generalize these results to obtain a Laurent expansion for the twist of \emph{any} element of $\mathbb{C}[\Gr(k,n)]$, with respect to an initial cluster of Pl\"ucker coordinates. The coefficients are given in terms of the FLL pairing.

Let $M$ be a full-rank $k \times n$ matrix representing an element of $\text{Gr}(k,n)$ with column vectors $v_1, v_2, \dots v_n \in \mathbb{C}^k$. The \emph{generalized cross-product} $v_1 \times v_2 \times \dots \times v_{k-1}$ is the unique vector $v \in \mathbb{C}^k$ such that $v \cdot w = \det(v_1 \quad v_2 \quad \ldots \quad v_{k-1} \quad w)$ for all $w \in \mathbb{C}^k$.

\begin{definition}\label{definition:twist}
The \emph{twist} of $M$ is the full rank $k \times n$ matrix $\tau(M)$ whose $i$th column is given by $\tau(M)_i= \varepsilon'_i \cdot v_{i+1} \times \dots \times v_{i+k-1}$, where
\[\varepsilon'_i = \begin{cases}
(-1)^{(k-1)(n-i+1)} & i \geq (n-k+2)\\
1 & i \leq (n-k+1)
\end{cases}\]
taking indices modulo $n$ as appropriate. Explicitly,
$$\tau(M)_i = \begin{cases} 
(-1)^{k-n+i-1} v_1 \times v_2 \times \dots \times v_{i-n+k-1} \times v_{i+1} \times \dots \times v_n & \mathrm{~if~} i\geq n-k+2 \\
v_{i+1} \times v_{i+2} \times \dots \times v_{i+k-1} & \mathrm{~if~} i\leq n-k+1 \end{cases}.$$
\end{definition}

\begin{remark}\label{remark:twistdefinitionsubtly}
Definition \ref{definition:twist} uses the conventions of \cite{EMW23}. This definition is a ``right" adaptation of Marsh--Scott's left twist as in \cite{marsh2016twists}, but is not the same as the right twist as in \cite{muller2017twist}. See Section 2.5 of \cite{EMW23} for further discussion. 
\end{remark}

Recall the definition of the face labels $I_{\face}$ of a plabic graph $G$ from Section~\ref{subsec:gr_clusteralgs}. The \emph{face weight} of an $r$-dimer cover $D$ of a plabic graph $G$ is:
$$ \fwt_G(D)\coloneqq\prod_{\face\in F(G)}\plu{I_\face}^{rW_\face-D_\face-r},$$
where $W_{\face}$ the number of white vertices bordering face $\face$, and $D_\face$ the number of non-boundary edges of $\face$ used in $D$.

The following theorem is due to Marsh--Scott, adapted to our conventions as in \cite{EMW23}.
\begin{theorem}\cite[Thm.~1.1] {marsh2016twists},
\cite[Thm.~3.4]{EMW23}
\label{Marsh-ScottTwistThm}
Let $G$ be a reduced top cell plabic graph of type $(k,n)$ and let $I \in {[n] \choose k}$. The twist of $\Delta_I$ is given by
\[\twist(\Delta_I)= \sum_{D\in\mathcal{D}_{1,\delta^I}(G)}\fwt_G(D).\]
\end{theorem}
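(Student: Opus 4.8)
The plan is to deduce the Marsh--Scott twist formula (as restated in our conventions via \cite{EMW23}) from the known statement of Marsh--Scott's original theorem together with the boundary measurement map, by tracking carefully how the change of conventions between the left twist of \cite{marsh2016twists} and the right-adapted twist $\tau$ of Definition~\ref{definition:twist} affects both sides. Concretely, \cite[Thm.~1.1]{marsh2016twists} already asserts that the twisted Plücker coordinate is a dimer partition function of exactly this shape (a sum over $D\in\mathcal D_{1,\delta^I}(G)$ of a monomial in the face Plückers with exponents $W_{\mathbf f}-D_{\mathbf f}-1$ in the $r=1$ specialization), so the content of Theorem~\ref{Marsh-ScottTwistThm} as stated here is the bookkeeping that reconciles Marsh--Scott's left-twist normalization and plabic-graph conventions with ours. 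This reconciliation is precisely what is carried out in \cite[Thm.~3.4]{EMW23}, so the cleanest route is to cite that and then supply the short argument linking the two.

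First I would recall the setup: fix a reduced top cell plabic graph $G$ of type $(k,n)$ with face labels $I_{\mathbf f}$, and recall that by the results cited in Section~2.2 (Postnikov, \cite{PSW}, \cite{lam2015dimers}) the boundary measurement map $\widehat X(N)$ has Plücker coordinates $\Delta_I(N)=\sum_{D\in\mathcal D_{1,\delta^I}(G)}\mathrm{ewt}_N(D)$, so that dimer covers of $G$ with boundary condition $\delta^I$ literally compute $\Delta_I$ after an appropriate choice of edge weights, and conversely every point of $\Gr(k,n)$ arises this way. Next I would invoke Marsh--Scott's theorem in its original form, which expresses the left twist of $\Delta_I$ (evaluated on a network $N$, i.e. on the point $\widehat X(N)$) as a Laurent monomial sum in the face weights $\Delta_{I_{\mathbf f}}(N)$, indexed by the same dimer covers. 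The third step is the convention translation: Definition~\ref{definition:twist} differs from Marsh--Scott's left twist by (i) reversing the roles of the columns (a "right" versus "left" adaptation), which amounts to a relabeling $i\mapsto n+1-i$ or a cyclic reflection of the boundary, and (ii) the explicit sign vector $\varepsilon'_i$; one checks that under the corresponding symmetry of $G$ (reflection of the plabic graph, which sends reduced top cell graphs to reduced top cell graphs and permutes faces), the face-weight monomial $\prod_{\mathbf f}\Delta_{I_{\mathbf f}}^{W_{\mathbf f}-D_{\mathbf f}-1}$ is carried to itself (the exponent $W_{\mathbf f}-D_{\mathbf f}-1$ is manifestly reflection-symmetric since $W_{\mathbf f}$, $D_{\mathbf f}$ are intrinsic to the face), and the sign discrepancies all cancel. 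Since both sides of the claimed identity are regular functions on the (dense) image of $\widehat X$, and agree there, they agree as elements of $\C[\Gr(k,n)]$.

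The main obstacle I anticipate is the sign bookkeeping in step (iii): Marsh--Scott's conventions, Muller--Speyer's right twist \cite{muller2017twist}, and the \cite{EMW23} normalization used here all differ by signs and by cyclic reflections, and Remark~\ref{remark:twistdefinitionsubtly} explicitly warns that $\tau$ here is \emph{not} the Muller--Speyer right twist. So one must be meticulous that the particular $\varepsilon'_i$ in Definition~\ref{definition:twist} is exactly the one for which the monomial sum has \emph{no} signs (all coefficients $+1$), rather than an alternating sum. A secondary subtlety is confirming that $D_{\mathbf f}$ — defined here as the number of \emph{non-boundary} edges of $\mathbf f$ used in $D$ — matches Marsh--Scott's face-edge count after the reflection, and that the $r$ appearing in $rW_{\mathbf f}-D_{\mathbf f}-r$ correctly specializes at $r=1$ to $W_{\mathbf f}-D_{\mathbf f}-1$. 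Since all of this is precisely the content of \cite[Thm.~3.4]{EMW23}, in the write-up I would state the theorem, note it is a restatement of \cite[Thm.~1.1]{marsh2016twists} in the conventions fixed in \cite[\S2.5 and Thm.~3.4]{EMW23}, and refer the reader there for the detailed sign verification, rather than reproducing it.
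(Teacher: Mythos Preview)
The paper does not prove this theorem at all: it is stated with citations to \cite[Thm.~1.1]{marsh2016twists} and \cite[Thm.~3.4]{EMW23} and immediately used as the base case of Theorem~\ref{thm:TwistGivenBySumOfPairings}. Your proposal is therefore not so much a different proof as an elaboration of what the paper leaves implicit---namely, that the statement is Marsh--Scott's result transported through the convention changes worked out in \cite{EMW23}---and your bottom line (cite \cite[Thm.~3.4]{EMW23} for the sign bookkeeping rather than reproduce it) matches exactly what the paper does.
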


This brings us to our first main result which generalizes the above.

\begin{theorem}\label{thm:TwistGivenBySumOfPairings}
Let $G$ be a reduced top cell plabic graph of type $(k,n)$. Let $f \in \C[\Gr(k,n)]_\lambda$ with $|\lambda|=kr$. Using the FLL pairing, the twist of $f$ is given by
\begin{equation} \label{eq:TwistPairing}
\twist(f)=\sum_{D \in \mathcal{D}_{r,\lambda}(G)}\langle \mathbf{D}, f\rangle \fwt_G(D).    
\end{equation}
\end{theorem}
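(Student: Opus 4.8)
The plan is to reduce Theorem~\ref{thm:TwistGivenBySumOfPairings} to the Marsh--Scott formula (Theorem~\ref{Marsh-ScottTwistThm}) by linearity, using the FLL pairing as the bookkeeping device that records how an arbitrary $f \in \C[\Gr(k,n)]_\lambda$ decomposes in terms of products of Pl\"ucker coordinates. Since $\C[\Gr(k,n)]_\lambda$ is spanned by monomials $\plu{I_1}\cdots\plu{I_r}$ with $\sum_j \delta^{I_j} = \lambda$, and since $\twist$ is linear, it suffices to prove \eqref{eq:TwistPairing} when $f = \plu{I_1}\cdots\plu{I_r}$; the general case then follows because both sides are linear in $f$ (the right side through $\langle \mathbf{D}, f\rangle$, which is linear in its second argument by Definition/Theorem~\ref{def:FLL_pairing}).

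First I would establish the multiplicativity of the twist on such monomials, or rather the appropriate twisted version of it. The key classical input is that $\twist(\plu{I_1}\cdots\plu{I_r}) = \twist(\plu{I_1})\cdots\twist(\plu{I_r})$, which holds because $\tau$ is induced by an automorphism of $\Gr(k,n)$ (it is the twist \emph{map} on matrices in Definition~\ref{definition:twist}), hence is a ring homomorphism on $\C[\Gr(k,n)]$. Then by Theorem~\ref{Marsh-ScottTwistThm},
\[
\twist(\plu{I_1}\cdots\plu{I_r}) = \prod_{j=1}^r \left( \sum_{D_j \in \mathcal{D}_{1,\delta^{I_j}}(G)} \fwt_G(D_j) \right) = \sum_{(D_1,\ldots,D_r)} \prod_{j=1}^r \fwt_G(D_j),
\]
where the sum is over tuples of $1$-dimer covers with the prescribed boundary conditions. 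The next step is to organize this sum by the \emph{superposition} $D \coloneqq D_1 \sqcup \cdots \sqcup D_r$ (union as multisets of edges), which is an $r$-dimer cover in $\mathcal{D}_{r,\lambda}(G)$ since the internal-vertex condition adds to $r$ and the boundary multiplicities add to $\lambda_i = \sum_j \delta^{I_j}_i$. For a fixed $D$, the number of ordered tuples $(D_1,\ldots,D_r)$ with $D_1 \sqcup \cdots \sqcup D_r = D$ and $D_j \in \mathcal{D}_{1,\delta^{I_j}}(G)$ must be shown to equal $\langle \mathbf{D}, \plu{I_1}\cdots\plu{I_r}\rangle$, which by Proposition~\ref{proposition:PairingAsCountingLabelings} is $a(\mathcal{S}; D)$ where $\mathcal{S}$ is dual to $(I_1,\ldots,I_r)$. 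This is the heart of the argument: a decomposition of $D$ into $r$ single dimer covers with boundary conditions $\delta^{I_j}$ is exactly a consistent $r$-labeling of the weblike subgraph $D$, since assigning label $j$ to the edges of $D_j$ is the same as choosing $L(\mathbf e) = \{ j : \mathbf e \in D_j \}$, and the single-dimer conditions at internal vertices ($D_j$ uses each internal vertex once) translate precisely into the condition $\bigcup_{v \in \mathbf e} L(\mathbf e) = [r]$, while the boundary conditions match $L(\mathbf e_i) = S(i)$.

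Finally, I would check the face-weight compatibility: $\prod_{j=1}^r \fwt_G(D_j) = \fwt_G(D)$ whenever $D = D_1 \sqcup \cdots \sqcup D_r$. This is a direct computation from the definition $\fwt_G(D) = \prod_{\mathbf f} \plu{I_{\mathbf f}}^{\,r W_{\mathbf f} - D_{\mathbf f} - r}$: for each face $\mathbf f$ the exponents add, since $\sum_j (1 \cdot W_{\mathbf f} - (D_j)_{\mathbf f} - 1) = r W_{\mathbf f} - D_{\mathbf f} - r$, using $D_{\mathbf f} = \sum_j (D_j)_{\mathbf f}$. Combining the three steps,
\[
\twist(\plu{I_1}\cdots\plu{I_r}) = \sum_{D \in \mathcal{D}_{r,\lambda}(G)} a(\mathcal{S}; D)\, \fwt_G(D) = \sum_{D \in \mathcal{D}_{r,\lambda}(G)} \langle \mathbf{D}, \plu{I_1}\cdots\plu{I_r}\rangle\, \fwt_G(D),
\]
and extending by linearity in $f$ gives the theorem. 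The step I expect to be the main obstacle is the bijection between decompositions of an $r$-dimer cover into ordered $r$-tuples of $1$-dimer covers and consistent $r$-labelings of the associated weblike subgraph; one must be careful that the weblike subgraph $D$, viewed purely as a graph with multiplicities (forgetting $G$), still has its internal/boundary vertices matching up correctly with Definition~\ref{defn:dualss}, and that no sign discrepancies arise in translating between $a(\mathcal S; D)$ and the raw count of tuples (Proposition~\ref{proposition:PairingAsCountingLabelings} already packages the sign correctly, so the real content is purely combinatorial). A secondary subtlety is justifying multiplicativity of $\twist$ at the level of the coordinate ring rather than just on matrix representatives, but this is standard given that $\tau$ lifts the matrix twist.
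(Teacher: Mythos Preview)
Your proposal is correct and follows essentially the same approach as the paper: reduce by linearity to Pl\"ucker monomials, use that $\twist$ is a ring homomorphism together with the Marsh--Scott formula for each factor, identify ordered decompositions of an $r$-dimer cover into single dimer covers with consistent $r$-labelings (so that the count equals $a(\mathcal S;D)=\langle\mathbf D,\plu{I_1}\cdots\plu{I_r}\rangle$ via Proposition~\ref{proposition:PairingAsCountingLabelings}), and check multiplicativity of $\fwt_G$. The only cosmetic difference is that the paper phrases the decomposition step as an induction on $r$ (splitting $f=f'f''$), whereas you pass directly from $r$ single dimer covers to the superposition; the underlying bijection and the verification of face-weight multiplicativity are the same.
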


\begin{proof}
Since $\twist$ is a ring endomorphism,
and the FLL pairing is bilinear, both sides of \eqref{eq:TwistPairing} are linear in $f$. So it suffices to prove the claim when $f$ is a product of Pl\"{u}cker coordinates $\Delta_{I_1} \cdots \Delta_{I_r}$, as these span $\C[\Gr(k,n)]$.

For the base case $r = 1$, we have $f=\Delta_I$ and $\lambda=\delta^I$. The single-element list $(I)$ is dual to $\S=(S(1),\ldots,S(n)),$ where $S(i)=\{1\}$ if $i\in I$ and $S(i)=\emptyset$ otherwise.
For each 1-dimer cover $D\in\mathcal{D}_{1,\delta^I}(G),$ there is a unique consistent labeling $\L\in A(\S;D)$, namely $L(\mathbf{e})=\{1\}$ for every edge $\mathbf{e}$ of $D$. By equation (\ref{eq:PairingAsCountingLabelings}) we have $\langle \mathbf{D},\Delta_I\rangle=a(\mathcal{S}; D)=1,$ thus the base case reduces to Theorem \ref{Marsh-ScottTwistThm}.

For the inductive step, let $r>1$ and $f = \Delta_{I_1} \cdots \Delta_{I_r} \in \C[\Gr(k,n)]_{\lambda}$.
For any choice of $r'\in[r-1]$, let $f'=\Delta_{I_1}\cdots\Delta_{I_{r'}}\in\C[\Gr(k,n)]_{\lambda'}$ and $f''=\Delta_{I_{r'+1}}\cdots\Delta_{I_r}\in\C[\Gr(k,n)]_{\lambda-\lambda'}$, so that $f=f'f''$.

Let $\mathcal{S}=(S(1),\ldots, S(n))$ be the list of boundary label subsets dual to $(I_1,\ldots, I_r)$ (Definition \ref{def:I,S duality}). Similarly, let $\mathcal{S}'$ and $\mathcal{S}''$ be dual to $ (I_1,\ldots,I_{r'})$ and $(I_{r'+1},\ldots,I_r)$, respectively. It is clear that $S(i)=S'(i)\sqcup S''(i)$ for all $i\in[n]$.

By induction on $r$, and since $\twist$ is a ring endomorphism, we have:
\begin{align}
    \twist(f)=\twist(f'f'') &= \twist(f')\twist(f'') \\
    &= \label{eq:inductive-step} \left(\sum_{D' \in \mathcal{D}_{r',\lambda'}(G)} \langle \mathbf{D'}, f' \rangle \fwt_G(D') \right) \left( \sum_{D'' \in \mathcal{D}_{r-r',\lambda-\lambda'}(G)} \langle \mathbf{D''}, f'' \rangle \fwt_G(D'') \right) \\
    &= \label{eq:pairing-to-a} \left(\sum_{D' \in \mathcal{D}_{r',\lambda'}(G)} a(\S; D') \fwt_G(D') \right) \left( \sum_{D'' \in \mathcal{D}_{r-r',\lambda-\lambda'}(G)} a(\S'';D'') \fwt_G(D'') \right) \\
    &= \label{eq:product-of-a} \sum_{D \in \mathcal{D}_{r,\lambda}(G)} a(\S; D) \fwt_G(D) \\
    &= \label{eq:final} \sum_{D \in \mathcal{D}_{r,\lambda}(G)} \langle \mathbf{D}, f\rangle \fwt_G(D).
\end{align}
 Here \eqref{eq:inductive-step} is by the inductive hypothesis, and \eqref{eq:pairing-to-a} and \eqref{eq:final} follow from \cite[Eq.~5.16]{FLL19}.

We now justify (\ref{eq:product-of-a}).
For each $D\in\mathcal{D}_{r,\lambda}(G)$,  the map 
\[
\bigsqcup_{\substack{D' \in \mathcal{D}_{r',\lambda'}(G) \\ D'' \in \mathcal{D}_{r-r',\lambda-\lambda'}(G) \\ D' \sqcup D'' = D}} A(\S'; D') \times A(\S''; D'') \to A(\S; D),
\]
defined by $L(\mathbf{e})=L'(\mathbf{e}) \sqcup L''(\mathbf{e})$ for each edge $\mathbf{e}$ of $D$, is easily seen to be a bijection. Indeed, given a consistent $r$-labeling $\mathcal{L}\in A(\mathcal{S};D)$ we can recover $D',D''$ and consistent labelings thereof by restricting to the label sets $\{1,\ldots,r'\}$ and $\{r'+1,\ldots,r\}$, respectively. Finally, it is immediate from the definition that $\fwt_G$ is multiplicative with respect to this map: if $D' \sqcup D''=D$, then $\fwt_G(D')\fwt_G(D'')=\fwt_G(D)$.
\end{proof}

The following equation \cite[Eq.\ 5.17]{FLL19} is directly implied from the definition of the FLL pairing \eqref{eq:comm_diagram}:
\begin{equation}\label{eq:PairingWithWebr}
    \langle \Web_r(N;\lambda),f\rangle = f(\widehat{X}(N)).
\end{equation}
As a corollary to Theorem \ref{thm:TwistGivenBySumOfPairings}, we obtain a twisted version of equation \eqref{eq:PairingWithWebr}. We first introduce some notation.

\begin{definition}
Let $G$ be a reduced top cell plabic graph of type $(k,n)$, $N$ a network on $G$, and $r\ge 1$.
We define $\fwt_N(D)$, the \emph{face weight of $D$ with respect to $N$}, to be the function $\fwt_G(D)$ evaluated at the point $\widehat{X}(N)\in\Gr(k,n)$:
\[\fwt_N(D)\coloneqq(\fwt_G(D))(\widehat{X}(N))\quad\in\C.\]
In analogy to the FLL tensor invariant $\Web_r(N;\lambda)$ from \eqref{eq:Webr_def}, we define the \emph{twisted FLL tensor invariant} $\Web_r^\twist(N;\lambda)$, which replaces the edge weight by the face weight:
\begin{equation}\label{eq:TwistedFLLTensor}
\Web_r^\twist(N;\lambda)\coloneqq\sum_{D \in \mathcal{D}_{r,\lambda}(G)}\fwt_N(D){\bf D} \quad\in\W_\lambda(\C^r).
\end{equation}
\end{definition}

\begin{corollary}\label{cor:twistedtensorpairing} For $N$ a $(k,n)$-network and $f\in\C[\Gr(k,n)]_\lambda,$ we have
    \begin{equation}\label{eq:PairingWithTwistedWebr}
    \langle \text{Web}_r^\twist(N;\lambda),f\rangle
    =(\twist(f))(\widehat{X}(N)).        
    \end{equation}
\end{corollary}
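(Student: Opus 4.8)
The plan is to derive Corollary~\ref{cor:twistedtensorpairing} directly from Theorem~\ref{thm:TwistGivenBySumOfPairings} by evaluating both sides of equation~\eqref{eq:TwistPairing} at the point $\widehat{X}(N)\in\Gr(k,n)$, and then recognizing the right-hand side as the FLL pairing of $f$ against the twisted tensor invariant.

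First I would start from Theorem~\ref{thm:TwistGivenBySumOfPairings}: for $f\in\C[\Gr(k,n)]_\lambda$ with $|\lambda|=kr$, we have the identity of elements of $\C[\Gr(k,n)]$
\[
\twist(f)=\sum_{D \in \mathcal{D}_{r,\lambda}(G)}\langle \mathbf{D}, f\rangle\, \fwt_G(D).
\]
Both sides are regular functions on $\Gr(k,n)$, so I may evaluate at the point $\widehat{X}(N)$ determined by the network $N$. The left side becomes $(\twist(f))(\widehat{X}(N))$, which is exactly the right-hand side of the claimed equation~\eqref{eq:PairingWithTwistedWebr}. On the right side, evaluation commutes with the finite sum and with multiplication by the scalars $\langle \mathbf{D}, f\rangle\in\C$, so it suffices to evaluate each $\fwt_G(D)$ at $\widehat{X}(N)$; by the definition of $\fwt_N(D)$ we get
\[
(\twist(f))(\widehat{X}(N))=\sum_{D \in \mathcal{D}_{r,\lambda}(G)}\langle \mathbf{D}, f\rangle\, \fwt_N(D).
\]

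Next I would rewrite the right-hand side as a single FLL pairing. Since the FLL pairing $\langle\cdot,\cdot\rangle:\W_\lambda(\C^r)\otimes\C[\Gr(k,n)]_\lambda\to\C$ is bilinear and the coefficients $\fwt_N(D)$ are scalars, we have
\[
\sum_{D \in \mathcal{D}_{r,\lambda}(G)}\langle \mathbf{D}, f\rangle\, \fwt_N(D)
=\Bigl\langle \sum_{D \in \mathcal{D}_{r,\lambda}(G)}\fwt_N(D)\,\mathbf{D},\ f\Bigr\rangle
=\langle \Web_r^\twist(N;\lambda),\ f\rangle,
\]
where the last equality is the definition~\eqref{eq:TwistedFLLTensor} of the twisted FLL tensor invariant. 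Combining the two displays yields~\eqref{eq:PairingWithTwistedWebr}.

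I do not expect any serious obstacle here; the corollary is essentially a formal consequence of Theorem~\ref{thm:TwistGivenBySumOfPairings} together with the definitions of $\fwt_N(D)$ and $\Web_r^\twist(N;\lambda)$. The only point requiring a word of care is the legitimacy of evaluating the polynomial identity of Theorem~\ref{thm:TwistGivenBySumOfPairings} at the specific point $\widehat{X}(N)$: this is fine because that theorem is an equality in the coordinate ring $\C[\Gr(k,n)]$ (indeed in $\C[\Gr(k,n)]_\lambda$ after clearing denominators, or one works in the Laurent ring in the initial cluster, on which $\widehat{X}(N)$ is well-defined since the frozen and cluster Plücker coordinates are nonvanishing at $\widehat{X}(N)$ for a genuine $(k,n)$-network), so evaluation is a well-defined ring homomorphism and in particular is linear and multiplicative. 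One might also remark that, parallel to how \eqref{eq:PairingWithWebr} follows directly from the commuting diagram~\eqref{eq:comm_diagram}, the corollary can be viewed as the ``twisted'' analogue obtained by substituting face weights for edge weights throughout.
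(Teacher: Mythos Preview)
Your proof is correct and follows essentially the same approach as the paper's: evaluate Theorem~\ref{thm:TwistGivenBySumOfPairings} at $\widehat{X}(N)$, use the definition of $\fwt_N(D)$, then pull the sum inside the pairing by bilinearity and recognize $\Web_r^\twist(N;\lambda)$. Your additional remark about the legitimacy of evaluation is more explicit than what the paper writes but not needed for this audience.
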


\begin{proof}
In Theorem \ref{thm:TwistGivenBySumOfPairings}, we showed $\twist(f) =\sum_{D \in \mathcal{D}_{r,\lambda}(G)}\langle \mathbf{D}, f\rangle \fwt_G(D)$. The proof follows by evaluating both sides of this equation at the point $\widehat{X}(N) \in \Gr(k,n)$:
\begin{align*}
    (\twist(f))(\widehat X(N))
    &=\sum_{D \in \mathcal{D}_{r,\lambda}(G)}\langle \mathbf{D}, f\rangle \fwt_N(D)
    &\text{(definition of $\fwt_N(D)$)}\\
    &=\left\langle\left(\sum_{D \in \mathcal{D}_{r,\lambda}(G)} \fwt_N(D)\mathbf{D}\right) ,\ f\right\rangle\\
    &=\langle \text{Web}_r^\twist(N;\lambda),f\rangle.
    &\text{(definition of $\Web_r^\tau(N;\lambda)$)}
\end{align*}
\end{proof}

\section{Computing web duals}\label{sec:ComputingDuals}

Our objective in this section is to compute the FLL pairing (Definition \ref{def:FLL_pairing}) on basis webs in order to make the formula given in Theorem \ref{thm:TwistGivenBySumOfPairings} more explicit. Note that when $\lambda = (1^n)$, we have that $\W_\lambda(\C^r)= \C[\Gr(r,n)]_\lambda$, so in this setting the FLL pairing is defined as $\langle\cdot,\cdot\rangle:\C[\Gr(r,n)]_\lambda\otimes \C[\Gr(k,n)]_\lambda\to\C$.

  Fraser--Lam--Le \cite{FLL19} observed that when $(r,k)=(2,3),\ (3,2),$ and $(3,3)$ and $\lambda = (1^n)$ for $n = kr$, the known web bases are dual under this pairing, and duality corresponds to transposing the standard Young tableaux associated to basis webs. We want to note that the observations made in their Appendix were jointly written with Addabbo--Bucher--Clearman--Escobar--Ma--Oh--Vogel. After this, Elkin--Musiker--Wright \cite{EMW23} showed the same duality between basis webs for $\W_\lambda(\C^3)$ and $\C[\Gr(3,8)]_\lambda$, when $\lambda=(2,1,\ldots,1)\in \N^8$. 
  We will show here that this correspondence also holds when $\lambda=(1^{12})$ and $(r,k)=(4,3)$ (or $(3,4)$).

Let $\lambda=(1^n)$ and let $\B$ denote Kuperberg's non-elliptic web basis for the 462-dimensional $\C$-vector space $\C[\Gr(3,12)]_\lambda$. Both $\mathcal{B}$ and the dual basis $\B^*$ for the space $\C[\Gr(4,12)]_\lambda\iso(\C[\Gr(3,12)]_\lambda)^*$ are depicted in Tables \ref{tablep1} and \ref{tablep2}. Each cell in the table depicts a basis web $X_i\in \B$ (right) and its dual $W_i=X_i^*\in\B^*$ (left), as well as their corresponding words $w(W_i)$ and $w(X_i)$. The $462$ basis webs in $\B$ can be grouped into $32$ orbits up to rotation and reflection. We depict one representative $W_i\in\B^*$ for each of these orbits. We order the table in lexicographic order on the words $w(W_i)$, choosing representative $W_i$ whose word is lexicographically minimal in each orbit.

For the remainder of this paper, for $1\le i\le 32$, we will let $W_i\in \B^*$ and $X_i\in \B$ denote the webs in cell $i$ of the table. Our main result in this section is the following. In the following, let ``$t$'' denote transpose.

\begin{theorem}\label{thm:DualityComputation}
   The bases $\B$ and $\B^*$, as depicted in Tables \ref{tablep1} and \ref{tablep2}, are dual with respect to the FLL pairing, up to sign. Precisely, 
   \[\langle \mathbf{W}_i, \mathbf{X}_j \rangle = \begin{cases}
       \sign(X_j)&i=j,\\
       0 &i\ne j.
   \end{cases}\]
   Moreover, the corresponding tableaux satisfy $T(W_i)=T(X_i)^t$. 
\end{theorem}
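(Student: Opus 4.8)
The plan is to verify the two claims of Theorem~\ref{thm:DualityComputation} essentially by a finite computation, organized so that the work is reduced from $32\times 32$ pairings to $32$ pairings plus a combinatorial argument. First I would address the tableau claim $T(W_i)=T(X_i)^t$. Since each cell of the tables lists the words $w(W_i)$ and $w(X_i)$ explicitly, and since the tableau $T(W)$ of a basis web is recovered from $w(W)$ (as explained after Theorem~\ref{thm:rotation-is-promotion}: the Yamanouchi word records the rows containing each value), it suffices to check that for each of the $32$ representatives the word $w(W_i)$ is the Yamanouchi reading word of $T(X_i)^t$. Concretely, $T(X_i)$ is a semistandard (here standard) tableau of shape $3\times 4$ with content $(1^{12})$, its transpose has shape $4\times 3$, and one checks entry-by-entry that reading $T(X_i)^t$ by rows produces exactly the word $w(W_i)$ printed in the table. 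This is a routine but genuine verification, carried out once per orbit representative; rotation/reflection closure then extends it to all $462$ webs via Theorem~\ref{thm:rotation-is-promotion} (promotion/evacuation on tableaux corresponds to rotation/reflection on webs, and transpose commutes appropriately with these operations).

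For the pairing claim, the key tool is Proposition~\ref{proposition:PairingAsCountingLabelings}: since $\lambda=(1^{12})$, every $\mathbf{X}_j\in\C[\Gr(4,12)]_{(1^{12})}$ is a product of four Pl\"ucker coordinates up to the web-basis expansion, but more directly, the FLL pairing $\langle\mathbf{W}_i,\mathbf{X}_j\rangle$ can be computed by first expanding $\mathbf{X}_j$ in Pl\"ucker coordinates $\Delta_{I_1}\Delta_{I_2}\Delta_{I_3}\Delta_{I_4}$ using the skein/wrench relations of Section~\ref{sec:preliminaries} (Proposition~\ref{prop:sign conversion} handles the sign bookkeeping in the FP convention), and then applying $\langle\mathbf{W}_i,\Delta_{I_1}\cdots\Delta_{I_4}\rangle = a(\mathcal{S};W_i)$ where $\mathcal{S}$ is dual to $(I_1,\dots,I_4)$ in the sense of Definition~\ref{def:I,S duality}. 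So the plan is: (1) for each $j$, expand $\mathbf{X}_j$ as a $\Z$-linear combination of degree-$4$ Pl\"ucker monomials; (2) for each $i$, compute $a(\mathcal{S};W_i)$ for the relevant $\mathcal{S}$'s by enumerating consistent $4$-labelings of the web $W_i$ (a finite search, small because these are non-elliptic/basis webs); (3) assemble the $32\times 32$ matrix and check it equals $\operatorname{diag}(\sign(X_1),\dots,\sign(X_{32}))$.

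The main obstacle — and the step I would devote the most care to — is making the off-diagonal vanishing and the diagonal normalization robust rather than a brute-force table dump. There is a conceptual shortcut: by Theorem~\ref{thm:asw1}, each basis web $W_i$ satisfies $a(\mathcal{S}_{W_i};W_i)=1$, and the boundary condition $\mathcal{S}_{W_i}$ is exactly the one dual to the Pl\"ucker monomial read off from $T(W_i)$'s columns. Combined with $T(W_i)=T(X_i)^t$, one expects that the ``leading'' Pl\"ucker monomial of $\mathbf{X}_i$ (with respect to a suitable term order on words) pairs to $1$ with $\mathbf{W}_i$ and to $0$ with $\mathbf{W}_j$ for $j\neq i$, because $w(W_j)$ being lexicographically different forces $a(\mathcal{S};W_j)=0$ for that particular $\mathcal{S}$; the remaining (non-leading) terms in the expansion of $\mathbf{X}_i$ must then be shown not to contribute, which is where one genuinely needs the explicit skein expansions. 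I would try to phrase this as an upper-triangularity argument with respect to the lexicographic order used to index the table, so that the matrix is unitriangular up to sign and hence equals its diagonal; whether that triangularity holds cleanly for Kuperberg's basis in this case, or whether the single exceptional cell mentioned in the introduction forces an ad hoc correction, is the crux. If triangularity fails, the fallback is simply the direct $32\times 32$ computation, which is finite and has presumably been done by computer; the write-up would then cite that verification, exhibiting a few representative cells (including the exceptional one) in detail and relegating the rest to a table or ancillary file.
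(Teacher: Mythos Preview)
Your framework is right---reduce by dihedral symmetry, expand each $\mathbf{X}_j$ in Pl\"ucker monomials via skein relations, then count consistent labelings of $W_i$---but there are two genuine gaps.

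First, the scope is larger than you state. The matrix to be shown diagonal is $462\times 462$, not $32\times 32$. Rotation equivariance (the paper's Lemma~\ref{lem:RotationBothWebs}) lets you fix one $W_i$ per dihedral orbit, but for that $W_i$ you must still pair against \emph{all} $462$ basis webs $X$, i.e.\ against every rotation $\rho^\ell(X_j)$ of every representative $X_j$. Knowing $\langle \mathbf{W}_i,\mathbf{X}_j\rangle$ does not determine $\langle \mathbf{W}_i,\rho^\ell(\mathbf{X}_j)\rangle$ without also rotating $W_i$, and $\rho^{-\ell}(W_i)$ is in general a different basis element. So the workload is on the order of $32\times 462$ pairings, not $32\times 32$; the paper is explicit about this.

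Second, the triangularity shortcut is unlikely to go through cleanly. Transposition of standard tableaux does not respect the lexicographic order on Yamanouchi words, so there is no obvious common total order making the pairing matrix unitriangular. You correctly flag that the non-leading Pl\"ucker terms of $\mathbf{X}_j$ ``must then be shown not to contribute,'' but that is precisely where the whole difficulty lies, and nothing in your outline handles it.

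The paper takes a different and more structured route. Rather than expanding every $\mathbf{X}_j$ fully, it proves several \emph{local obstruction lemmas} (Lemmas~\ref{lem: disjoint forks}, \ref{lem:DisconnectedConveniently}, \ref{lem:pairing0general}, \ref{lem:pairing0r=4k=3}): if $W$ and $X$ share certain boundary configurations---most basically, a common fork at the same pair of boundary vertices---then $\langle\mathbf{W},\mathbf{X}\rangle=0$ automatically, without any global expansion. Since forks correspond to descents of the tableau, a descent-set comparison (done in SageMath) eliminates the vast majority of the $32\times 462$ off-diagonal pairings. Only a small residual list needs direct skein expansion and coloring arguments, and the diagonal entries $\langle\mathbf{W}_i,\mathbf{X}_i\rangle=\sign(X_i)$ are checked one at a time. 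The exceptional $i=16$ (benzene) case is handled separately. Your brute-force fallback would eventually succeed, but the paper's fork lemmas are what make the verification humanly organizable and are the main new content of the section.
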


\begin{remark}\label{rmk:SymmetricFLL}
When $\lambda = (1^n)$, the immanant map is an isomorphism between $\mathcal{W}_\lambda(\C^r)^*$ and $   \mathcal{W}_{\lambda}(\C^k)$. In \cite[Theorem 8.1]{FLL19}, the authors show that this isomorphism is given by $\mathcal{W}_\lambda(\C^r)^* \to   \mathcal{W}_{\lambda}(\C^k) \otimes \epsilon$ as representations of $S_n$ where $\epsilon$ is the sign representation. From this description, we deduce that for $\mathbf{W} \in \mathcal{W}_\lambda(\C^r)$ and $\mathbf{X} \in \mathcal{W}_{\lambda}(\C^k)$, the FLL-pairing is symmetric i.e. $\langle \mathbf{W},\mathbf{X} \rangle = \langle \mathbf{X},\mathbf{W} \rangle$.
\end{remark}

We prove Theorem \ref{thm:DualityComputation} by computing many pairings $\langle \mathbf{W},\mathbf{X}\rangle$. This is \emph{a priori} a very large number of calculations (given that $\B$ and $\B^*$ each consist of $462$ basis webs). However, through a set of lemmas (i.e. Lemmas \ref{lem:RotationBothWebs}, \ref{lem: disjoint forks}, \ref{lem:DisconnectedConveniently}, \ref{lem:pairing0general}, and \ref{lem:pairing0r=4k=3}), which utilize rotational symmetry, $\SL_3$ skein relations, identify pairings that vanish based on local features, and Proposition \ref{proposition:PairingAsCountingLabelings}, we are able to dramatically reduce the number of computations.

 Recall we let $\rho$ denote a counterclockwise $2\pi/n$ rotation of $\mathcal{W}_\lambda(\C^r)$ web where $\lambda \in \mathbb{N}^n$ and, by abuse of notation, also the induced function on web invariants of $\mathcal{W}_\lambda(\C^r)$. 

\begin{lemma}\label{lem:RotationBothWebs}
 Let $\lambda \in \mathbb{N}^n$ be such that  $\vert \lambda \vert = kr$. If $\mathbf{W} \in \mathcal{W}_\lambda(\mathbb{C}^r)$ and $\mathbf{X} \in \mathbb{C}[\Gr(k,n)]$ are two web invariants, then  
 \[
\sign(\rho(X))\langle \mathbf{W},\mathbf{X} \rangle = \sign(X)\langle \rho(\mathbf{W}),\rho(\mathbf{X})\rangle.
\]
\end{lemma}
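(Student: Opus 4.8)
The plan is to reduce the claimed identity to the combinatorial characterization of the FLL pairing in Proposition~\ref{proposition:PairingAsCountingLabelings} and then track how the relevant signs transform under rotation. First I would recall that for web invariants of the special form appearing here, the pairing $\langle \mathbf{W}, \plu{I_1}\cdots\plu{I_r}\rangle$ equals $a(\mathcal{S};W)$, where $\mathcal{S}$ is the boundary label data dual to $(I_1,\ldots,I_r)$, and that for $\mathbf{X}\in\C[\Gr(k,n)]$ one has, by Definition~\ref{def:W_lambdainvariant} together with Proposition~\ref{prop:sign conversion}, that $\mathbf{X}(E_{\mathcal{S}_X}) = \sign(X)\,a(\mathcal{S}_X;X)$ and $a(\mathcal{S}_X;X)>0$. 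So the content of the lemma is really about comparing the quantity $a(\mathcal{S};W)$-type count for $(\mathbf{W},\mathbf{X})$ versus the same count for the rotated pair $(\rho(\mathbf{W}),\rho(\mathbf{X}))$, and the only thing that can change is the sign attached to the boundary label condition.

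The key steps, in order, would be: (1) Expand $\mathbf{X}$ in the Pl\"ucker basis of $\C[\Gr(k,n)]_\lambda$ and use bilinearity of the pairing to reduce to the case $\mathbf{X} = \plu{I_1}\cdots\plu{I_r}$, i.e. to the statement about consistent labelings; equivalently, work directly with the defining formula $\mathbf{X}(E_\mathcal{S})$ and the identity $\langle\mathbf{W},\mathbf{X}\rangle = \sum_{\mathcal{S}}(\text{coeff})\,a(\mathcal{S};W)$. (2) Observe that rotation $\rho$ acts compatibly on both sides of the duality: rotating a web by $2\pi/n$ corresponds to the cyclic shift $\rho(\mathcal{S}) = (S(2),\ldots,S(n),S(1))$ on boundary label conditions (this is exactly the rotation compatibility recorded in Remark~\ref{rem:props of FP signs} and in Theorem~\ref{thm:rotation-is-promotion}), and this shift is a bijection on the set of $\mathcal{S}$ satisfying \eqref{eq:multiset_condition} that preserves the labeling counts: $a(\rho(\mathcal{S});\rho(W)) = a(\mathcal{S};W)$. (3) Therefore the unsigned counts underlying $\langle\mathbf{W},\mathbf{X}\rangle$ and $\langle\rho(\mathbf{W}),\rho(\mathbf{X})\rangle$ agree term-by-term; the only discrepancy is a global sign $\sign(\rho(\mathcal{S}))/\sign(\mathcal{S})$ coming from the change in the number of inversions of the word $w(\mathcal{S})$ when we cyclically move the block $S(1)$ from the front to the back. (4) Compute this sign: moving a block of $\lambda_1$ letters, each being an element of $S(1)$, past the remaining $|\lambda|-\lambda_1 = kr - \lambda_1$ letters changes the inversion count by a predictable amount, and when one bookkeeps this over the definition of $\sign(X)$ versus $\sign(\rho(X))$ (which is built from exactly the word $w(X) = w(\overline{X})$, via unclasping), the net effect is precisely the ratio $\sign(\rho(X))/\sign(X)$. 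This is what produces the asserted relation $\sign(\rho(X))\langle\mathbf{W},\mathbf{X}\rangle = \sign(X)\langle\rho(\mathbf{W}),\rho(\mathbf{X})\rangle$.

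The cleanest way to organize step (4) is probably to avoid an explicit inversion count altogether: since $\rho$ is an honest automorphism of $\mathcal{W}_\lambda(\C^r)$ and of $\C[\Gr(k,n)]_\lambda$ (it is induced by the cyclic $S_n$-action), and since the immanant map intertwines the $S_n$-actions up to the sign twist recorded in Remark~\ref{rmk:SymmetricFLL} (the isomorphism $\mathcal{W}_\lambda(\C^r)^*\to\mathcal{W}_\lambda(\C^k)\otimes\epsilon$), the pairing satisfies $\langle\rho(\mathbf{W}),\rho(\mathbf{X})\rangle = \sgn(c)\langle\mathbf{W},\mathbf{X}\rangle$ where $c\in S_n$ is the long cycle $(1\,2\,\cdots\,n)$ and $\sgn(c) = (-1)^{n-1}$. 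Then the lemma reduces to the purely web-combinatorial identity $\sign(\rho(X)) = (-1)^{n-1}\sign(X)$ for any semistandard web $X$ of degree $(1^n)$ — or more generally the appropriately weighted version for general $\lambda$ — which is a direct consequence of how $\sign(X) = \sign(\overline{X})$ is defined via inversions of $w(\overline{X})$ under the block cyclic shift.

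I expect the main obstacle to be step (4)/the sign bookkeeping: one must be careful that $\sign(X)$ is defined through the \emph{unclasped} word $w(\overline{X})$ and the minimal boundary label condition $\mathcal{S}_X$, so the cyclic shift on $X$ does not simply cyclically shift $w(\overline{X})$ by one letter but rather by the block of size $\lambda_1$, and the lexicographic-minimality convention in the definition of $w(W)$ interacts subtly with rotation (the minimal word of $\rho(W)$ is generally not the rotation of the minimal word of $W$). Navigating this requires either a careful lemma that the \emph{signed} count $\sign(\mathcal{S})a(\mathcal{S};W)$ is well-defined independent of which $\mathcal{S}$ in a given ``multiset class'' one picks (which follows from $\mathbf{W}$ being a genuine element of $\mathcal{W}_\lambda(\C^r)$, hence its value on any $E_\mathcal{S}$ is unambiguous), after which the rotation identity for the pairing follows formally from the $S_n$-equivariance in Remark~\ref{rmk:SymmetricFLL} together with the explicit sign $(-1)^{n-1}$ of the long cycle. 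If that equivariance statement is invoked cleanly, the rest is routine.
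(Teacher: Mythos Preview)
Your overall strategy matches the paper's: expand $\mathbf{X}$ as a combination of Pl\"ucker products, use that $\langle \mathbf{W}, \Delta_{I_1}\cdots\Delta_{I_r}\rangle = a(\mathcal{S}; W)$ and that $a(\rho(\mathcal{S}); \rho(W)) = a(\mathcal{S}; W)$, and then isolate the sign discrepancy. Where the paper differs is in how the sign you flag in step~(4) is handled. Rather than tracking $\sign(\mathcal{S})$ or invoking $S_n$-equivariance, the paper passes at the outset to the Fomin--Pylyavskyy convention via Proposition~\ref{prop:sign conversion}, writing $\langle \mathbf{W},\mathbf{X}\rangle = \sign(X)\langle \mathbf{W},\mathbf{X}^{\text{FP}}\rangle$. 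Since $\mathbf{X}^{\text{FP}}$ is rotation-equivariant in the strong sense of Remark~\ref{rem:props of FP signs}, the wrench-relation expansion $\mathbf{X}^{\text{FP}} = \sum_\ell c_\ell \mathbf{X}_\ell^{\text{FP}}$ into Pl\"ucker products has \emph{literally the same} coefficients $c_\ell$ as the expansion of $(\rho(X))^{\text{FP}}$; the factors $\sign(X)$ and $\sign(\rho(X))$ are thus peeled off in the first line and no further sign analysis is needed.

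Your step~(3) locates the sign in the ratio $\sign(\rho(\mathcal{S}))/\sign(\mathcal{S})$, but that quantity does not enter the formula $\langle \mathbf{W}, \Delta_{I_1}\cdots\Delta_{I_r}\rangle = a(\mathcal{S}; W)$. The sign really lives in the Pl\"ucker-expansion coefficients of $\mathbf{X}$ versus those of $\rho(\mathbf{X})$, and proving these differ by a single global factor is precisely what the FP convention accomplishes for free --- without it one must argue separately that the ratio is independent of the term, which you do not do. Your $S_n$-equivariance alternative is a genuine second route, but it trades the original problem for two others of comparable delicacy: checking that geometric rotation of the web agrees with the long-cycle action on the invariant up to the predicted sign, and then establishing $\sign(\rho(X)) = (-1)^{n-1}\sign(X)$ despite, as you note, $w(\rho(X))$ not being the cyclic shift of $w(X)$. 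Both can be done, but neither is carried out, so the proposal stops short of a proof where the paper's FP-convention argument is complete in a few lines.
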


\begin{proof}
Recall that one way to compute $\langle \mathbf{W}, \mathbf{X}\rangle$ is to write $\mathbf{X}$ as a sum of products of Pl\"ucker coordinates. We switch to the FP sign conventions here for their consistent behavior under wrench relations (see Remark \ref{rem:props of FP signs}):\[
\langle \mathbf{W}, \mathbf{X}\rangle = \sign(X) \langle \mathbf{W}, \mathbf{X}^{FP} \rangle = \sign(X) \sum_\ell c_\ell \langle \mathbf{W},\mathbf{X}_\ell^{FP} \rangle = \sign(X) \sum_\ell c_\ell a(\mathcal{S}_\ell,W)
\]
where $S_\ell$ is the boundary label condition determined by $X_\ell$, a product of Pl\"ucker coordinates. Now, we do the same after rotating each web, \begin{align*}
\langle \rho(\mathbf{W}), \rho(\mathbf{X}\rangle) &= \sign(\rho(X)) \langle \rho(\mathbf{W}), \rho(\mathbf{X})^{FP} \rangle\\
&= \sign(\rho(X)) \sum_\ell c_\ell \langle \rho(\mathbf{W}),\rho(\mathbf{X}_\ell)^{FP} \rangle\\
&= \sign(\rho(X)) \sum_\ell c_\ell a(\rho(\mathcal{S}_\ell),\rho(W),)\\
&= \sign(\rho(X)) \sum_\ell c_\ell a(\mathcal{S}_\ell,W),
\end{align*}
where we recall $\rho(\mathcal{S})$ denotes a cyclic permutation.
\end{proof}

The required computations can be further reduced using the next series of lemmas. These concern local configurations of two webs which guarantee their web invariants pair to 0. In our proof of Theorem \ref{thm:DualityComputation}, we will sometimes apply these lemmas after using skein relations which introduce crossings into our diagrams. Therefore, many lemmas are phrased in terms of tensor diagrams. We begin with a basic but important structure in a web.

\begin{definition} For $i<j\in[n]$, we say a tensor diagram $W$ has a \emph{fork at $(i,j)$} if boundary vertices $i$ and $j$ are each connected to a common internal vertex by a single edge.
\end{definition}

A given fork can be preserved when expanding an $\SL_3$ tensor invariant from a tensor diagram in terms of Pl{\"u}cker coordinates. 

\begin{lemma}\label{lemma: plu expansion fork}
    Let $X$ be an $\SL_3$ tensor diagram, and suppose $X$ has a fork between boundary vertices $i$ and $j$. Then there exists an expansion of $\mathbf{X}$ as a linear combination of products of Pl\"ucker coordinates, $\mathbf{X}=\sum_\ell\pm \mathbf{X}_\ell,$ such that each term $\mathbf{X}_\ell$ has some factor  $\plu{I_\ell}$ with $i,j\in I_\ell$.
\end{lemma}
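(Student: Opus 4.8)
The key idea is to process the fork locally using the $\SL_3$ skein calculus while tracking which Plücker coordinates appear. Since $i$ and $j$ are attached by single edges to a common internal vertex $v$, that vertex has exactly one further edge $\mathbf{e}$ leaving it (because $v$ has degree $3$ in an $\SL_3$ web, with all three incident edges having multiplicity one). The strategy is to repeatedly apply the wrench relation of Figure~\ref{fig:sl3_wrench} and the multiplicativity/superimposition properties from Remark~\ref{rem:props of FP signs}, always keeping the fork at $(i,j)$ intact, until the tensor diagram is reduced to a disjoint union of ``tripod'' components (a single internal vertex joined to three boundary vertices, whose FP invariant is a single Plücker coordinate) together with simpler pieces, then argue by induction that the fork at $(i,j)$ forces $i,j$ into a common Plücker factor in every resulting term.

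First I would set up the induction on the number of internal vertices of $X$ (equivalently on the number of edges not incident to the boundary). In the base case, when the only internal vertex is $v$ itself, the third edge $\mathbf{e}$ at $v$ must go to a third boundary vertex, say $\ell$; then $\mathbf{X}^{\mathrm{FP}}$ (after accounting for any components disjoint from $v$, which contribute their own product of Plücker coordinates by multiplicativity) has a factor $\pm\plu{\{i,j,\ell\}}$, and $i,j$ are together in that factor. For the inductive step, I would look at the edge $\mathbf{e}$ leaving $v$: it ends at some internal vertex $u$. I would apply the wrench relation at $u$ (or at $v$, pulling the two boundary strands through), which rewrites $\mathbf{X}^{\mathrm{FP}}$ as a signed sum of two tensor diagrams, each with strictly fewer internal vertices, and — crucially — each of which still has boundary vertices $i$ and $j$ attached by single edges to a common internal vertex, i.e.\ still has a fork at $(i,j)$. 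One of the two resulting diagrams may acquire a crossing, but that is harmless: Lemma~\ref{lemma: plu expansion fork} is stated for tensor diagrams, not just webs, and the wrench relation is valid for tensor invariants in the FP convention. Applying the inductive hypothesis to each of the two diagrams yields the desired expansion $\mathbf{X}=\sum_\ell \pm\mathbf{X}_\ell$ with each $\mathbf{X}_\ell$ having a Plücker factor containing $\{i,j\}$.

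The main obstacle — and the step I would be most careful about — is verifying that the fork at $(i,j)$ genuinely \emph{persists} under the chosen skein move, and that the move strictly decreases the internal vertex count so the induction terminates. The wrench relation replaces a local neighborhood of an internal edge with two configurations; one must check that the particular way it is applied (which internal edge/vertex is chosen, and the orientation of the contraction) does not disturb the two pendant edges $i{-}v$ and $j{-}v$, so that $v$ (or its image) still witnesses the fork in both terms. A clean way to guarantee this is always to apply the wrench to the edge $\mathbf{e}$ that leads \emph{away} from $v$ along the fork's stem: the relation then either contracts $\mathbf{e}$ (merging $v$ with its neighbor $u$, reducing the vertex count by two and leaving a vertex of degree three still carrying the $i$ and $j$ pendants) or produces a crossing configuration with one fewer internal vertex, again preserving the pendants. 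A secondary point to address is that Proposition~\ref{prop:sign conversion} and Remark~\ref{rem:props of FP signs} only give control of signs up to a global sign, but since the statement only claims the expansion ``$=\sum_\ell \pm\mathbf{X}_\ell$'' with unspecified signs, this causes no difficulty. Finally, I would note that disjoint components of $X$ not meeting $v$ are handled once and for all by the multiplicativity of FP invariants under superimposition, so the induction can be run on the connected component containing the fork.
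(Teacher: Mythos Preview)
Your overall strategy---induct by applying the wrench relation while keeping the fork intact---is exactly right, and matches the paper. The gap is in \emph{where} you apply the wrench. You propose to apply it at the edge $\mathbf{e}$ joining the fork vertex $v$ to its internal neighbor $u$; but the wrench relation at a white--black edge deletes \emph{both} endpoints and reconnects their four remaining legs (as ``cross $-$ straight'' in Figure~\ref{fig:sl3_wrench}). With $\mathbf{e} = v{-}u$, those four legs are the boundary strands to $i,j$ (coming from $v$) and the two other strands $p,q$ (coming from $u$); in each of the two resulting terms, $i$ and $j$ are now attached to $p$ and $q$ separately, so the fork at $(i,j)$ is destroyed, not preserved. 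Your description of one term as ``contracting $\mathbf{e}$, merging $v$ with $u$ and leaving a trivalent vertex carrying the $i,j$ pendants'' is not what the wrench relation does (and such a merged vertex would have degree four, which does not occur in $\SL_3$ diagrams).

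The fix---and what the paper does---is to apply the wrench one step further along the stem. Since all boundary vertices are black and the diagram is bipartite, the internal black vertex $u$ has (besides $v$) two \emph{internal white} neighbors; pick one, say $p$, and apply the wrench at the edge $u{-}p$. This removes $u$ and $p$ while $v$ survives untouched, so both resulting diagrams $X_1,X_2$ still carry the fork at $(i,j)$, and each has one fewer internal white vertex. Accordingly the paper inducts on the number $w$ of internal white vertices, with base case $w=k$ (the Pl\"ucker degree), where $X$ has no internal black vertices and is already a product of tripods, one of which contains $i$ and $j$. With this single correction to the location of the wrench move, your argument goes through and coincides with the paper's.
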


\begin{proof}
     Let $k$ be the Pl\"{u}cker degree of $X$. We will induct on $w$, the number of internal white vertices of $X$.

    For the base case, if $w=k,$ then $X$ has $k$ internal white vertices and no internal black vertices, so $\mathbf{X}$ is the product of $k$ Pl\"ucker coordinates. By the assumption that $i$ and $j$ are connected by a fork,  one of the Pl\"ucker factors $\plu I$ must have $i,j\in I.$

    For the inductive step, suppose $w>k.$ Consider the connected component of the graph containing vertices $i$ and $j$, which we denote by $X'$. If $X'$ is a tripod (that is, $\mathbf{X}'=\Delta_I$ with $i,j\in I$), then taking any Pl\"ucker expansion of $\frac{\mathbf{X}}{\mathbf{X'}}$ yields a desired expansion of $\mathbf{X}$. If $X'$ is not a tripod, then the fork $(i,j)$ in $X$ is contained in a configuration as below.
    
\begin{center}
\begingroup%
  \makeatletter%
  \providecommand\color[2][]{%
    \errmessage{(Inkscape) Color is used for the text in Inkscape, but the package 'color.sty' is not loaded}%
    \renewcommand\color[2][]{}%
  }%
  \providecommand\transparent[1]{%
    \errmessage{(Inkscape) Transparency is used (non-zero) for the text in Inkscape, but the package 'transparent.sty' is not loaded}%
    \renewcommand\transparent[1]{}%
  }%
  \providecommand\rotatebox[2]{#2}%
  \newcommand*\fsize{\dimexpr\f@size pt\relax}%
  \newcommand*\lineheight[1]{\fontsize{\fsize}{#1\fsize}\selectfont}%
  \ifx\svgwidth\undefined%
    \setlength{\unitlength}{72.13164562bp}%
    \ifx\svgscale\undefined%
      \relax%
    \else%
      \setlength{\unitlength}{\unitlength * \real{\svgscale}}%
    \fi%
  \else%
    \setlength{\unitlength}{\svgwidth}%
  \fi%
  \global\let\svgwidth\undefined%
  \global\let\svgscale\undefined%
  \makeatother%
  \begin{picture}(1,1.02642935)%
    \lineheight{1}%
    \setlength\tabcolsep{0pt}%
    \put(0,0){\includegraphics[width=\unitlength,page=1]{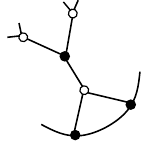}}%
    \put(0.4065886,0.01459273){\color[rgb]{0,0,0}\rotatebox{-0.15281934}{\makebox(0,0)[lt]{\lineheight{0}\smash{\begin{tabular}[t]{l}$j$\end{tabular}}}}}%
    \put(0.91060152,0.26808761){\color[rgb]{0,0,0}\rotatebox{-0.15281934}{\makebox(0,0)[lt]{\lineheight{0}\smash{\begin{tabular}[t]{l}$i$\end{tabular}}}}}%
  \end{picture}%
\endgroup%

\end{center}

Applying a wrench relation at the highlighted edges, we obtain the following difference, where the signs come from the FP conventions.

\begin{center}
\begin{tikzpicture}

\node(0) at (0,0) {
\begingroup%
  \makeatletter%
  \providecommand\color[2][]{%
    \errmessage{(Inkscape) Color is used for the text in Inkscape, but the package 'color.sty' is not loaded}%
    \renewcommand\color[2][]{}%
  }%
  \providecommand\transparent[1]{%
    \errmessage{(Inkscape) Transparency is used (non-zero) for the text in Inkscape, but the package 'transparent.sty' is not loaded}%
    \renewcommand\transparent[1]{}%
  }%
  \providecommand\rotatebox[2]{#2}%
  \newcommand*\fsize{\dimexpr\f@size pt\relax}%
  \newcommand*\lineheight[1]{\fontsize{\fsize}{#1\fsize}\selectfont}%
  \ifx\svgwidth\undefined%
    \setlength{\unitlength}{72.13164562bp}%
    \ifx\svgscale\undefined%
      \relax%
    \else%
      \setlength{\unitlength}{\unitlength * \real{\svgscale}}%
    \fi%
  \else%
    \setlength{\unitlength}{\svgwidth}%
  \fi%
  \global\let\svgwidth\undefined%
  \global\let\svgscale\undefined%
  \makeatother%
  \begin{picture}(1,1.02642935)%
    \lineheight{1}%
    \setlength\tabcolsep{0pt}%
    \put(0,0){\includegraphics[width=\unitlength,page=1]{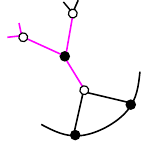}}%
    \put(0.4065886,0.01459273){\color[rgb]{0,0,0}\rotatebox{-0.15281934}{\makebox(0,0)[lt]{\lineheight{0}\smash{\begin{tabular}[t]{l}$j$\end{tabular}}}}}%
    \put(0.91060152,0.26808761){\color[rgb]{0,0,0}\rotatebox{-0.15281934}{\makebox(0,0)[lt]{\lineheight{0}\smash{\begin{tabular}[t]{l}$i$\end{tabular}}}}}%
  \end{picture}%
\endgroup%
};
\node(1) at (0,-1.75) {$X$};
\node(2) at (2,0) {$=$};
\node(3) at (4,0) {
\begingroup%
  \makeatletter%
  \providecommand\color[2][]{%
    \errmessage{(Inkscape) Color is used for the text in Inkscape, but the package 'color.sty' is not loaded}%
    \renewcommand\color[2][]{}%
  }%
  \providecommand\transparent[1]{%
    \errmessage{(Inkscape) Transparency is used (non-zero) for the text in Inkscape, but the package 'transparent.sty' is not loaded}%
    \renewcommand\transparent[1]{}%
  }%
  \providecommand\rotatebox[2]{#2}%
  \newcommand*\fsize{\dimexpr\f@size pt\relax}%
  \newcommand*\lineheight[1]{\fontsize{\fsize}{#1\fsize}\selectfont}%
  \ifx\svgwidth\undefined%
    \setlength{\unitlength}{72.13164562bp}%
    \ifx\svgscale\undefined%
      \relax%
    \else%
      \setlength{\unitlength}{\unitlength * \real{\svgscale}}%
    \fi%
  \else%
    \setlength{\unitlength}{\svgwidth}%
  \fi%
  \global\let\svgwidth\undefined%
  \global\let\svgscale\undefined%
  \makeatother%
  \begin{picture}(1,1.02642935)%
    \lineheight{1}%
    \setlength\tabcolsep{0pt}%
    \put(0,0){\includegraphics[width=\unitlength,page=1]{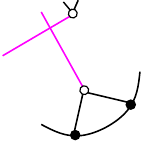}}%
    \put(0.4065886,0.01459273){\color[rgb]{0,0,0}\rotatebox{-0.15281934}{\makebox(0,0)[lt]{\lineheight{0}\smash{\begin{tabular}[t]{l}$j$\end{tabular}}}}}%
    \put(0.91060152,0.26808761){\color[rgb]{0,0,0}\rotatebox{-0.15281934}{\makebox(0,0)[lt]{\lineheight{0}\smash{\begin{tabular}[t]{l}$i$\end{tabular}}}}}%
  \end{picture}%
\endgroup%
};
\node(4) at (6,0) {$-$};
\node(5) at (8,0) {
\begingroup%
  \makeatletter%
  \providecommand\color[2][]{%
    \errmessage{(Inkscape) Color is used for the text in Inkscape, but the package 'color.sty' is not loaded}%
    \renewcommand\color[2][]{}%
  }%
  \providecommand\transparent[1]{%
    \errmessage{(Inkscape) Transparency is used (non-zero) for the text in Inkscape, but the package 'transparent.sty' is not loaded}%
    \renewcommand\transparent[1]{}%
  }%
  \providecommand\rotatebox[2]{#2}%
  \newcommand*\fsize{\dimexpr\f@size pt\relax}%
  \newcommand*\lineheight[1]{\fontsize{\fsize}{#1\fsize}\selectfont}%
  \ifx\svgwidth\undefined%
    \setlength{\unitlength}{73.18484089bp}%
    \ifx\svgscale\undefined%
      \relax%
    \else%
      \setlength{\unitlength}{\unitlength * \real{\svgscale}}%
    \fi%
  \else%
    \setlength{\unitlength}{\svgwidth}%
  \fi%
  \global\let\svgwidth\undefined%
  \global\let\svgscale\undefined%
  \makeatother%
  \begin{picture}(1,1.01165812)%
    \lineheight{1}%
    \setlength\tabcolsep{0pt}%
    \put(0,0){\includegraphics[width=\unitlength,page=1]{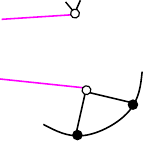}}%
    \put(0.41512833,0.01438273){\color[rgb]{0,0,0}\rotatebox{-0.15281934}{\makebox(0,0)[lt]{\lineheight{0}\smash{\begin{tabular}[t]{l}$j$\end{tabular}}}}}%
    \put(0.91188804,0.26422959){\color[rgb]{0,0,0}\rotatebox{-0.15281934}{\makebox(0,0)[lt]{\lineheight{0}\smash{\begin{tabular}[t]{l}$i$\end{tabular}}}}}%
  \end{picture}%
\endgroup%
};
\node(6) at (4,-1.75){$X_1$};
\node(7) at (8,-1.75){$X_2$};
\end{tikzpicture}
\end{center}

    Both $X_1$ and $X_2$ have forks connecting $i$ and $j$ and have exactly $w-1$ internal white vertices. Thus by the inductive hypothesis, $\mathbf{X}_1$ and $\mathbf{X}_2$ each have a desired Pl\"ucker expansion. Their difference is a desired expansion of $\mathbf{X}$.
\end{proof}

We can now immediately determine many pairs of webs with trivial FLL-pairings. 

\begin{lemma}\label{lem: disjoint forks}
  Let $\lambda \in \mathbb{N}^n$ with $|\lambda|=3r$. Let $\mathbf{W}\in\mathcal{W}_\lambda(\C^r)$ and $\mathbf{X}\in\C[\Gr(3,n)]_\lambda$ be invariants corresponding to tensor diagrams $W$ and $X$. If both $W$ and $X$ have forks at $(i,j)$, then $\langle \mathbf{W}, \mathbf{X}\rangle= 0.$
\end{lemma}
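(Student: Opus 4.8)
The plan is to reduce the vanishing of $\langle\mathbf{W},\mathbf{X}\rangle$ to a statement about consistent labelings, using the Plücker expansion furnished by the fork in $X$ and then exploiting the fork in $W$ to rule out all such labelings.

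First I would apply Lemma~\ref{lemma: plu expansion fork} to $X$. Since $X$ has a fork at $(i,j)$, we obtain an expansion $\mathbf{X}=\sum_\ell \pm\mathbf{X}_\ell$ in which each $\mathbf{X}_\ell=\Delta_{I_1^{(\ell)}}\cdots\Delta_{I_r^{(\ell)}}$ is a product of Plücker coordinates having some factor $\Delta_{I_a^{(\ell)}}$ with $i,j\in I_a^{(\ell)}$. By bilinearity of the FLL pairing and Proposition~\ref{proposition:PairingAsCountingLabelings}, this rewrites $\langle\mathbf{W},\mathbf{X}\rangle$ as a signed sum of the integers $a(\mathcal{S}_\ell;W)$, where $\mathcal{S}_\ell$ is the list of boundary label subsets dual (in the sense of Definition~\ref{def:I,S duality}) to $(I_1^{(\ell)},\ldots,I_r^{(\ell)})$. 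Since products of Plücker coordinates are honest elements of $\C[\Gr(3,n)]$, any global sign discrepancy between $\mathbf{X}$ and its FP normalization is irrelevant here. So it suffices to show $a(\mathcal{S}_\ell;W)=0$ for each $\ell$.

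Next I would fix $\ell$ and play the two forks against each other. On one side, because $I_a^{(\ell)}$ contains both $i$ and $j$, duality forces $a\in S_\ell(i)\cap S_\ell(j)$, so these two boundary label sets share an element. On the other side, let $v$ be the internal vertex of $W$ to which boundary vertices $i$ and $j$ attach via the two distinct fork edges $\mathbf{e}_i$ and $\mathbf{e}_j$. In any consistent $r$-labeling $\mathcal{L}\in A(\mathcal{S}_\ell;W)$ we have $L(\mathbf{e}_i)=S_\ell(i)$ and $L(\mathbf{e}_j)=S_\ell(j)$, while the conditions defining a consistent labeling at $v$ — namely $|L(\mathbf{e})|=m(\mathbf{e})$, $\bigcup_{\mathbf{e}\ni v}L(\mathbf{e})=[r]$, and $\sum_{\mathbf{e}\ni v}m(\mathbf{e})=r$ — force the sets $L(\mathbf{e})$ for $\mathbf{e}\ni v$ to partition $[r]$, so in particular $L(\mathbf{e}_i)\cap L(\mathbf{e}_j)=\emptyset$. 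This contradicts $a\in S_\ell(i)\cap S_\ell(j)$, hence $A(\mathcal{S}_\ell;W)=\emptyset$ and $a(\mathcal{S}_\ell;W)=0$. Summing over $\ell$ gives $\langle\mathbf{W},\mathbf{X}\rangle=0$.

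This is a short argument and I do not expect a real obstacle; the one point worth stating carefully is the partition property at $v$, i.e.\ that "union $=[r]$" together with "the sizes $|L(\mathbf{e})|=m(\mathbf{e})$ sum to $r$" forces pairwise disjointness of the label sets around an internal vertex — this is exactly what makes the labels on $\mathbf{e}_i$ and $\mathbf{e}_j$ incompatible with the shared Plücker index $a$. Note that this argument never uses that the fork edges have multiplicity one, so it applies verbatim when $W$ is dual semistandard with an hourglass edge at $i$ or $j$.
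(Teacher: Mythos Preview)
Your proof is correct and follows essentially the same approach as the paper: apply Lemma~\ref{lemma: plu expansion fork} to $X$ to get a Pl\"ucker expansion in which every term has a factor $\Delta_I$ with $i,j\in I$, then use Proposition~\ref{proposition:PairingAsCountingLabelings} and the fork in $W$ to see that each term pairs to zero. Your explicit justification of the partition property at the internal fork vertex (sizes sum to $r$ and union is $[r]$, hence the label sets are pairwise disjoint) is exactly what the paper means by ``this coloring is not proper,'' spelled out in more detail.
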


\begin{proof}
If $i,j$ are connected by a fork in $X$, then by Lemma \ref{lemma: plu expansion fork} there is always a way to expand $\mathbf{X}$ as a sum of products of Pl\"ucker coordinates, $\mathbf{X}=\sum_\ell \pm \mathbf{X}_\ell,$ such that each term $\mathbf{X}_\ell$ has some factor  $\plu{I_\ell}$ with $i,j\in I_\ell$. 

Each $X_\ell$ prescribes a coloring of the boundary edges of $W$ where $i,j$ have the same color, but if $i,j$ are connected by a fork in $W$ this coloring is not proper. Thus $\langle \mathbf{W}, \mathbf{X}_\ell\rangle=0$ for all $\ell$, and by bilinearity $\langle \mathbf{W}, \mathbf{X}\rangle=\sum_\ell \pm \langle \mathbf{W}, \mathbf{X}_\ell\rangle=0.$ 
\end{proof}

Lemma \ref{lem: disjoint forks} applies to many pairs of webs. For instance, in Tables \ref{tablep1} and \ref{tablep2}, we see that for all $s \leq 20$, $X_s$ has a size 3 fork, i.e., three consecutive boundary vertices which are incident to a common interior vertex. Meanwhile, for each $W_t$ with $t > 20$, and each set of three consecutive boundary vertices, at least two are in a fork. Therefore, if $t > 20$ and $s \leq 20$, for any $\ell$ and $m$ we immediately see $\langle \rho^\ell(\mathbf{W}_j),\rho^m(\mathbf{X}_i)\rangle = 0$.

Interpreting the FLL pairing in terms of coloring gives us more configurations which must pair to 0.

\begin{lemma}\label{lem:DisconnectedConveniently}
Let $\lambda \in \mathbb{N}^n$ with $\vert \lambda \vert = kr$.  
Let $\mathbf{W} \in \mathcal{W}_\lambda(\mathbb{C}^r)$ be an invariant associated to a tensor diagram $W$ which contains a boundary configuration as below. We emphasize that the depicted internal black vertex is not adjacent to any other vertices. Let $A_1, A_2,\ldots,A_a$ be the subsets of $[n]$ corresponding to the sets of boundary vertices in this connected component where all distinct pairs in each $A_i$ are in a fork and $A_i \cap A_j = \emptyset$ for all $i \neq j$. Let $\mathbf{X} \in \mathbb{C}[\Gr(k,n)]_\lambda$ be an invariant associated to a tensor diagram which has a factor $\Delta_I$ where $I \cap A_i \neq \emptyset$ for each set $A_i, 1 \leq i \leq a$. Then, $\langle \mathbf{W},\mathbf{X}\rangle =  0$. 

\begin{center}
\begingroup%
  \makeatletter%
  \providecommand\color[2][]{%
    \errmessage{(Inkscape) Color is used for the text in Inkscape, but the package 'color.sty' is not loaded}%
    \renewcommand\color[2][]{}%
  }%
  \providecommand\transparent[1]{%
    \errmessage{(Inkscape) Transparency is used (non-zero) for the text in Inkscape, but the package 'transparent.sty' is not loaded}%
    \renewcommand\transparent[1]{}%
  }%
  \providecommand\rotatebox[2]{#2}%
  \newcommand*\fsize{\dimexpr\f@size pt\relax}%
  \newcommand*\lineheight[1]{\fontsize{\fsize}{#1\fsize}\selectfont}%
  \ifx\svgwidth\undefined%
    \setlength{\unitlength}{147.25963923bp}%
    \ifx\svgscale\undefined%
      \relax%
    \else%
      \setlength{\unitlength}{\unitlength * \real{\svgscale}}%
    \fi%
  \else%
    \setlength{\unitlength}{\svgwidth}%
  \fi%
  \global\let\svgwidth\undefined%
  \global\let\svgscale\undefined%
  \makeatother%
  \begin{picture}(1,0.64106513)%
    \lineheight{1}%
    \setlength\tabcolsep{0pt}%
    \put(0,0){\includegraphics[width=\unitlength,page=1]{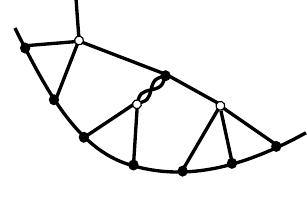}}%
    \put(-0.00120085,0.33580765){\color[rgb]{0,0,0}\rotatebox{-0.15281934}{\makebox(0,0)[lt]{\lineheight{0}\smash{\begin{tabular}[t]{l}$A_1$\end{tabular}}}}}%
    \put(0.25379387,0.04936907){\color[rgb]{0,0,0}\rotatebox{-0.15281934}{\makebox(0,0)[lt]{\lineheight{0}\smash{\begin{tabular}[t]{l}$A_2$\end{tabular}}}}}%
    \put(0.735319,0.00685128){\color[rgb]{0,0,0}\rotatebox{-0.15281934}{\makebox(0,0)[lt]{\lineheight{0}\smash{\begin{tabular}[t]{l}$A_3$\end{tabular}}}}}%
  \end{picture}%
\endgroup%

\end{center}
\end{lemma}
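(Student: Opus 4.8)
The plan is to reduce, by bilinearity of the FLL pairing, to the case where $\mathbf X$ is a single product of Pl\"ucker coordinates having $\Delta_I$ as one of its factors, and then to rule out the relevant labelings of $W$ by a purely local argument at the black vertex $b$.

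First I would write $\mathbf X=\Delta_I\cdot\mathbf X'$ with $\mathbf X'\in\C[\Gr(k,n)]_{\lambda-\delta^I}$ and expand $\mathbf X'$ as a $\C$-linear combination of products of Pl\"ucker coordinates (possible since the $\Delta_J$ generate $\C[\Gr(k,n)]$; for $\SL_3$ one may instead use the wrench relation together with the multiplicativity of superimposition, Remark \ref{rem:props of FP signs}). Distributing, $\mathbf X=\sum_m c_m\,\Delta_I\Delta_{J^m_1}\cdots\Delta_{J^m_{r-1}}$, with every summand having $\Delta_I$ as a factor, so it suffices to prove $\langle\mathbf W,\Delta_I\Delta_{J_1}\cdots\Delta_{J_{r-1}}\rangle=0$. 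By Proposition \ref{proposition:PairingAsCountingLabelings} this pairing equals $a(\mathcal S;W)$, where $\mathcal S$ is the boundary label condition dual to $(I,J_1,\ldots,J_{r-1})$; taking $I$ to be the first entry, the label $1\in[r]$ records membership in $I$, i.e.\ $1\in S(v)\iff v\in I$. It then remains to show that $W$ admits no consistent labeling with boundary condition $\mathcal S$.

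The key observation is that in any consistent labeling of an $\SL_r$ tensor diagram, the label sets on the edges incident to a fixed internal vertex $v$ form a set partition of $[r]$: their sizes sum to $r$ by condition \eqref{item:multiplicity-r}, and their union is $[r]$ by definition, which forces pairwise disjointness. Since $\lambda=(1^n)$, each boundary vertex has a unique incident edge, so the vertices of a fork class $A_\ell$ share a common internal neighbor $w_\ell$; we may assume the $w_\ell$ are pairwise distinct, as otherwise two of the sets $S(v_\ell)$ ($v_\ell\in I\cap A_\ell$), which must be parts of a single partition, would both contain $1$, and already no labeling exists. In the depicted configuration, among the parts of the partition at $w_\ell$ we find $\{S(v):v\in A_\ell\}$ together with the label $L(\mathbf e_\ell)$ of the edge $\mathbf e_\ell$ joining $w_\ell$ to $b$. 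Picking $v_\ell\in I\cap A_\ell$, which is nonempty by hypothesis, gives $1\in S(v_\ell)$, hence $1\notin L(\mathbf e_\ell)$, for every $\ell$. But $b$ is adjacent to no vertex other than $w_1,\dots,w_a$, so $\mathbf e_1,\dots,\mathbf e_a$ are exactly the edges at $b$, and the partition at $b$ yields $\bigcup_\ell L(\mathbf e_\ell)=[r]$, forcing $1\notin[r]$ --- a contradiction. Hence no such labeling exists and $\langle\mathbf W,\mathbf X\rangle=0$.

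The argument is short once the configuration is read correctly, so the only real care is in using the hypothesis precisely: $\mathbf e_1,\dots,\mathbf e_a$ must be genuinely all of the edges at $b$ --- this is what ``$b$ not adjacent to any other vertices'' provides, and it is exactly what forces the $L(\mathbf e_\ell)$ to exhaust $[r]$ --- and each $A_\ell$ must reach $b$ through an internal vertex $w_\ell\neq b$, so that the color $1$ is absorbed on an $A_\ell$-edge rather than on $\mathbf e_\ell$ itself. Everything else is the automatic disjointness of labels at internal vertices and the always-available Pl\"ucker expansion of $\mathbf X'$.
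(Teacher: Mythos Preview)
Your argument is correct and follows exactly the paper's approach: factor out $\Delta_I$, expand the remaining factor in Pl\"ucker monomials, and then observe that in any consistent labeling the color assigned to $\Delta_I$ cannot appear on any edge incident to the designated black vertex $b$, contradicting $\bigcup_\ell L(\mathbf e_\ell)=[r]$. One small slip: you invoke ``$\lambda=(1^n)$'' to get a unique edge at each boundary vertex, but the lemma allows general $\lambda$; the correct justification is that $W$ is dual semistandard (required for $\mathbf W\in\mathcal W_\lambda(\C^r)$), which already forces at most one edge at each boundary vertex.
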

\begin{proof}

Let $\mathbf{X} = \Delta_I \mathbf{X}'$ and arbitrary write $\mathbf{X}'$ as a sum of products of Pl{\"u}cker coordinates, $\mathbf{X} = \Delta_i \sum_\ell \pm \mathbf{X}'_\ell$. Any term $\Delta_i\mathbf{X}'_\ell$ gives an impossible coloring condition on $W$. This is because there would be no way to color an edge incident to the designated black vertex with the color assigned to $\Delta_I$. Therefore, $\langle \mathbf{W},\mathbf{X} \rangle = 0$.
\end{proof}

Combining the previous two results gives us a pairing which can be used on many pairs of web invariants for which Lemma \ref{lem: disjoint forks} does not apply. 

\begin{lemma}\label{lem:pairing0general} 
Let $\lambda = (1^{3r})$. Let $1\le b<a\le r$. Given web invariants $\mathbf{W}\in\C[\Gr(r,3r)]_\lambda$ and $\mathbf{X}\in\C[\Gr(3,3r)]_\lambda$, suppose there exists a cyclic interval of $a+2$ boundary vertices such that the webs $W$ and $X$ have the boundary conditions shown below. Then $\langle \mathbf{W},\mathbf{X}\rangle= \langle \mathbf{X},\mathbf{W}\rangle = 0$. 
    \begin{align*}
    W&=\adjustbox{valign = c}{\ \scalebox{1.4}{
\begingroup%
  \makeatletter%
  \providecommand\color[2][]{%
    \errmessage{(Inkscape) Color is used for the text in Inkscape, but the package 'color.sty' is not loaded}%
    \renewcommand\color[2][]{}%
  }%
  \providecommand\transparent[1]{%
    \errmessage{(Inkscape) Transparency is used (non-zero) for the text in Inkscape, but the package 'transparent.sty' is not loaded}%
    \renewcommand\transparent[1]{}%
  }%
  \providecommand\rotatebox[2]{#2}%
  \newcommand*\fsize{\dimexpr\f@size pt\relax}%
  \newcommand*\lineheight[1]{\fontsize{\fsize}{#1\fsize}\selectfont}%
  \ifx\svgwidth\undefined%
    \setlength{\unitlength}{157.68441028bp}%
    \ifx\svgscale\undefined%
      \relax%
    \else%
      \setlength{\unitlength}{\unitlength * \real{\svgscale}}%
    \fi%
  \else%
    \setlength{\unitlength}{\svgwidth}%
  \fi%
  \global\let\svgwidth\undefined%
  \global\let\svgscale\undefined%
  \makeatother%
  \begin{picture}(1,0.40026669)%
    \lineheight{1}%
    \setlength\tabcolsep{0pt}%
    \put(0,0){\includegraphics[width=\unitlength,page=1]{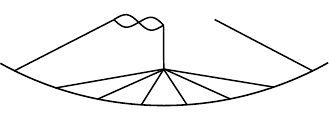}}%
    \put(0.48651777,0.19663894){\color[rgb]{0,0,0}\rotatebox{90.100147}{\makebox(0,0)[lt]{\lineheight{0}\smash{\begin{tabular}[t]{l}\tiny{$r-a$}\end{tabular}}}}}%
    \put(0.4151143,0.35419504){\color[rgb]{0,0,0}\rotatebox{-6.2034378}{\makebox(0,0)[lt]{\lineheight{0}\smash{\begin{tabular}[t]{l}\tiny{$b$}\end{tabular}}}}}%
    \put(0.52982195,0.34519807){\color[rgb]{0,0,0}\rotatebox{6.6554077}{\makebox(0,0)[lt]{\lineheight{0}\smash{\begin{tabular}[t]{l}\tiny{$a-b$}\end{tabular}}}}}%
    \put(0.48930095,0.00337571){\color[rgb]{0,0,0}\rotatebox{-0.15281934}{\makebox(0,0)[lt]{\lineheight{0}\smash{\begin{tabular}[t]{l}\tiny{$a$}\end{tabular}}}}}%
    \put(0,0){\includegraphics[width=\unitlength,page=2]{generallemma.1.pdf}}%
  \end{picture}%
\endgroup%
}}\\ 
    X&=\adjustbox{valign=c}{\ \scalebox{1.4}{
\begingroup%
  \makeatletter%
  \providecommand\color[2][]{%
    \errmessage{(Inkscape) Color is used for the text in Inkscape, but the package 'color.sty' is not loaded}%
    \renewcommand\color[2][]{}%
  }%
  \providecommand\transparent[1]{%
    \errmessage{(Inkscape) Transparency is used (non-zero) for the text in Inkscape, but the package 'transparent.sty' is not loaded}%
    \renewcommand\transparent[1]{}%
  }%
  \providecommand\rotatebox[2]{#2}%
  \newcommand*\fsize{\dimexpr\f@size pt\relax}%
  \newcommand*\lineheight[1]{\fontsize{\fsize}{#1\fsize}\selectfont}%
  \ifx\svgwidth\undefined%
    \setlength{\unitlength}{157.68441028bp}%
    \ifx\svgscale\undefined%
      \relax%
    \else%
      \setlength{\unitlength}{\unitlength * \real{\svgscale}}%
    \fi%
  \else%
    \setlength{\unitlength}{\svgwidth}%
  \fi%
  \global\let\svgwidth\undefined%
  \global\let\svgscale\undefined%
  \makeatother%
  \begin{picture}(1,0.34870378)%
    \lineheight{1}%
    \setlength\tabcolsep{0pt}%
    \put(0,0){\includegraphics[width=\unitlength,page=1]{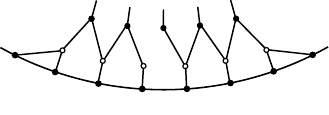}}%
    \put(0.4971908,-0.00811114){\color[rgb]{0,0,0}\rotatebox{-0.15281934}{\makebox(0,0)[lt]{\lineheight{0}\smash{\begin{tabular}[t]{l}\tiny{$a$}\end{tabular}}}}}%
    \put(0,0){\includegraphics[width=\unitlength,page=2]{generallemma.0v2.pdf}}%
  \end{picture}%
\endgroup%
}}    
    \end{align*}
\end{lemma}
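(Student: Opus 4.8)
The plan is to reduce the statement to a counting problem and then produce a local coloring obstruction, mirroring the proofs of Lemma~\ref{lem: disjoint forks} and Lemma~\ref{lem:DisconnectedConveniently}. By the symmetry of the FLL pairing (Remark~\ref{rmk:SymmetricFLL}) we have $\langle \mathbf{W},\mathbf{X}\rangle=\langle \mathbf{X},\mathbf{W}\rangle$, so it suffices to show $\langle \mathbf{W},\mathbf{X}\rangle=0$. Since the pairing is bilinear, I would expand $\mathbf{X}\in\C[\Gr(3,3r)]_\lambda$ as a $\Z$-linear combination $\mathbf{X}=\sum_\ell \pm\,\mathbf{X}_\ell$ of products of (three-element) Pl\"ucker coordinates, and by Proposition~\ref{proposition:PairingAsCountingLabelings} reduce to showing $a(\mathcal{S}_\ell;W)=0$ for every $\ell$, where $\mathcal{S}_\ell$ is the boundary labeling dual (Definition~\ref{def:I,S duality}) to the list of $3$-subsets occurring in $\mathbf{X}_\ell$. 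Because $\lambda=(1^{3r})$, this dual labeling is just an $r$-coloring of the $3r$ boundary vertices, each color used exactly three times.

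The key is to choose the expansion of $\mathbf{X}$ so that the fork structure of $X$ on the given cyclic interval is preserved. Applying Lemma~\ref{lemma: plu expansion fork}, iterated over the disjoint fork pieces of $X$ on this interval exactly as in the proof of Lemma~\ref{lem: disjoint forks}, yields an expansion in which every monomial $\mathbf{X}_\ell$ has Pl\"ucker factors whose supports cover the distinguished boundary vertices in the pattern forced by $X$'s local shape; in particular certain pairs (or triples) among those vertices are forced to lie in a common $3$-subset, hence to receive a common color in $\mathcal{S}_\ell$. On the other side, I would read off the constraint coming from the distinguished internal vertex of $W$: its incident edges carry multiplicities $b$, $a-b$, and $r-a$, and since these sum to $r$, in any consistent $\SL_r$-labeling the three corresponding label sets must be pairwise disjoint (by the argument that $\sum_{v\in\mathbf e}m(\mathbf e)=|[r]|=|\bigcup_{v\in\mathbf e}L(\mathbf e)|$, using condition~\eqref{item:multiplicity-r}). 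Propagating this through the two sub-forks of sizes $b$ and $a-b$ forces the $a$ boundary vertices adjacent through that vertex to receive $a$ \emph{distinct} colors, split into a size-$b$ color set and a disjoint size-$(a-b)$ color set. This is precisely where Lemma~\ref{lem: disjoint forks} and Lemma~\ref{lem:DisconnectedConveniently} are combined: the coincidence of colors forced by $X$ clashes with the distinctness (and disjointness across the two sub-forks) forced by $W$, so no consistent labeling exists and $a(\mathcal{S}_\ell;W)=0$. Summing over $\ell$ and invoking symmetry once more gives $\langle\mathbf{W},\mathbf{X}\rangle=\langle\mathbf{X},\mathbf{W}\rangle=0$.

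The main obstacle I anticipate is the bookkeeping in the middle step: one must check that Lemma~\ref{lemma: plu expansion fork} can be applied so as to preserve \emph{all} of the relevant forks of $X$ on the interval simultaneously (not merely one at a time), and then run the short but fiddly case analysis at $W$'s distinguished vertex -- keeping track of the two extra boundary vertices of the $(a+2)$-element interval and of how the parameters $b$ and $a$ interact with the multiplicity budget $b+(a-b)+(r-a)=r$ -- to confirm that the color pattern imposed by $X$ is genuinely incompatible in every case. Once that is verified, the conclusion follows term by term.
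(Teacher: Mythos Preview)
Your overall strategy---expand $\mathbf{X}$ into Pl\"ucker monomials, translate via Proposition~\ref{proposition:PairingAsCountingLabelings} into a coloring count, and then exhibit a local obstruction at $W$'s distinguished internal vertex by combining the disjoint-fork and isolated-claw mechanisms---matches the paper's in spirit and uses the same two ingredients (Lemmas~\ref{lem: disjoint forks} and~\ref{lem:DisconnectedConveniently}). The execution, however, is different, and the paper's route sidesteps exactly the obstacle you flag.

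Rather than expanding $\mathbf{X}$ all the way down to Pl\"ucker monomials and trying to preserve \emph{all} of the interval's forks at once (which, as you correctly note, Lemma~\ref{lemma: plu expansion fork} does not guarantee---it only preserves a single designated fork, and your reference to ``exactly as in the proof of Lemma~\ref{lem: disjoint forks}'' does not help, since that proof invokes Lemma~\ref{lemma: plu expansion fork} just once), the paper proceeds \emph{sequentially}. It applies a single wrench relation to the leftmost fork of $X$ on the interval, producing two tensor diagrams; one of them now shares a fork with $W$ and is killed immediately by Lemma~\ref{lem: disjoint forks}, while the other is carried forward. Repeating this step $a+1$ times, moving counterclockwise across the interval, one branch dies at each stage by Lemma~\ref{lem: disjoint forks}. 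The single diagram surviving all $a+1$ steps has an isolated tripod sitting over the $(a+2)$ boundary vertices, and Lemma~\ref{lem:DisconnectedConveniently} (with the $A_i$ coming from the sub-forks of $W$) finishes it off. No simultaneous-fork-preservation is ever needed, and no explicit case analysis on the multiplicities $b,\,a-b,\,r-a$ is required beyond recognizing which forks of $W$ are available at each step.

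Your approach could likely be completed, but it would require either a strengthened version of Lemma~\ref{lemma: plu expansion fork} preserving several forks simultaneously, or an ad hoc argument that the chain of forks in $X$ survives each reduction; the paper's one-wrench-at-a-time unzipping avoids this entirely.
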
 
\begin{proof}

We begin by applying a wrench relation on the leftmost fork in the drawn portion of $X$. If $X_1$ and $X_2$ are the resulting tensor diagrams, then we know $\mathbf{X}^{FP} = \mathbf{X}^{FP}_1 - \mathbf{X}^{FP}$ where $X_1$ is in the middle and $X_2$ is on the right.
\begin{center}
\begin{tikzpicture}
\node(0) at (0,0) {
\begingroup%
  \makeatletter%
  \providecommand\color[2][]{%
    \errmessage{(Inkscape) Color is used for the text in Inkscape, but the package 'color.sty' is not loaded}%
    \renewcommand\color[2][]{}%
  }%
  \providecommand\transparent[1]{%
    \errmessage{(Inkscape) Transparency is used (non-zero) for the text in Inkscape, but the package 'transparent.sty' is not loaded}%
    \renewcommand\transparent[1]{}%
  }%
  \providecommand\rotatebox[2]{#2}%
  \newcommand*\fsize{\dimexpr\f@size pt\relax}%
  \newcommand*\lineheight[1]{\fontsize{\fsize}{#1\fsize}\selectfont}%
  \ifx\svgwidth\undefined%
    \setlength{\unitlength}{137.19378374bp}%
    \ifx\svgscale\undefined%
      \relax%
    \else%
      \setlength{\unitlength}{\unitlength * \real{\svgscale}}%
    \fi%
  \else%
    \setlength{\unitlength}{\svgwidth}%
  \fi%
  \global\let\svgwidth\undefined%
  \global\let\svgscale\undefined%
  \makeatother%
  \begin{picture}(1,0.61675694)%
    \lineheight{1}%
    \setlength\tabcolsep{0pt}%
    \put(0,0){\includegraphics[width=\unitlength,page=1]{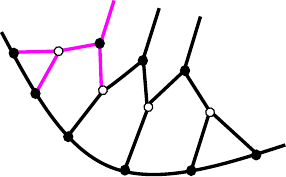}}%
  \end{picture}%
\endgroup%
};
\node(1) at (2.25,0) {$=$};
\node(2) at (4.5,0) {
\begingroup%
  \makeatletter%
  \providecommand\color[2][]{%
    \errmessage{(Inkscape) Color is used for the text in Inkscape, but the package 'color.sty' is not loaded}%
    \renewcommand\color[2][]{}%
  }%
  \providecommand\transparent[1]{%
    \errmessage{(Inkscape) Transparency is used (non-zero) for the text in Inkscape, but the package 'transparent.sty' is not loaded}%
    \renewcommand\transparent[1]{}%
  }%
  \providecommand\rotatebox[2]{#2}%
  \newcommand*\fsize{\dimexpr\f@size pt\relax}%
  \newcommand*\lineheight[1]{\fontsize{\fsize}{#1\fsize}\selectfont}%
  \ifx\svgwidth\undefined%
    \setlength{\unitlength}{137.19378374bp}%
    \ifx\svgscale\undefined%
      \relax%
    \else%
      \setlength{\unitlength}{\unitlength * \real{\svgscale}}%
    \fi%
  \else%
    \setlength{\unitlength}{\svgwidth}%
  \fi%
  \global\let\svgwidth\undefined%
  \global\let\svgscale\undefined%
  \makeatother%
  \begin{picture}(1,0.60734692)%
    \lineheight{1}%
    \setlength\tabcolsep{0pt}%
    \put(0,0){\includegraphics[width=\unitlength,page=1]{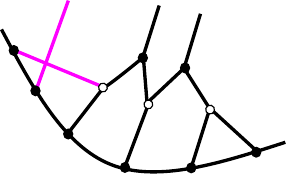}}%
  \end{picture}%
\endgroup%
};
\node(3) at (6.75,0) {$-$};
\node(4) at (9,0) {
\begingroup%
  \makeatletter%
  \providecommand\color[2][]{%
    \errmessage{(Inkscape) Color is used for the text in Inkscape, but the package 'color.sty' is not loaded}%
    \renewcommand\color[2][]{}%
  }%
  \providecommand\transparent[1]{%
    \errmessage{(Inkscape) Transparency is used (non-zero) for the text in Inkscape, but the package 'transparent.sty' is not loaded}%
    \renewcommand\transparent[1]{}%
  }%
  \providecommand\rotatebox[2]{#2}%
  \newcommand*\fsize{\dimexpr\f@size pt\relax}%
  \newcommand*\lineheight[1]{\fontsize{\fsize}{#1\fsize}\selectfont}%
  \ifx\svgwidth\undefined%
    \setlength{\unitlength}{137.19378374bp}%
    \ifx\svgscale\undefined%
      \relax%
    \else%
      \setlength{\unitlength}{\unitlength * \real{\svgscale}}%
    \fi%
  \else%
    \setlength{\unitlength}{\svgwidth}%
  \fi%
  \global\let\svgwidth\undefined%
  \global\let\svgscale\undefined%
  \makeatother%
  \begin{picture}(1,0.6488047)%
    \lineheight{1}%
    \setlength\tabcolsep{0pt}%
    \put(0,0){\includegraphics[width=\unitlength,page=1]{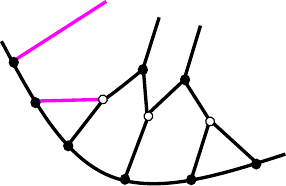}}%
  \end{picture}%
\endgroup%
};
\end{tikzpicture}
\end{center}

By Proposition \ref{prop:sign conversion}, Lemma \ref{lem: disjoint forks} and bilinearity, $\langle \mathbf{W},\mathbf{X}\rangle = \pm \langle \mathbf{W},\mathbf{X}^{FP}\rangle = \pm \langle \mathbf{W},\mathbf{X}_1^{FP}\rangle$. We proceed in this way counterclockwise across the boundary, using the wrench relation a total of $a+1$ times. After the final application, again one tensor diagram will give an invariant that pairs with $\mathbf{W}$ to 0 by Lemma \ref{lem: disjoint forks}. The other is as below, and the resulting invariant pairs to 0 with $\mathbf{W}$ by Lemma \ref{lem:DisconnectedConveniently}.
\begin{center}
\begingroup%
  \makeatletter%
  \providecommand\color[2][]{%
    \errmessage{(Inkscape) Color is used for the text in Inkscape, but the package 'color.sty' is not loaded}%
    \renewcommand\color[2][]{}%
  }%
  \providecommand\transparent[1]{%
    \errmessage{(Inkscape) Transparency is used (non-zero) for the text in Inkscape, but the package 'transparent.sty' is not loaded}%
    \renewcommand\transparent[1]{}%
  }%
  \providecommand\rotatebox[2]{#2}%
  \newcommand*\fsize{\dimexpr\f@size pt\relax}%
  \newcommand*\lineheight[1]{\fontsize{\fsize}{#1\fsize}\selectfont}%
  \ifx\svgwidth\undefined%
    \setlength{\unitlength}{155.10486152bp}%
    \ifx\svgscale\undefined%
      \relax%
    \else%
      \setlength{\unitlength}{\unitlength * \real{\svgscale}}%
    \fi%
  \else%
    \setlength{\unitlength}{\svgwidth}%
  \fi%
  \global\let\svgwidth\undefined%
  \global\let\svgscale\undefined%
  \makeatother%
  \begin{picture}(1,0.52303101)%
    \lineheight{1}%
    \setlength\tabcolsep{0pt}%
    \put(0,0){\includegraphics[width=\unitlength,page=1]{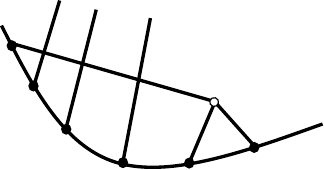}}%
  \end{picture}%
\endgroup%

\end{center}
\end{proof}
Using these lemmas and a few other coloring arguments, we give a list of specific boundary configurations which pair to 0. Each of these configurations appear multiple times when evaluating the pairings between $\SL_3$ and $\SL_4$ basis webs in the proof of Theorem \ref{thm:DualityComputation}.

\begin{lemma}\label{lem:pairing0r=4k=3}
Suppose $\mathbf{W}$ is an $\SL_4$ web invariant and $\mathbf{X}$ is an $\SL_3$ web invariant such that, for a fixed cyclic interval of boundary vertices and one item below, $W$ contains a boundary configuration as on the left and $X$ contains a boundary configuration as on the right. Then $\langle \mathbf{W},\mathbf{X} \rangle = \langle \mathbf{X},\mathbf{W} \rangle = 0$. The same can be said about reflections of any of these items. 

\newcounter{rowcntr}
\renewcommand{\therowcntr}{\alph{rowcntr}}

\newcolumntype{A}{>{\refstepcounter{rowcntr}(\therowcntr)}c}

\AtBeginEnvironment{tabular}{\setcounter{rowcntr}{0}}

\begin{center}
\begin{tabular}{A c}
    \label{lem'(a)}&$\left\langle \adjustbox{valign=c}{\ \includegraphics[scale=.75]{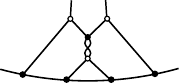}\ \ ,\ \ \includegraphics[scale=.75]{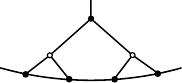}\ } \right\rangle$
\\
    \label{lem'(b)}&$\left\langle \adjustbox{valign=c}{\ \includegraphics[scale=.75]{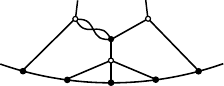}\ \ ,\ \ \includegraphics[scale=.75]{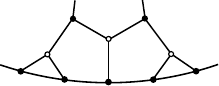}\ } \right\rangle$
\\
    \label{lem'(c)}&$\left\langle \adjustbox{valign=c}{\ \includegraphics[scale=.75]{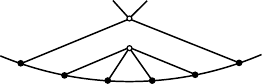}\ \ ,\ \ \includegraphics[scale=.75]{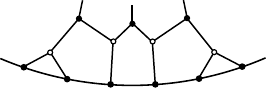}\ } \right\rangle$
 \\
    \label{lem'(e)}&$\left\langle \adjustbox{valign=c}{\ \includegraphics[scale=.75]{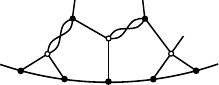}\ \ ,\ \ \includegraphics[scale=.75]{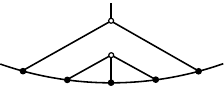}\ } \right\rangle$
\\
    \label{lem'(h)}&$\left\langle \adjustbox{valign=c}{\ \includegraphics[scale=.75]{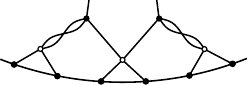}\ \ ,\ \ \includegraphics[scale=.75]{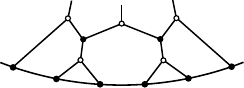}\ } \right\rangle$
\\
    \label{lem'(j)}&$\left\langle \adjustbox{valign=c}{\ \includegraphics[scale=.75]{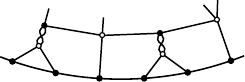}\ \ ,\ \ \includegraphics[scale=.75]{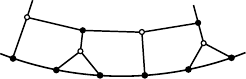}\ } \right\rangle$
\\
    \label{lem'(k)}&$\left\langle \adjustbox{valign=c}{\ \includegraphics[scale=.75]{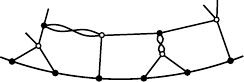}\ \ ,\ \ \includegraphics[scale=.75]{pairings0graphics/pairing7.1.pdf}\ } \right\rangle$
\\
    \label{lem'(d)}&$\left\langle \adjustbox{valign=c}{\ \includegraphics[scale=.75]{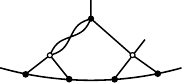}\ \ ,\ \ \includegraphics[scale=.75]{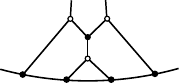}\ } \right\rangle$
\\
    \label{lem'(i)}&$\left\langle \adjustbox{valign=c}{\ \includegraphics[scale=.75]{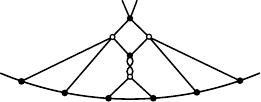}\ \ ,\ \ \includegraphics[scale=.75]{pairings0graphics/pairing6.1.pdf}\ } \right\rangle$
\\
    \label{lem'(g)}&$\left\langle \adjustbox{valign=c}{\ \includegraphics[scale=.75]{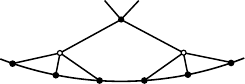}\ \ ,\ \ \includegraphics[scale=.75]{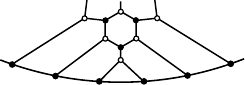}\ } \right\rangle$
\\
    \label{lem'(f)}&$\left\langle \adjustbox{valign=c}{\ \includegraphics[scale=.75]{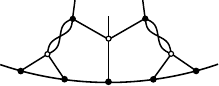}\ \ ,\ \ \includegraphics[scale=.75]{pairings0graphics/pairing4.1.pdf}\ } \right\rangle$
\\
    \label{lem'(l)}&$\left\langle \adjustbox{valign=c}{\ \includegraphics[scale=.75]{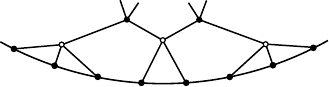}\ \ ,\ \ \includegraphics[scale=.75]{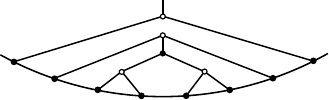}\ } \right\rangle$
\\
    \label{lem'(m)}&$\left\langle \adjustbox{valign=c}{\ \includegraphics[scale=.75]{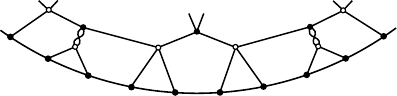}\ \ ,\ \ \includegraphics[scale=.75]{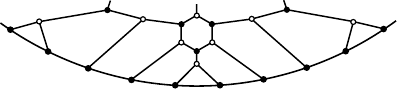}\ } \right\rangle$
\end{tabular}
\end{center}
\end{lemma}

\begin{proof}
We begin by noting that the symmetry in the pairing follows from Remark \ref{rmk:SymmetricFLL}. The fact that these are preserved by reflections follows from the symmetry in the arguments used. So, we will focus on the explicit pairings given in the list.

Items \eqref{lem'(a)}, \eqref{lem'(b)}, and \eqref{lem'(c)} follow immediately from Lemma \ref{lem:pairing0general}. 
Item \eqref{lem'(e)} is an immediate consequence of Lemma \ref{lem:DisconnectedConveniently}. 
Items \eqref{lem'(h)}, \eqref{lem'(j)}, and \eqref{lem'(k)} can be shown by applying repeated skein relations (on both $W$ and $X$) and invoking Lemmas \ref{lem: disjoint forks} and \ref{lem:DisconnectedConveniently}. 

To show Item \eqref{lem'(d)}, we begin by applying a wrench relation on $X$. This along with Proposition \ref{prop:sign conversion} gives us (from left-to-right below) $\mathbf{X} = \pm \mathbf{X}_1 \pm\mathbf{X}_2$. By Lemma \ref{lem: disjoint forks} and bilinearity, $\langle \mathbf{W},\mathbf{X}\rangle = \pm \langle \mathbf{W},\mathbf{X}_1\rangle$.

\begin{center}
\begin{tikzpicture}
    \node(0) at (0,0) {\scalebox{0.75}{
\begingroup%
  \makeatletter%
  \providecommand\color[2][]{%
    \errmessage{(Inkscape) Color is used for the text in Inkscape, but the package 'color.sty' is not loaded}%
    \renewcommand\color[2][]{}%
  }%
  \providecommand\transparent[1]{%
    \errmessage{(Inkscape) Transparency is used (non-zero) for the text in Inkscape, but the package 'transparent.sty' is not loaded}%
    \renewcommand\transparent[1]{}%
  }%
  \providecommand\rotatebox[2]{#2}%
  \newcommand*\fsize{\dimexpr\f@size pt\relax}%
  \newcommand*\lineheight[1]{\fontsize{\fsize}{#1\fsize}\selectfont}%
  \ifx\svgwidth\undefined%
    \setlength{\unitlength}{137.09623682bp}%
    \ifx\svgscale\undefined%
      \relax%
    \else%
      \setlength{\unitlength}{\unitlength * \real{\svgscale}}%
    \fi%
  \else%
    \setlength{\unitlength}{\svgwidth}%
  \fi%
  \global\let\svgwidth\undefined%
  \global\let\svgscale\undefined%
  \makeatother%
  \begin{picture}(1,0.7530862)%
    \lineheight{1}%
    \setlength\tabcolsep{0pt}%
    \put(0,0){\includegraphics[width=\unitlength,page=1]{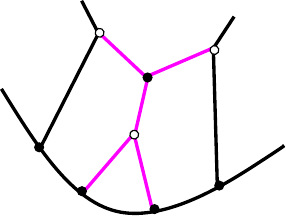}}%
  \end{picture}%
\endgroup%
}};    
    \node(2) at (2,0) {$=$};
    \node(3) at (4,0) {\scalebox{0.75}{
\begingroup%
  \makeatletter%
  \providecommand\color[2][]{%
    \errmessage{(Inkscape) Color is used for the text in Inkscape, but the package 'color.sty' is not loaded}%
    \renewcommand\color[2][]{}%
  }%
  \providecommand\transparent[1]{%
    \errmessage{(Inkscape) Transparency is used (non-zero) for the text in Inkscape, but the package 'transparent.sty' is not loaded}%
    \renewcommand\transparent[1]{}%
  }%
  \providecommand\rotatebox[2]{#2}%
  \newcommand*\fsize{\dimexpr\f@size pt\relax}%
  \newcommand*\lineheight[1]{\fontsize{\fsize}{#1\fsize}\selectfont}%
  \ifx\svgwidth\undefined%
    \setlength{\unitlength}{137.09623682bp}%
    \ifx\svgscale\undefined%
      \relax%
    \else%
      \setlength{\unitlength}{\unitlength * \real{\svgscale}}%
    \fi%
  \else%
    \setlength{\unitlength}{\svgwidth}%
  \fi%
  \global\let\svgwidth\undefined%
  \global\let\svgscale\undefined%
  \makeatother%
  \begin{picture}(1,0.82571581)%
    \lineheight{1}%
    \setlength\tabcolsep{0pt}%
    \put(0,0){\includegraphics[width=\unitlength,page=1]{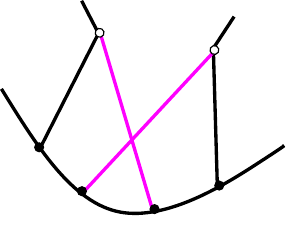}}%
    \put(0.50080519,0.00767776){\color[rgb]{0,0,0}\rotatebox{-0.15281934}{\makebox(0,0)[lt]{\lineheight{0}\smash{\begin{tabular}[t]{l}$j_1$\end{tabular}}}}}%
    \put(0.75312704,0.08994097){\color[rgb]{0,0,0}\rotatebox{-0.15281934}{\makebox(0,0)[lt]{\lineheight{0}\smash{\begin{tabular}[t]{l}$j_2$\end{tabular}}}}}%
    \put(0.204993,0.06292126){\color[rgb]{0,0,0}\rotatebox{-0.15281934}{\makebox(0,0)[lt]{\lineheight{0}\smash{\begin{tabular}[t]{l}$i_2$\end{tabular}}}}}%
    \put(0.02938483,0.22616899){\color[rgb]{0,0,0}\rotatebox{-0.15281934}{\makebox(0,0)[lt]{\lineheight{0}\smash{\begin{tabular}[t]{l}$i_1$\end{tabular}}}}}%
  \end{picture}%
\endgroup%
}};    
    \node(4) at (6,0) {$-$};
    \node(5) at (8,0) {\scalebox{0.75}{
\begingroup%
  \makeatletter%
  \providecommand\color[2][]{%
    \errmessage{(Inkscape) Color is used for the text in Inkscape, but the package 'color.sty' is not loaded}%
    \renewcommand\color[2][]{}%
  }%
  \providecommand\transparent[1]{%
    \errmessage{(Inkscape) Transparency is used (non-zero) for the text in Inkscape, but the package 'transparent.sty' is not loaded}%
    \renewcommand\transparent[1]{}%
  }%
  \providecommand\rotatebox[2]{#2}%
  \newcommand*\fsize{\dimexpr\f@size pt\relax}%
  \newcommand*\lineheight[1]{\fontsize{\fsize}{#1\fsize}\selectfont}%
  \ifx\svgwidth\undefined%
    \setlength{\unitlength}{137.09623682bp}%
    \ifx\svgscale\undefined%
      \relax%
    \else%
      \setlength{\unitlength}{\unitlength * \real{\svgscale}}%
    \fi%
  \else%
    \setlength{\unitlength}{\svgwidth}%
  \fi%
  \global\let\svgwidth\undefined%
  \global\let\svgscale\undefined%
  \makeatother%
  \begin{picture}(1,0.82571581)%
    \lineheight{1}%
    \setlength\tabcolsep{0pt}%
    \put(0,0){\includegraphics[width=\unitlength,page=1]{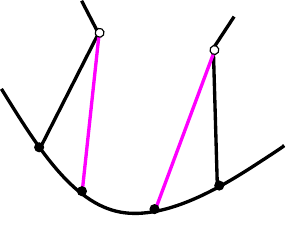}}%
    \put(0.50322023,0.00767776){\color[rgb]{0,0,0}\rotatebox{-0.15281934}{\makebox(0,0)[lt]{\lineheight{0}\smash{\begin{tabular}[t]{l}$j_1$\end{tabular}}}}}%
    \put(0.75312704,0.08028075){\color[rgb]{0,0,0}\rotatebox{-0.15281934}{\makebox(0,0)[lt]{\lineheight{0}\smash{\begin{tabular}[t]{l}$j_2$\end{tabular}}}}}%
    \put(0.20982311,0.07982662){\color[rgb]{0,0,0}\rotatebox{-0.15281934}{\makebox(0,0)[lt]{\lineheight{0}\smash{\begin{tabular}[t]{l}$i_2$\end{tabular}}}}}%
    \put(0.03421494,0.24307435){\color[rgb]{0,0,0}\rotatebox{-0.15281934}{\makebox(0,0)[lt]{\lineheight{0}\smash{\begin{tabular}[t]{l}$i_1$\end{tabular}}}}}%
  \end{picture}%
\endgroup%
}};    
\end{tikzpicture}
\end{center}

The tensor diagram $X_1$ has two forks. Call these $(i_1,j_1)$ and $(i_2,j_2)$. As in the proof of Lemma \ref{lemma: plu expansion fork}, reduce $X_1$ using wrench relations until in all resulting diagrams, $(i_1,j_1)$ is contained in a claw, $(i_1,j_1,\ell)$. We can do this in a way such that, in each resulting tensor diagram, either $(i_2,j_2)$ is in a fork or $\ell \in \{i_2,j_2\}$. For any tensor diagrams in the former set, we can ignore the claw $(i_1,j_1,\ell)$ and repeat the process with respect to the fork $(i_2,j_2)$. The end result is an expansion $\mathbf{X}_1 = \pm\mathbf{Y}_1 \pm \cdots \pm \mathbf{Y}_m$ such that each $Y_i$ is a product of Pl\"ucker coordinates and either $(i_1,j_1)$ and $(i_2,j_2)$ are both in a fork or three of the four boundary nodes $\{i_1,j_1,i_2,j_2\}$ are in a claw. If $Y_i$ is described by the former, we can observe that $\langle \mathbf{W},\mathbf{Y}_i \rangle = 0$ with a coloring argument. If $Y_i$ is described by the latter, $\langle \mathbf{W},\mathbf{Y}_i \rangle$ is immediately 0 by Lemma \ref{lem: disjoint forks}. 

Items \eqref{lem'(i)}  and \eqref{lem'(g)} can be shown using a similar tactic as was used for Item \eqref{lem'(d)}.

To prove Item \eqref{lem'(f)}, we use Lemma \ref{lemma: plu expansion fork} to rewrite $\mathbf{X}$ as a sum of products of Pl{\"u}ckers where the fork between the extreme vertices is preserved. Then, the pairing is shown to be 0 by a coloring argument. 

To show Items \eqref{lem'(l)} and \eqref{lem'(m)}, as in many previous computations, we use $\SL_3$ skein relations and Lemma \ref{lem: disjoint forks} until the remaining tensor invariant which possibly could have nonzero pairing with $\mathbf{W}$ has a diagram with several claws. Then, the statement holds by showing that these claws present an impossible coloring condition on $W$.
\end{proof}

We now illustrate how these lemmas can be combined to show that $\B^*$ and $\B$ are dual bases with respect to the FLL pairing. 

\begin{proof}[Proof of Theorem \ref{thm:DualityComputation}]
We begin by fixing an element $\mathbf{W}_i\in\B^*$ and evaluate $\langle \mathbf{W}_i, \rho^\ell(\mathbf{X}_j) \rangle$ for all distinct rotations for each $X_j \in B$.  The fact that it suffices to consider only one $\SL_4$ web in each rotation orbit follows from  Lemma \ref{lem:RotationBothWebs} and the fact that promotion and transpose commute, i.e., for all tableaux $S$, we have $(\rho(S))^t = \rho(S^t)$.
We claim that $\langle \mathbf{W}_i, \mathbf{X}_i \rangle = \sign(X_i)$, and $\langle \mathbf{W}_i, \rho^\ell(\mathbf{X}_j) \rangle = 0$ for all other cases.  Moreover, we observe the corresponding tableaux of $W_i$ and $X_i$ are the transpose of one another.

We include a set of representative computations. 
For each $\mathbf{W}_i$, we begin by using Lemma \ref{lem: disjoint forks} to eliminate a set of webs in $\B$ whose web invariants will evaluate to 0 when paired with $\mathbf{W}_i$. The presence of a fork between vertices $\ell$ and $\ell+1$ in web $X$ is equivalent to $\ell$ being in the descent set of the row word of the corresponding standard Young tableau \cite[Lemma 5.42]{GPPSS24}. Therefore, we can analyze descent sets using SageMath \cite{sage} and identify these webs $\rho^\ell(X_j)$ which have $\langle \mathbf{W}_i, \rho^\ell(\mathbf{X}_j) \rangle = 0$ by Lemma \ref{lem: disjoint forks}. 

For example, in the case of $W_{27}$ this process eliminates all basis webs except  $X_{27}$.  
Next, we must check that $\langle \mathbf{W}_{27},\mathbf{X}_{27} \rangle = \sign(X_{27})$, or equivalently,  $\langle \mathbf{W}_{27},\mathbf{X}_{27}^{\text{FP}} \rangle = 1$.  We use skein relations to write $\mathbf{X}_{27}^{\text{FP}}$ in terms of Pl\"ucker coordinates, 
\[\mathbf{X}_{27}^{\text{FP}} = (\Delta_{1,2,4}\Delta_{3,5,12} - \Delta_{1,2,3}\Delta_{4,5,12})(\Delta_{6,8,11}\Delta_{6,7,9} - \Delta_{6,7,11}\Delta_{8,9,10}).
\]
Using the bilinearity of the inner product, we have
\begin{align*}
\langle \mathbf{W}_{27},  \mathbf{X}_{27}^{\text{FP}} \rangle &= \langle \mathbf{W}_{27}, (\Delta_{1,2,4}\Delta_{3,5,12} - \Delta_{1,2,3}\Delta_{4,5,12})(\Delta_{6,8,11}\Delta_{7,9,10} - \Delta_{6,7,11}\Delta_{8,9,10}) \rangle \\
&= \langle \mathbf{W}_{27}, \Delta_{1,2,4}\Delta_{3,5,12}\Delta_{6,8,11}\Delta_{7,9,10}\rangle - \langle \mathbf{W}_{27}, \Delta_{1,2,4}\Delta_{3,5,12}\Delta_{6,7,11}\Delta_{8,9,10}\rangle \\
&- \langle \mathbf{W}_{27}, \Delta_{1,2,3}\Delta_{4,5,12}\Delta_{6,8,11}\Delta_{7,9,10}\rangle + \langle \mathbf{W}_{27}, \Delta_{1,2,3}\Delta_{4,5,12}\Delta_{6,7,11}\Delta_{8,9,10}\rangle. \\
\end{align*}
Any term with $\Delta_{1,2,3}$ will fail to properly color $W_{27}$ since the vertices $2$ and $3$ form a fork. Similarly, any term with $\Delta_{8,9,10}$ will fail to properly color $W_{27}$ as the vertices $8$ and $9$ form a fork. Hence, it suffices to check the inner product with the term $\Delta_{1,2,4}\Delta_{3,5,12}\Delta_{6,8,11}\Delta_{7,9,10}$. As shown below, there is a unique coloring of $W_{27}$ with this term. By Proposition \ref{proposition:PairingAsCountingLabelings}, we have that $\langle \mathbf{W}_{27}, \mathbf{X}_{27}^{\text{FP}}\rangle = \langle \mathbf{W}_{27}, \Delta_{1,2,4}\Delta_{3,5,12}\Delta_{6,8,11}\Delta_{7,9,10} \rangle = 1$. 

 $$\left\langle \adjustbox{valign=c}{\ \includegraphics[scale=.5]{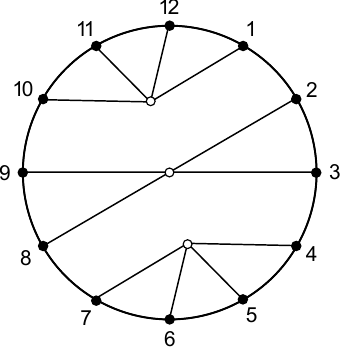}\ \ ,\ \ \includegraphics[scale=.5]{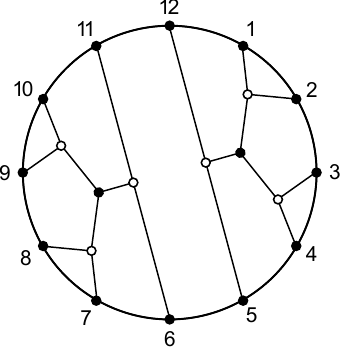}\ } \right\rangle
    \qquad = \left\langle \adjustbox{valign=c}{\ \includegraphics[scale=.5]{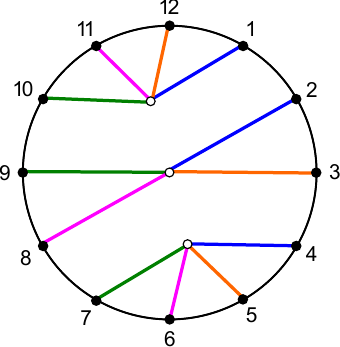}\ \ ,\ \ \includegraphics[scale=.5]{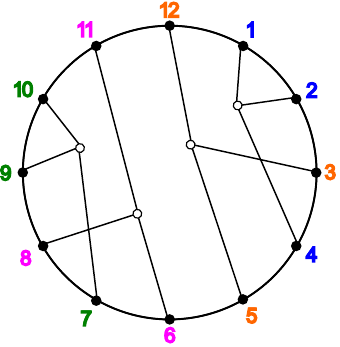}\ } \right\rangle
    \qquad $$
    
In general, for a web $W_i$, many but not necessarily all webs $\rho^\ell(X_j) \neq X_i$ are covered by Lemma \ref{lem: disjoint forks}. For instance,  when we perform the same process in SageMath \cite{sage} for web  $W_2$, we have 26 webs remaining whose web invariants which could pair nontrivially with $\mathbf{W}_2$. These are drawn below, where the web on the top left is $X_2$. 

\begin{table}[H]
    \centering
    \begin{tabular}{cccccc}
\includegraphics[scale = 0.4]{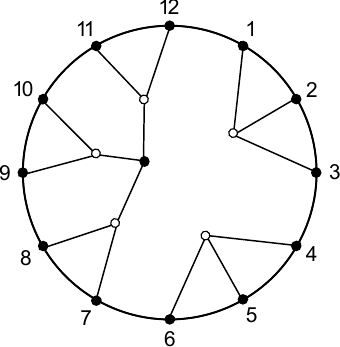} & 
\includegraphics[scale =0.4]{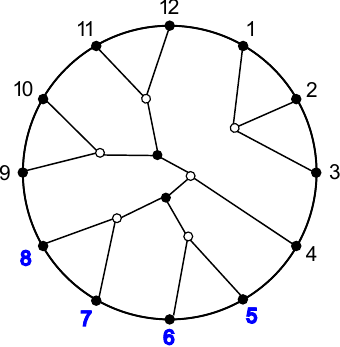} & 
 \includegraphics[scale = 0.4]{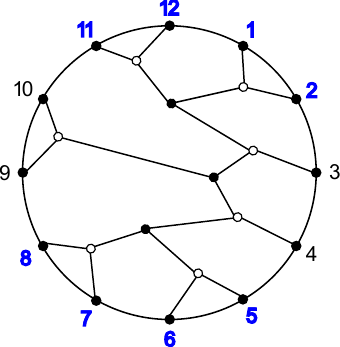} &
\includegraphics[scale = 0.4]{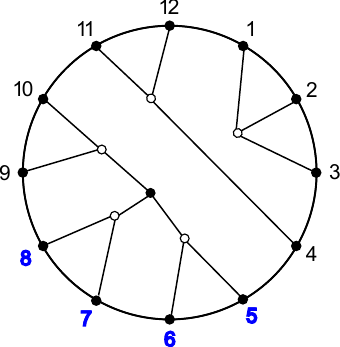} & 
 \includegraphics[scale = 0.4]{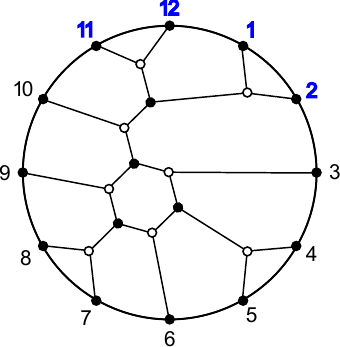} &
 \includegraphics[scale = 0.4]{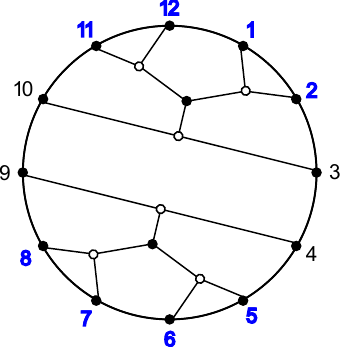} \\

\includegraphics[scale = 0.4]{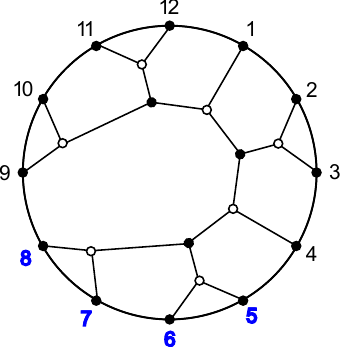} & 
\includegraphics[scale = 0.4]{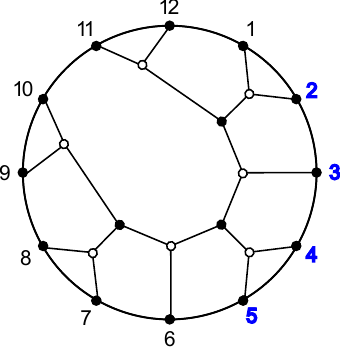} &
\includegraphics[scale = 0.4]{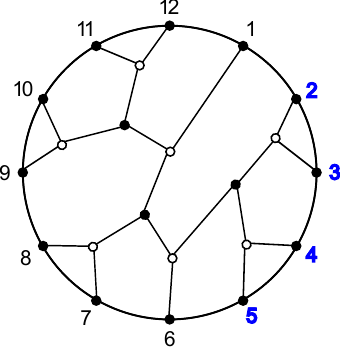} & \includegraphics[scale = 0.4]{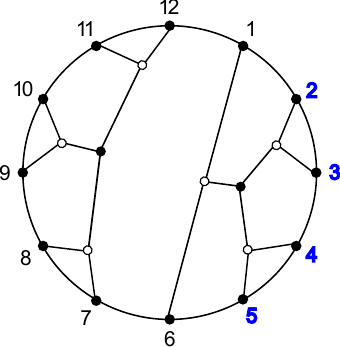} &
\includegraphics[scale = 0.4]{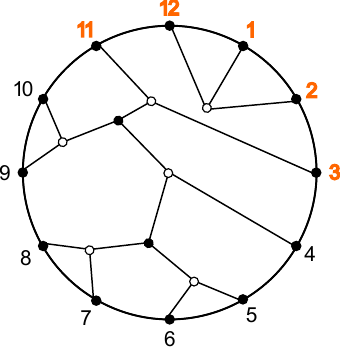} & 
\includegraphics[scale = 0.4]{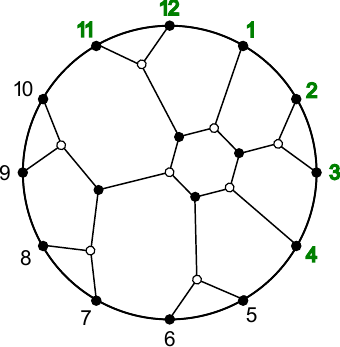} \\

\includegraphics[scale = 0.4]{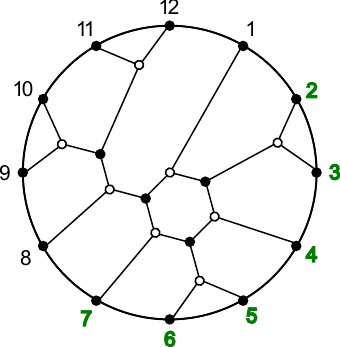} &
\includegraphics[scale = 0.4]{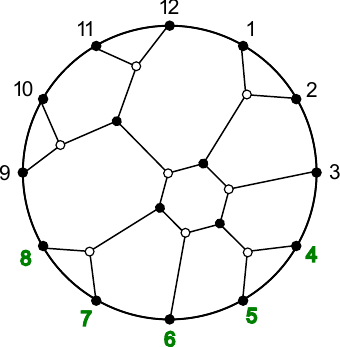} &
\includegraphics[scale = 0.4]{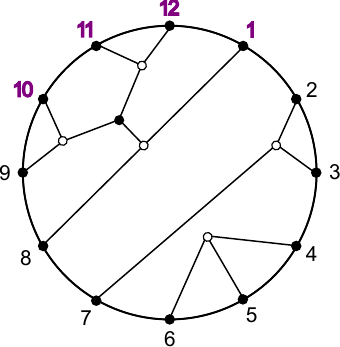}   &
\includegraphics[scale = 0.4]{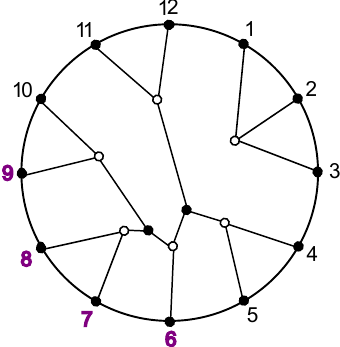}  & 
\includegraphics[scale = 0.4]{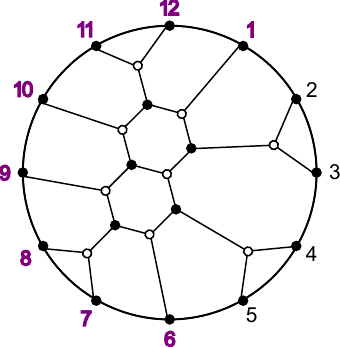} & 
\includegraphics[scale = 0.4]{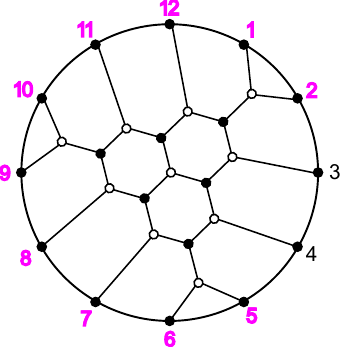} \\
    \end{tabular}
    \caption{All $\SL_3$ basis webs for which we cannot use Lemmas \ref{lem: disjoint forks}, \ref{lem:DisconnectedConveniently}, \ref{lem:pairing0general}, or \ref{lem:pairing0r=4k=3} when pairing with $W_2$.}
    \label{tab:my_label}
\end{table}

We may apply Lemma \ref{lem:pairing0r=4k=3}(a) to the webs with blue vertices to demonstrate that $\mathbf{W}_2$ pairs trivially with these webs. Note the blue vertices indicate a boundary configuration where this lemma can be applied. We similarly use item (d) of the same lemma on the web with orange vertices, item (f) on the webs with green vertices, item (h) on the webs with purple vertices,  and item (m) on the web with pink vertices. 
None of these lemmas applied to web $X_2$. In a similar manner to previously, we can compute $\langle \mathbf{W}_2,\mathbf{X}_2 \rangle = \sign(X_2)$. 

 Finally, we note that there are pairs $(W_i,\rho^\ell(X_j))$ with $\rho^\ell(X_j) \neq X_i$ which are not covered by our Lemmas. An example occurs when pairing $\mathbf{W}_1$ with $\rho^3(\mathbf{X}_{30})$. As in the proof of Lemma \ref{lem:DisconnectedConveniently}, we apply a series of wrench relations until we have written $\rho^3(\mathbf{X}_{30})$ as a linear combination of products of Pl{\"u}cker coordinates. Many of the resulting products pair to 0 with $\mathbf{W}_1$ by one of the previous lemmas. For the remaining products, we can verify that they still provide an impossible coloring condition for $W_1$.
 
Finally, consider $\mathbf{W}_{16}$, which is the difference of two web invariants. Let these webs be $W^{ben}$ and $W^{dis}$ which are the connected and disconnected webs respectively. Lemma \ref{lem: disjoint forks} guarantees that $\langle \mathbf{W}^{ben},\mathbf{X} \rangle = \langle \mathbf{W}^{dis},\mathbf{X}\rangle = 0$ for all but six $\SL_3$ basis webs; these six are drawn below. 

\begin{center}
\begin{tabular}{ccc}
 \includegraphics[width = 1.25 in]{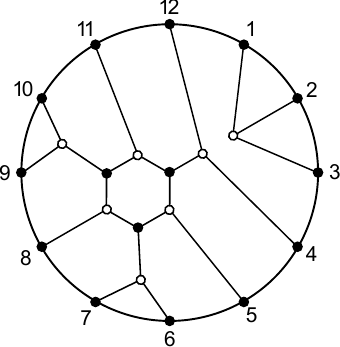} & \includegraphics[width = 1.25 in]{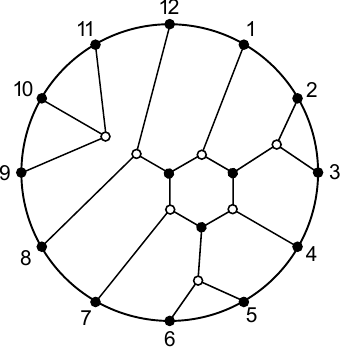}  & \includegraphics[width = 1.25 in]{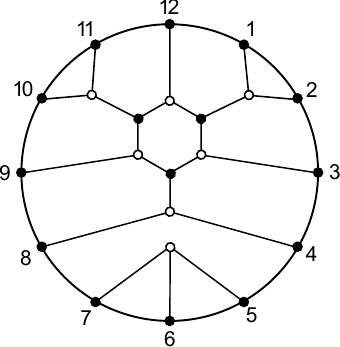}  \\
  \includegraphics[width = 1.25 in]{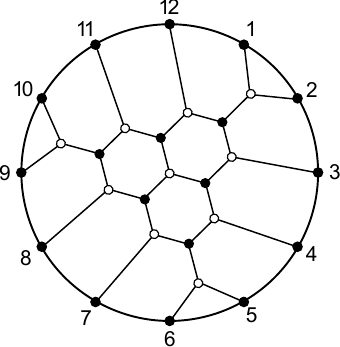}   & \includegraphics[width = 1.25 in]{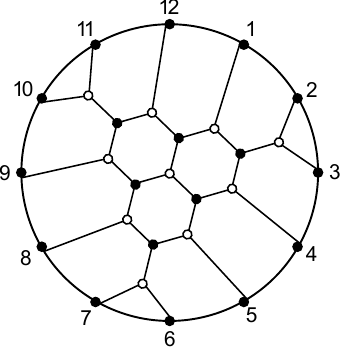}  & \includegraphics[width = 1.25 in]{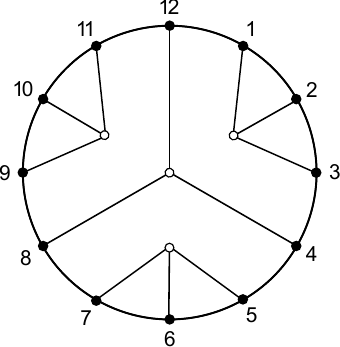}
\end{tabular}
\end{center}

The invariants associated to all three rotations of $X_{20}$ pair to 0 with $\mathbf{W}^{dis}$ again by Lemma \ref{lem: disjoint forks} and pair to 0 with $\mathbf{W}^{ben}$ by Lemma \ref{lem:pairing0general} with $r = 4, a = 3, b = 2$. Indeed, since $W^{ben}$ and $W^{dis}$ are invariant under $2\pi/3$ rotation, we need to only check the pairing with $\mathbf{X}_{20}$ by Lemma \ref{lem:RotationBothWebs}. 
By using $\SL_3$ skein relations and Proposition \ref{proposition:PairingAsCountingLabelings}, we can compute $\langle \mathbf{W}^{ben}, \mathbf{X}_{32} \rangle = \langle \mathbf{W}^{dis}, \mathbf{X}_{32} \rangle = 0$ and $\langle \mathbf{W}^{ben}, \rho^3(\mathbf{X}_{32}) \rangle = \langle \mathbf{W}^{dis}, \rho^3(\mathbf{X}_{32}) \rangle = 1$ implying $\langle \mathbf{W}_{16},\mathbf{X}_{32} \rangle = 0$, and similarly for $\rho^3(\mathbf{X}_{32})$. Finally, by Proposition \ref{proposition:PairingAsCountingLabelings}, it is immediate that $\langle \mathbf{W}^{ben}, \mathbf{X}_{16} \rangle = 1$ and $\langle \mathbf{W}^{dis}, \mathbf{X}_{16} \rangle = 0$ so that  $\langle \mathbf{W}_{16},\mathbf{X}_{16} \rangle = \sign(X_{16}) = 1$, as desired.
\end{proof}

For all but four of the 462 basis webs, our dual basis $\B^*$ for $\C[\Gr(4,12)]_\lambda$ coincides with the basis given by the growth algorithm of \cite[Sec.\ 5]{GPPSS24}. The exceptional cases are the four rotations of the highly symmetric $\SL_3$ web $X_{16}=\Delta_{1,2,3}\Delta_{5,6,7}\Delta_{9,10,11}\Delta_{4,8,12}$.

Under the bijection of \cite{GPPSS24}, the transposed tableau $T(X_{16})^t$ corresponds to a web with a hexagonal ``benzene face'' where the edges alternate between single and double edges (the associated invariant web is one of the summands of $\mathbf{W}_{16}$). As explained in \cite[Sec.\ 6]{GPPSS24}, tableau $T(X_{16})^t$ in fact corresponds to the entire benzene-move-equivalence class of this web, where \emph{benzene moves} interchange the single and double edges bounding the hexagon. The authors of \cite{GPPSS24} break this symmetry by choosing the \emph{top} representative (maximal among the benzene-move-equivalence class). However, this is not a reflection-invariant choice, losing a property for $k=4$ that held for web bases when $k=2$ or $k=3$. By setting $\mathbf{W}_{16} = \mathbf{X}_{16}^*$ to be the difference of two webs one can obtain a basis which is both rotation- and reflection-invariant, as $W_{16}$ and its reflection in fact yield the same web invariant.

As an application of Theorems \ref{thm:TwistGivenBySumOfPairings} and \ref{thm:DualityComputation}, we obtain combinatorial expansion formulas for twists of functions in $\C[\Gr(3,12)]_\lambda$ and $\C[\Gr(4,12)]_\lambda$ ($\lambda=(1^{12})$) in terms of higher dimer covers.
In general, given a $\SL_r$-web $D$ and a web basis $\mathcal{W}$, let $\{C_W^D\}_{W \in \mathcal{W}}$ denote the coefficients of the expansion of $\mathbf{D}$ in terms of $\mathcal{W}$, i.e. $\mathbf{D} = \sum_{W \in \mathcal{W}} C_W^D \mathbf{W}.$
\begin{theorem}
\label{thm:twist}
Let $n = 12$, $k \in \{3,4\}$, and $\lambda=(1^{12})$. Let $\mathbf{Y} \in \C[\Gr(k,12)]_\lambda$ be the web invariant associated with an $\SL_k$-basis web $Y$.
Let $G$ be a reduced top cell plabic graph of type $(k,n)$. Then, we can express 
$\tau(\mathbf{Y}) = \sign(Y) \sum_{D \in \mathcal{D}_r(G)} C_{Y^*}^D \fwt_G(D)$,
where $Y^*$ is the\footnote{When $Y = X_{16}$, which we refer to as the ``Benzene case," the dual web $Y^*$ is actually difference of two webs. The expansion formula still holds - taking either Benzene representative.}  dual basis web associated to $Y$. 
\end{theorem}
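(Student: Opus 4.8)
The plan is to combine the two main theorems directly. Recall that Theorem~\ref{thm:TwistGivenBySumOfPairings} gives, for any $f\in\C[\Gr(k,n)]_\lambda$ with $|\lambda|=kr$, the expansion $\twist(f)=\sum_{D\in\mathcal{D}_{r,\lambda}(G)}\langle\mathbf{D},f\rangle\,\fwt_G(D)$. Here we specialize to $n=12$, $\lambda=(1^{12})$, so $|\lambda|=12=kr$ forces $r=12/k$ (i.e.\ $r=4$ when $k=3$ and $r=3$ when $k=4$), and $f=\mathbf{Y}$ is the web invariant of an $\SL_k$-basis web $Y\in\C[\Gr(k,12)]_\lambda$. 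Thus the entire content of the theorem is the identification of the pairing coefficient $\langle\mathbf{D},\mathbf{Y}\rangle$ with $\sign(Y)\,C_{Y^*}^D$.

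First I would expand each $r$-weblike subgraph invariant $\mathbf{D}$ in the dual basis $\B^*$ of $\W_\lambda(\C^r)\cong\C[\Gr(r,12)]_\lambda$ (note $r$ here plays the role that $k$ played in Section~\ref{sec:ComputingDuals}, and by Remark~\ref{rmk:SymmetricFLL} the roles of $3$ and $4$ are symmetric under the FLL pairing): write $\mathbf{D}=\sum_{W\in\B^*}C_W^D\,\mathbf{W}$, which is exactly the definition of the coefficients $C_W^D$ in the statement. Applying bilinearity of the FLL pairing gives $\langle\mathbf{D},\mathbf{Y}\rangle=\sum_{W\in\B^*}C_W^D\,\langle\mathbf{W},\mathbf{Y}\rangle$. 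Now Theorem~\ref{thm:DualityComputation} says precisely that $\B^*$ and $\B$ are dual up to sign: $\langle\mathbf{W}_i,\mathbf{X}_j\rangle=\sign(X_j)\delta_{ij}$, and since $Y$ is a basis web in $\B$ with dual basis web $Y^*=W_i$ for the appropriate $i$, only the term $W=Y^*$ survives, contributing $\langle\mathbf{Y}^*,\mathbf{Y}\rangle=\sign(Y)$. Hence $\langle\mathbf{D},\mathbf{Y}\rangle=\sign(Y)\,C_{Y^*}^D$.

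Substituting this into Theorem~\ref{thm:TwistGivenBySumOfPairings} yields
\[
\twist(\mathbf{Y})=\sum_{D\in\mathcal{D}_{r,(1^{12})}(G)}\sign(Y)\,C_{Y^*}^D\,\fwt_G(D)=\sign(Y)\sum_{D\in\mathcal{D}_r(G)}C_{Y^*}^D\,\fwt_G(D),
\]
which is the claimed formula (here $\mathcal{D}_r(G)$ abbreviates $\mathcal{D}_{r,(1^{12})}(G)$, as the boundary condition is forced). The only subtlety, flagged in the footnote, is the Benzene case $Y=X_{16}$, where $Y^*=\mathbf{W}_{16}$ is a difference of two basis webs rather than a single one; there one interprets $C_{Y^*}^D$ as $C_{W^{ben}}^D - C_{W^{dis}}^D$ (equivalently, the coefficient of either Benzene representative, since the pairing with $\mathbf{W}_{16}$ is what matters), and the argument goes through verbatim using $\langle\mathbf{W}_{16},\mathbf{X}_{16}\rangle=\sign(X_{16})=1$ from the proof of Theorem~\ref{thm:DualityComputation}. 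The proof has essentially no obstacle beyond bookkeeping: the real work was done in establishing Theorems~\ref{thm:TwistGivenBySumOfPairings} and~\ref{thm:DualityComputation}, and the mild point to be careful about is matching conventions for $r$ versus $k$ and confirming (via Remark~\ref{rmk:SymmetricFLL}) that the duality of Section~\ref{sec:ComputingDuals} applies symmetrically whether one views the $\SL_3$ or the $\SL_4$ side as the ``web'' side.
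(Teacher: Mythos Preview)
Your proof is correct and follows essentially the same approach as the paper's: apply Theorem~\ref{thm:TwistGivenBySumOfPairings}, expand each $\mathbf{D}$ in the $\SL_r$ basis, and collapse the inner sum using Theorem~\ref{thm:DualityComputation}. If anything you are slightly more careful than the paper in explicitly invoking Remark~\ref{rmk:SymmetricFLL} to handle the $k=4$ case and in unpacking the Benzene footnote.
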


\begin{proof}
Let $r = \frac{12}{k}$ and $\mathcal{W}$ be the set of $\SL_r$ rotation-invariant basis webs for $\mathcal{W}_\lambda(\mathbb{C}^r)$. By first applying Theorem \ref{thm:TwistGivenBySumOfPairings} and then Theorem \ref{thm:DualityComputation}, we have 
$$\tau(\mathbf{Y}) = \sum_{D \in \mathcal{D}_{r,\lambda}(G)} \langle \mathbf{D},\mathbf{Y} \rangle \fwt_G(D) = \sum_{D \in \mathcal{D}_{r,\lambda}(G)} \sum_{W \in \mathcal{W}} C_W^D \langle \mathbf{W},\mathbf{Y} \rangle \fwt_G(D) = \sign(Y) \sum_{D \in \mathcal{D}_{r,\lambda}(G)} C_{Y^*}^D \fwt_G(D),$$
where $Y^*$ is the $\SL_r$-web dual to $Y$.
\end{proof}

In \cite{EMW23}, the authors provide several expansion formulas akin to Theorem \ref{thm:twist} for cluster variables associated to several $\Gr(3,6),\Gr(3,8)$ and $\Gr(3,9)$ webs. 

\begin{example}
Let $G$ be the reduced top cell plabic graph of type $(3,12)$  appearing in Figure \ref{fig:top cell plabic graph}.
Consider the basis web invariant $\mathbf{X}_{28}^\text{FP} \in \mathbb{C}[\Gr(3,12)]_{(1^{12})}$ associated to the Pl\"{u}cker degree four $\SL_3$ web $X_{28}$ (see Table \ref{tablep2}) in the FP sign convention.
We can expand $\mathbf{X}^\text{FP}_{28}$ in terms of Pl\"ucker coordinates 
using the wrench relation,
\begin{eqnarray*}\mathbf{X}_{28}^\text{FP} &=& (\Delta_{1,3,4}\Delta_{2,7,8} - \Delta_{1,7,8}\Delta_{2,3,4})
(\Delta_{5,9,10}\Delta_{6,11,12} - \Delta_{5,11,12}\Delta_{6,9,10})\\ 
&-& (\Delta_{1,3,4}\Delta_{2,5,6} - \Delta_{1,5,6}\Delta_{2,3,4})
(\Delta_{7,9,10}\Delta_{8,11,12} - \Delta_{7,11,12}\Delta_{8,9,10})
\end{eqnarray*}
We then use \cite[Prop. 4.3]{EMW23} to write its twist, 
$$\tau(\mathbf{X}_{28}^\text{FP}) ~~=~~ [\Delta_{1,2,12}\Delta_{2,3,4}\Delta_{4,5,6}\Delta_{6,7,8}\Delta_{8,9,10}\Delta_{10,11,12}] (\Delta_{1,7,11}\Delta_{3,5,9} - \Delta_{1,9,11}\Delta_{3,5,7})$$
which is a Pl\"{u}cker degree two cluster variable multiplied by a Pl\"{u}cker degree six frozen monomial.  
Thus, as an application of Theorem \ref{thm:twist}, the Laurent expansion 
$$\tau(\mathbf{X}_{28}^\text{FP}) ~~=~~ 
\frac{\Delta_{1,2,12}\Delta_{2,3,4}\Delta_{4,5,6}\Delta_{6,7,8}\Delta_{8,9,10}\Delta_{10,11,12}(\Delta_{1,3,5}\Delta_{1,9,11}\Delta_{5,7,9} + \Delta_{1,5,11}\Delta_{1,7,9}\Delta_{3,5,9})}{\Delta_{1,5,9}}$$
relative to this choice of initial seed agrees with the face weights of the only two quadruple dimer covers of this plabic graph, as illustrated in Figure \ref{fig:QuadEx} (top), that admit a $4$-weblike subgraph, see Figure \ref{fig:QuadEx} (bottom), that expands via $\SL_4$ skein relations
to include $W_{28}$, the dual of $X_{28}$, in its support. In this case, in each expansion, $W_{28}$ appears with coefficient 1. 
Note that the $4$-weblike subgraph on the right has forbidden squares in its move equivalence class which allow us to apply the $\SL_4$-skein relations. 
\end{example}

In light of Theorem~\ref{thm:DualityComputation}, and the observations in the Appendix in \cite{FLL19}, it is natural to hope for generalizations. Given $n,k,r$ with $n = kr$ and a $k\times r$ standard Young tableau $T$ and its transpose $T^t$, are there corresponding web basis elements $W$ and $W^*$, related by web duality, associated to $\C[\Gr(k,n)]$ and $\C[\Gr(r,n)]$, respectively, such that the Laurent expansion of the Pl\"{u}cker degree $r$ $\SL_k$-web invariant $\mathbf{W}$ in the cluster algebra for $\mathbb{C}[\Gr(k,n)]$ may be expressed as in Theorem \ref{thm:TwistGivenBySumOfPairings} using the dual web $W^*$? Or, similar to our web 16 in Table \ref{tablep1}, are there other examples where one basis web is dual to a linear combination of basis webs? The difficulty heretofore of producing web bases possessing natural symmetries (such as those of \cite{Weyl1932, kup96, GPPSS24, fraser23}) suggests that these may be hard questions.

\begin{figure}
\begin{center}
\begin{tabular}{cc}  
 \includegraphics[width=.335\textwidth]{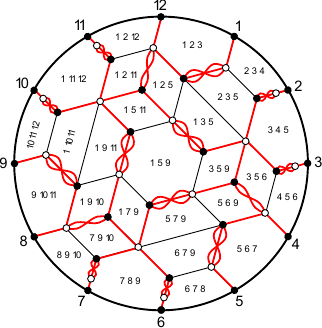}
&
\includegraphics[width=.335\textwidth]{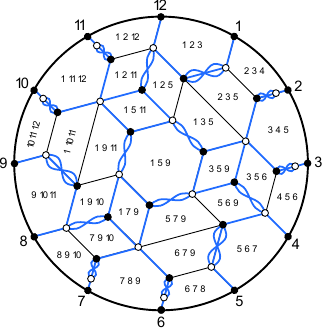} \\
\includegraphics[width=.22\textwidth]{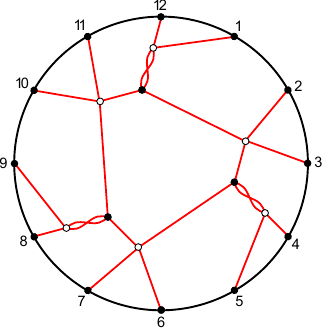} &
\includegraphics[width=.22\textwidth]{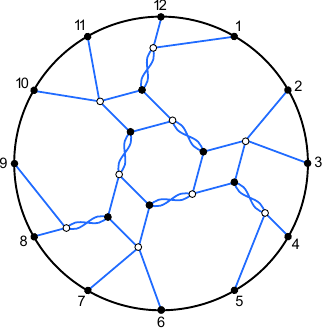}
\end{tabular}
\caption{(Top): Two $4$-dimer covers on the plabic graph $G$ for $\mathbb{C}[\Gr(3,12)]$; \hspace{7em} (Bottom): The $4$-weblike subgraphs equivalent to these choices of $4$-dimer covers.}
\label{fig:QuadEx}
\end{center}
\end{figure}



\section{Enumeration of cluster variables}
\label{sec:Counting} 

In \cite{CDHHHL22}, the authors use high-performance computing to conjecture the number of cluster variables of small Pl\"{u}cker degrees in $\mathbb{C}[\Gr(3,n)]$ and $\mathbb{C}[\Gr(4,n)]$.
In particular, letting $N_{k,n,r}$ be the number of cluster variables of Pl\"{u}cker degree $r$ in $\mathbb{C}[\Gr(k,n)]$, the authors conjecture\footnote{The 52 is a 48 in \cite[Conjecture 3.1]{CDHHHL22}, but has been confirmed by one of the authors of \cite{CDHHHL22} to be a typo.}
\begin{align}
    N_{3,n,4} &= 288\tbinom{n}{9} + 400\tbinom{n}{10} + 264\tbinom{n}{11} + 52\tbinom{n}{12}; \label{eq:N_3,n,4}\\
    N_{4,n,3} &= 174\tbinom{n}{9} + 855\tbinom{n}{10} + 1285\tbinom{n}{11} +123\tbinom{n}{12}. \label{eq:N_4,n,3}
\end{align}
Here, we compare this computational conjecture with that of Fomin--Pylyavskyy \cite{FP16}, which we now recall. 
 
We can define the unclasping of a tensor diagram as we did for webs in Definition~\ref{def:unclasping}. A tensor diagram whose unclasping is connected and contains no cycles is a \emph{tree diagram}. Fomin and Pylyavskyy introduced the \emph{arborization algorithm}, which takes as input any $\SL_3$ tensor diagram and outputs the result of applying arborizing steps (\cite[Definition 10.3]{FP16}) as many times as possible. See Example \ref{example:arborization} for illustrations of the arborization algorithm. A web $W$ is \emph{arborizable} if the result of applying the arborization algorithm to $W$ is a tree diagram; in this case we also say that the associated web invariant is arborizable. By \cite[Theorem 10.5]{FP16}, the arborization algorithm is confluent, so this notion is well-defined. 

\begin{example}\label{example:arborization}

We demonstrate the process of clasping web $X_{29}$ at vertices $\{1,12\}$ and at vertices $\{10,11\}$ and then applying the arborization algorithm. We only apply one arborizing step (corresponding to the left part of \cite[Figure 27]{FP16}). The resulting tensor diagram is a tree diagram since it is a tree if we unclasp all vertices. Therefore, we say the clasping of $X_{29}$ is \emph{arborizable}. Counting rotations and reflections of this clasped web, there are 20 webs in the entire dihedral orbit.

Similarly, we clasp vertices $\{1,11,12\}$ in web $X_{32}$. Applying the wrench relation three times yields a tree diagram, so this clasping of web $X_{32}$ is arborizable. There are also 20 webs in its dihedral orbit.

\begin{figure}[H]

\newcommand{\web}[1]{\adjustbox{valign=c}{\includegraphics[width=.15\textwidth]{webs/#1.pdf}}}

\[
\web{_3,12_web38}\ \xrightarrow[\{1,12\},\{10,11\}]{\text{clasp }}\ 
\web{_3,12_web38_clasped}\ =\ 
\web{_3,12_web38_clasped_wrenchmove} \ \xrightarrow{\text{unclasp}}\ 
\web{_3,12_web38_unclasped}
\]
\begin{align*}
\web{_3,12_web43}\ \xrightarrow[\{1,11,12\}]{\text{clasp }}\ 
\web{_3,10_web43nums}
\ =\ \web{_3,10_web43wrench3} \ \xrightarrow{\text{unclasp}}\ \web{_3,10_web43tree}
 \end{align*}

\caption{The arborization algorithm applied to claspings of webs $X_{29}$ and $X_{32}$.}
\label{fig:arborization}
\end{figure}

\end{example}

Example \ref{example:arborization}  highlights two interesting features to the process of arborization. First, we see that in order to reach a tree, it was necessary to have a non-planar diagram. Moreover, even though $X_{29}$ and $X_{32}$ have internal cycles, the arborizing steps ``pushed'' these cycles onto the boundary. Therefore, when computing all arborizable clasps of $\Gr(3,12)$ webs, we must consider those with internal cycles. 

A web invariant is \emph{indecomposable} if it is not the product of two nontrivial web invariants. For example, $\mathbf{X}_{22}$ is decomposable since the web $X_{22}$ has two connected components, while $\mathbf{X}_{23}$ is indecomposable (see Remark \ref{rem:props of FP signs}). 

\begin{definition}
    Let $\mathscr{T}_{3,n}\subset \C[\Gr(3,n)]$ denote the set of invariants of all arborizable, indecomposable, non-elliptic semistandard $\SL_3$ webs. Let $T_{3,n,r}\coloneq\#\{\mathbf{X}\in\mathscr{T}_{3,n}\mid \mathbf{X} \text{ has Pl\"ucker degree }r\}.$
\end{definition}

Fomin--Pylyavskyy conjectured \cite[Conjectures 9.1, 10.1, 10.6]{FP16} that the set of cluster and coefficient variables in $\mathbb{C}[\Gr(3,n)]$ is exactly $\mathscr{T}_{3,n}$. In the special case when a standard web is already a tree diagram, the associated web invariant is known to be a cluster variable (see also \cite{le-yildirim} for evidence in support of these conjectures):

\begin{theorem}[\cite{FP16}, Corollary 8.10] \label{thm:TreesAreClusterVariables}
Let $X$ be a standard $\SL_3$ web which is a tree diagram. Then $\mathbf{X}$ is a cluster or coefficient variable in $\C[\Gr(3,n)]$. 
\end{theorem}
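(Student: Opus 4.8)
The plan is to reduce the statement to the Marsh--Scott style twist description already developed: namely, for a standard $\SL_3$ web $X$ that is a tree diagram, I would exhibit a reduced plabic graph $G$ of type $(3,n)$ such that $\mathbf{X}$ appears as a face label $\Delta_{I_{\face}}$ of $G$, hence is manifestly a cluster or coefficient variable. The key idea is that a tree diagram is, up to planar isotopy and the insertion/deletion of degree-2 vertices, exactly (a subgraph of) a reduced plabic graph: its internal vertices are trivalent and properly colored, it is planar, and having no internal cycles means it contributes the minimal number of faces. So the proof really hinges on the fact that a standard $\SL_3$ tree diagram can be completed to a reduced top-cell plabic graph $G$ of type $(3,n)$ in which one of the faces $\face$ has $I_{\face}$ with $\mathbf{X} = \Delta_{I_{\face}}$, or more precisely such that $\twist(\Delta_{I_{\face}})$ is, up to a monomial in frozens, equal to $\mathbf{X}$.

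First I would recall from Theorem~\ref{thm:rotation-is-promotion} and the discussion of web bases that a standard $\SL_3$ tree diagram is non-elliptic (trees have no cycles at all, so in particular no 2- or 4-cycles) and hence lies in Kuperberg's basis $\B$ for $\W_{(1^n)}(\C^3)$; by Theorem~\ref{thm:asw1} it has $a(\mathcal{S}_X;X)=1$. Next, by Theorem~\ref{thm:DualityComputation} (or, for general $n$, the analogous Fraser--Lam--Le duality cited in the introduction for the cases that are known, together with Proposition~\ref{proposition:PairingAsCountingLabelings}), the dual under the FLL pairing of such a tree-diagram basis web is itself a basis web $W^* = X^*$, whose tableau is the transpose $T(X)^t$. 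I would then apply Theorem~\ref{thm:TwistGivenBySumOfPairings} with $f = \mathbf{X}$: the twist $\twist(\mathbf{X})$ is the dimer partition function $\sum_{D} \langle \mathbf{D}, \mathbf{X}\rangle \fwt_G(D)$, and the coefficients $\langle \mathbf{D},\mathbf{X}\rangle = a(\mathcal{S}_{\mathbf{X}};D)$ count colorings, which by the tree structure of $X$ one can show are all $0$ or $1$.

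The main obstacle, and the step I expect to require the most care, is going the other direction: rather than computing $\twist(\mathbf{X})$, one must identify $\mathbf{X}$ itself with a cluster variable. The cleanest route is to invoke the inverse twist: since $\twist$ is an automorphism of $\C[\Gr(3,n)]$ (Proposition 8.10 of \cite{marsh2016twists}) sending cluster variables to cluster variables up to frozen monomials, it suffices to show $\twist^{-1}(\mathbf{X})$ — equivalently, some twist-translate of $\mathbf{X}$ — is a Pl\"ucker-type face label. Here is where the combinatorics of tree diagrams enters decisively: I would argue that a standard $\SL_3$ tree diagram $X$, viewed as a plabic graph fragment, can be embedded as the ``spine'' of a reduced top-cell plabic graph $G$ so that $\mathbf{X} = \Delta_{I_{\face}}$ for a face $\face$ of $G$ — this is essentially the content of Fomin--Pylyavskyy's analysis of tree diagrams and their relation to $\Gr(3,n)$ seeds, and it is the point at which the hypothesis ``$X$ is a \emph{tree} diagram'' (no internal cycles, so minimal face count, so the plabic graph is reduced) is genuinely used. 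Once $G$ is constructed, $\mathbf{X} = \Delta_{I_{\face}}$ is a cluster variable if $\face$ is a mutable face and a coefficient (frozen) variable if $\face$ is one of the $n$ boundary faces, which completes the proof. The delicate bookkeeping is in making the embedding $X \hookrightarrow G$ explicit and checking reducedness via the trip-permutation criterion; I would expect to handle this by induction on the number of internal vertices of $X$, peeling off a leaf edge at each step and correspondingly contracting a face of $G$.
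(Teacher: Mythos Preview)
The paper does not prove this theorem at all; it is simply quoted from Fomin--Pylyavskyy \cite{FP16} as an external input, used later in Section~\ref{sec:Counting}. So there is no ``paper's own proof'' to compare against.

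That said, your proposal has a genuine gap that would prevent it from going through even as an independent argument. The core step you outline is to embed the tree diagram $X$ into a reduced top cell plabic graph $G$ so that $\mathbf{X}=\Delta_{I_{\face}}$ for some face $\face$. But the face labels of a plabic graph are always Pl\"ucker coordinates, i.e.\ cluster variables of Pl\"ucker degree~$1$. A standard $\SL_3$ tree diagram of Pl\"ucker degree $r>1$ (for instance $X_{23},X_{24},X_{25},X_{28}$ in Tables~\ref{tablep1}--\ref{tablep2}) gives an invariant $\mathbf{X}$ of degree~$r$, which can never equal any $\Delta_{I_{\face}}$. So the identification you aim for is impossible for exactly the interesting cases. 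You also invoke Theorem~\ref{thm:DualityComputation}, but that is only established for $n=12$; it does not supply the duality you need for general~$n$.

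The actual argument in \cite{FP16} is of a quite different flavor: Fomin and Pylyavskyy exhibit tree invariants as cluster variables by producing explicit seeds (not the plabic--graph Pl\"ucker seeds) or explicit mutation sequences that reach them, using the skein--theoretic exchange relations among $\SL_3$ tensor diagrams. The ``tree'' hypothesis is used there to control those exchanges inductively, not to force reducedness of an ambient plabic graph. If you want to reconstruct a proof, that is the direction to look; the twist and FLL--pairing machinery of the present paper gives Laurent expansions of $\twist(\mathbf{X})$ but does not, on its own, certify that $\mathbf{X}$ is a cluster variable.
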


By enumerating all arborizable, indecomposable semistandard $\SL_3$ basis webs of Pl\"{u}cker degree 4, we have found that the conjectured formula \eqref{eq:N_3,n,4} of \cite{CDHHHL22} is consistent with the Fomin--Pylyavskyy conjectures, providing evidence in favor of both conjectures.

\begin{theorem}\label{thm:enum-of-arborizable-webs} 
    $T_{3,n,4}=288\tbinom{n}{9} + 400\tbinom{n}{10} + 264\tbinom{n}{11} + 52\tbinom{n}{12}$ (compare with Equation \eqref{eq:N_3,n,4}).
\end{theorem}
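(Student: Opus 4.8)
The plan is to prove this enumeration by a direct, computer-assisted census of the relevant $\SL_3$ webs, organized by Pl\"ucker degree~$4$ and by the support size of the degree vector $\lambda\in\{0,1\}^n$. Since every web of Pl\"ucker degree $4$ in $\C[\Gr(3,n)]$ has $|\lambda|=12$, the support of $\lambda$ has size between $9$ and $12$ (a boundary vertex can absorb at most $3$ of the $12$ half-edges, so $12/3=4$ boundary vertices give the minimum support $9$). Thus it suffices to enumerate, for each support size $m\in\{9,10,11,12\}$, the number of arborizable, indecomposable, non-elliptic semistandard $\SL_3$ webs whose degree vector is a fixed vector of that support size; call these counts $c_9,c_{10},c_{11},c_{12}$. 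Then $T_{3,n,4}=\sum_{m=9}^{12}c_m\binom{n}{m}$, since each such web on a fixed cyclic set of $m$ boundary positions can be placed in $\binom{n}{m}$ ways, and distinct placements give distinct invariants. The target is therefore to show $(c_9,c_{10},c_{11},c_{12})=(288,400,264,52)$.

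First I would reduce to basis webs. By Kuperberg's theorem, every element of $\C[\Gr(3,n)]_\lambda$ expands in the non-elliptic web basis, and the Fomin--Pylyavskyy framework only asks about non-elliptic semistandard webs, so we may work basis-web by basis-web. For each of the four support sizes, I would generate all non-elliptic semistandard $\SL_3$ webs of that degree — equivalently, via the bijection recalled in Section~\ref{subsec:gr_clusteralgs} and the tableau correspondence, all semistandard Young tableaux of shape $3\times 4$ and the appropriate content — and then for each web (i) test indecomposability by checking whether the web, possibly after moves, disconnects into smaller webs; a web invariant is decomposable iff the underlying web (in its non-elliptic form) is disconnected, which is immediate to read off, and (ii) run the arborization algorithm of \cite[Def.~10.3]{FP16} and test whether the output, after unclasping all boundary vertices, is a tree. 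Both steps are finite and confluent by \cite[Thm.~10.5]{FP16}, so the computation is well-defined; I would carry it out in SageMath as in the rest of the paper. Summing the arborizable indecomposable ones in each class yields the constants $c_m$.

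The main obstacle I anticipate is correctly implementing the arborization algorithm in the non-planar setting. As Example~\ref{example:arborization} emphasizes, arborization genuinely requires passing through non-planar tensor diagrams — the arborizing steps can ``push'' internal cycles out to the boundary — so one cannot restrict attention to planar webs at intermediate stages; the search must track general $\SL_3$ tensor diagrams and apply arborizing moves (and, where helpful, the wrench relation of Figure~\ref{fig:sl3_wrench}) until no arborizing step applies, then test for treehood. Getting the move set complete and the confluence bookkeeping right — and in particular verifying that a web which arborizes only after several non-planar intermediate steps is not missed — is the delicate part; confluence (\cite[Thm.~10.5]{FP16}) guarantees the answer is independent of choices, which is what makes a brute-force implementation legitimate. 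Once that is in place, the remaining work is routine enumeration, and matching the output $(288,400,264,52)$ against \eqref{eq:N_3,n,4} completes the proof.
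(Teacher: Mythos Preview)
Your approach---a finite census of arborizable, indecomposable, non-elliptic semistandard $\SL_3$ webs of Pl\"ucker degree $4$, stratified by the number $m$ of used boundary vertices so that $T_{3,n,4}=\sum_m c_m\binom{n}{m}$---is exactly the paper's. The paper organizes the census by starting from the $32$ dihedral orbits of standard ($m=12$) basis webs already tabulated in Tables~\ref{tablep1}--\ref{tablep2}, picking out the $52$ trees among them (the orbits of $X_{23},X_{24},X_{25},X_{28}$), and then clasping adjacent boundary vertices to reach $m=11,10,9$ and re-testing arborizability and indecomposability; the full lists appear in Appendix~\ref{appendix:clasps}.

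There is one real gap. Your argument that the support of $\lambda$ is at least $9$ (``a boundary vertex can absorb at most $3$ of the $12$ half-edges, so $12/3=4$ boundary vertices give the minimum support $9$'') is incoherent as written and in any case false: a semistandard boundary vertex may have arbitrary degree, and non-elliptic Pl\"ucker-degree-$4$ semistandard $\SL_3$ webs with fewer than $9$ used vertices certainly exist. What is true---and what the paper verifies rather than assumes---is that every such web with $\le 8$ used boundary vertices is \emph{decomposable} (consistent with $\C[\Gr(3,n)]$ being of finite cluster type for $n\le 8$, where no degree-$4$ cluster variables occur). So $c_m=0$ for $m\le 8$ is a conclusion of the census, not a premise; your enumeration must either include those cases or supply a separate argument for them.
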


\begin{proof}
    In the case of a standard web, i.e.\ each boundary vertex is adjacent to exactly one edge, no arborization steps can be applied. This means that a standard web is arborizable and indecomposable if and only if it is a tree diagram. In the web basis for $\C[\Gr(3,12)]_{(1^{12})}$, the only trees are the dihedral orbits of webs $X_{23}$, $X_{24}$, $X_{25}$, and $X_{28}$, as shown in Table \ref{tablep2}. There are exactly 52 distinct rotations and reflections of these webs, matching the coefficient of ${n\choose 12}$ in \eqref{eq:N_3,n,4}. For each $X$ among these 52 webs and any $n \geq 12$, there are $52{n\choose 12}$ webs for $\C[\Gr(3,n)]$ obtained by attaching $X$ onto the boundary vertices $\{a_1,a_2,\ldots,a_{12}\} \in {[n]\choose 12}$. In particular, the remaining $n-12$ boundary vertices are not incident to any edges.

    To obtain the coefficients of ${n\choose 11}$, ${n\choose 10}$, and ${n\choose 9}$, we clasp adjacent boundary vertices of the indecomposable $\SL_3$ webs in Tables \ref{tablep1} and \ref{tablep2} to obtain webs for $\C[\Gr(3,11)]$, $\C[\Gr(3,10)]$, or $\C[\Gr(3,9)]$, and check whether the resulting web is arborizable and indecomposable. We illustrate this process in Example \ref{example:arborization}, and provide the full list of arborizable indecomposable webs in Appendix \ref{appendix:clasps}.
    As one simplifying step, note that if a web $X$ is arborizable, and if clasping adjacent boundary vertices of $X$ produces a boundary 4-cycle, then the resulting web is decomposable. Finally, we note that every web of Pl\"ucker degree 4 which has 8 or fewer boundary vertices is decomposable\footnote{This is consistent with the fact that $\C[\Gr(3,n)]$ is of finite type for $n \le 8,$ and in these cases there are no cluster variables of Pl\"ucker degree 4.}.
\end{proof}

\begin{remark}
    Fraser \cite[Theorem 9.10]{fraser20} showed that every cluster variable in $\C[\Gr(3,9)]$ (which is of finite mutation type) is an arborizable indecomposable non-elliptic web invariant. The authors of \cite{CDHHHL22} computed 288 cluster variables of Pl\"ucker degree 4 in $\C[\Gr(3,9)]$, and we find that $T_{3,9,4}=288$ (Theorem \ref{thm:enum-of-arborizable-webs}). We therefore conclude that in Pl\"ucker degree 4, the set of cluster variables in $\C[\Gr(3,9)]$ is exactly the set of arborizable indecomposable non-elliptic web invariants. One representative web in each dihedral orbit is illustrated in the middle column of Table \ref{table:Gr(3,9)clvars}.
\end{remark}

We now restrict to arborizable, indecomposable webs which are \emph{standard}, while generalizing to $k\ge 3$.

\begin{definition}
    Let $\mathscr{T}_{k,n}^\text{std}\subset \C[\Gr(k,n)]$ denote the set of invariants of all indecomposable, standard $\SL_k$ webs which are trees (possibly with edges of multiplicity $>1$). We also set $T_{k,n,r}^\text{std}\coloneq\#\{\mathbf{X}\in\mathscr{T}_{k,n}^\text{std}\mid \mathbf{X} \text{ has Pl\"ucker degree }r\}.$
\end{definition}

$\mathscr{T}_{k,n}^\text{std}$ is in bijection with the set of standard tree diagrams which have no 2-valent vertices. 

\begin{remark} When $k=4$, from Tables \ref{tablep1} and \ref{tablep2} we observe that the only trees in the web basis for $\C[\Gr(4,12)]_{(1^{12})}$ are the dihedral orbits of webs $W_{12}$, $W_{13}$, $W_{14}$, $W_{17}$, $W_{22}$, $W_{24}$, $W_{29}$, and $W_{31}$, totaling 123 webs. 
That is, 
\[T_{4,12,3}^{\text{std}}=123.\] This matches the coefficient of ${n\choose 12}$ in Equation \eqref{eq:N_4,n,3}. 
\end{remark}

In the case when $k=3$, we obtain a closed formula for $T_{3,3r,r}^\text{std}$. By Theorem \ref{thm:TreesAreClusterVariables}, $T_{3,3r,r}^\text{std}$ is a lower bound for the number of cluster variables in $\C[\Gr(3,3r)]_{(1^{3r})}$, and by the Fomin--Pylyavskyy conjectures these are expected to be equal.

\begin{proposition}\label{prop:counting_trees} 
For $r \geq 1$, we have $\displaystyle T_{3,3r,r}^\text{std} = {4r-3\choose r-1}\dfrac{2}{3r-1}.$ \footnote{This sequence is \href{https://oeis.org/A069271}{OEIS A069271}, with indexing shifted by 1.}
\end{proposition}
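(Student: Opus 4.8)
The plan is to count $T^{\mathrm{std}}_{3,3r,r}$ by an explicit bijection and then evaluate the resulting number with a Fuss--Catalan identity. Fix $r\ge 1$ and let $\mathscr T^{(r)}$ denote the set of standard $\SL_3$ tree webs with $3r$ boundary vertices and no $2$-valent vertex, which by the observation just before the proposition is what $T^{\mathrm{std}}_{3,3r,r}$ counts. First I would establish a normal form. Since each boundary vertex meets a single multiplicity-one edge and no internal vertex is $2$-valent, the $\SL_3$ condition $\sum_{v\in\mathbf e}m(\mathbf e)=3$ at each internal vertex forces every internal vertex to be trivalent and every edge to have multiplicity one. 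Hence a web $W\in\mathscr T^{(r)}$ is a properly $2$-colored plane tree with $3r$ black boundary leaves and all internal vertices of degree $3$. A leaf count gives $3r-2$ internal vertices, and since the bipartition must place all leaves on the black side, comparing edge-endpoints on the two color classes gives exactly $r-1$ internal black vertices and $2r-1$ internal white vertices, the black ones pairwise non-adjacent.

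Next I would carry out the two-step bijection indicated in Figure~\ref{fig:webs&trees}. Root $W$ at the edge incident to boundary vertex $3r$; the other endpoint of this edge is white and becomes the root. Because all internal vertices are trivalent, $W$ turns into a plane full binary tree $B$ whose $3r-1$ leaves are the boundary vertices $1,\dots,3r-1$ in cyclic order; because the root is white and all leaves are black, the color of a node of $B$ equals the parity of its depth, so every leaf of $B$ lies at odd depth. (Conversely, a plane full binary tree all of whose leaves lie at odd depth automatically has a number of leaves congruent to $2$ modulo $3$ -- a short count forces this -- so it arises as some $B$.) Now collapse each black node of $B$, which occurs at odd depth and is therefore internal, together with its two white children into a single node whose four ports are its four grandchildren; this sends the two subtrees below the root of $B$ to an ordered pair $(Q_1,Q_2)$ of plane $4$-ary trees. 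Along this chain the $r-1$ internal black vertices of $W$ become the internal nodes of $Q_1$ and $Q_2$ (total $r-1$), and the boundary vertices $1,\dots,3r-1$ become the leaves of the pair. Each arrow is reversible. The step I expect to be the real work is checking that ``standard'' together with ``no $2$-valent vertex'' is exactly what forces $B$ to be a full binary tree with all leaves at odd depth -- so that the grandchild grouping always produces genuine $4$-ary nodes, each with four children -- and that the cyclic boundary order is recorded faithfully and without overcounting by the plane structure of $(Q_1,Q_2)$ once vertex $3r$ is declared the root. The small cases $r=1$ (the tripod $\leftrightarrow$ the pair of empty trees) and $r=2$ (the two webs splitting the leaves into three cyclically consecutive pairs $\leftrightarrow$ the pair consisting of the one-node $4$-ary tree and an empty tree, in either order) are useful sanity checks and can be verified directly, and they anchor an induction that peels off the branch meeting the root.

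Finally, granting the bijection, $T^{\mathrm{std}}_{3,3r,r}$ equals the number of ordered pairs of plane $4$-ary trees with $r-1$ internal nodes in total. Let $B(x)=\sum_{j\ge 0}\frac{1}{3j+1}\binom{4j}{j}x^j$ be the generating function for plane $4$-ary trees by number of internal nodes, so $B=1+xB^4$. By Lagrange inversion -- equivalently the Raney/Fuss--Catalan formula $[x^m]B(x)^s=\frac{s}{4m+s}\binom{4m+s}{m}$ -- taking $s=2$ and $m=r-1$ gives
\[
T^{\mathrm{std}}_{3,3r,r}=[x^{r-1}]B(x)^2=\frac{2}{4r-2}\binom{4r-2}{r-1}=\frac{1}{2r-1}\binom{4r-2}{r-1}.
\]
Using $\binom{4r-2}{r-1}=\frac{4r-2}{3r-1}\binom{4r-3}{r-1}$ together with $\frac{4r-2}{2r-1}=2$ rewrites this as $\frac{2}{3r-1}\binom{4r-3}{r-1}$, which is the asserted formula; it specializes to $1,2,9,52$ for $r=1,2,3,4$, the value at $r=4$ matching the count of $52$ in Theorem~\ref{thm:enum-of-arborizable-webs}.
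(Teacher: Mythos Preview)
Your argument is correct and follows essentially the same two-step bijection as the paper (web $\to$ rooted full binary tree $\to$ ordered pair of full $4$-ary trees), with the only substantive difference being that you carry out the final count explicitly via the Fuss--Catalan/Raney formula $[x^{r-1}]B(x)^2=\tfrac{2}{4r-2}\binom{4r-2}{r-1}$, whereas the paper simply cites OEIS~A069271 and \cite{Selkirk19}. One small expository slip: the clause ``which occurs at odd depth and is therefore internal'' is not quite right, since the black leaves also sit at odd depth; what you mean (and what the operation actually does) is to collapse each \emph{internal} black node with its two necessarily white, necessarily internal children.
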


\begin{proof}
Let $X$ be a standard $\SL_3$ web of Pl\"ucker degree $r$ which is a tree diagram on $n=3r$ boundary vertices. The web $X$ gives rise to a binary tree $B$ as follows: delete the boundary vertex labeled by $n$ and its incident edge, let the white vertex adjacent to $n$ become the root, and unwrap the web, preserving planarity. This is easily seen to be a bijection between the set $\{\mathbf{X}\in\mathscr{T}_{3,n}^\text{std}\mid \mathbf{X} \text{ has Pl\"ucker degree }r\}$ and the set of full, ordered binary trees with $3r-1$ leaves (that is, $3r-2$ internal vertices), which are bipartite, such that the root is white and all leaves are black. From each such binary tree $B$, delete the (white) root vertex and its two incident edges, and contract the edge between each remaining white vertex and its parent black vertex. This produces an ordered pair $Q$ of full, ordered 4-ary trees. Figure \ref{fig:webs&trees} illustrates this algorithm for two examples. In both $X$ and $B$, there are $3r-2$ total internal vertices, and the exceedance is $r$. So the number of internal black vertices in $X$ and $B$ is $\frac{1}{2}(3r-2-r)=r-1$. This is the total number of internal vertices in $Q$, so $\{\mathbf{X}\in\mathscr{T}_{3,n}^\text{std}\mid \mathbf{X} \text{ has Pl\"ucker degree }r\}$ is in bijection with the set of ordered pairs of full, ordered 4-ary trees with $r-1$ total internal vertices. This set is enumerated by \href{https://oeis.org/A069271}{OEIS A069271}, with the indexing shifted by 1 \cite{Selkirk19}. 

\end{proof}

\begin{figure}[H]
    \begin{tabular}{c c c}
    \includegraphics[scale = 0.6]{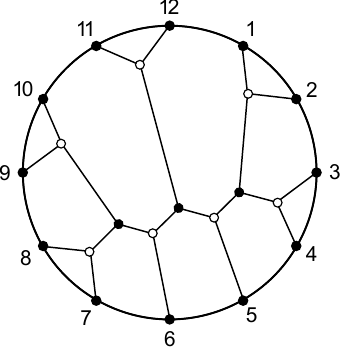}
    &
    \quad\includegraphics[scale = 0.6]
    {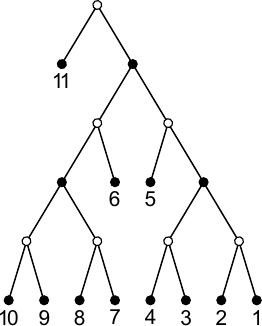}
    &
    \quad\includegraphics[scale = 0.6]{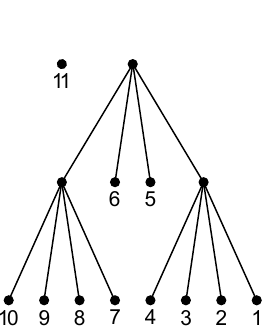}
    \\
    \includegraphics[scale = 0.6]{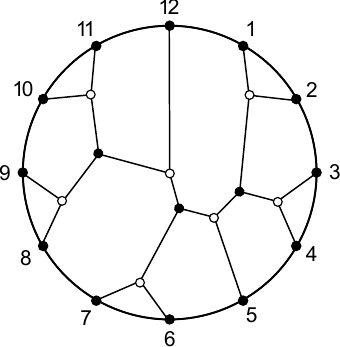}
    &
    \quad\includegraphics[scale = 0.6]
    {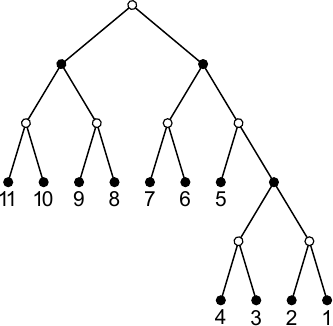}
    &
    \quad\includegraphics[scale = 0.6]{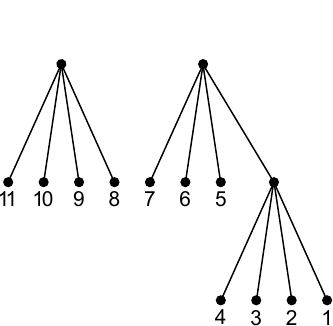}
    \end{tabular}
    
    \caption{Two examples of the bijection in the proof of Proposition \ref{prop:counting_trees}, between webs $X\in\mathscr{T}_{3,12}^\text{std}$ (left), binary trees $B$ (center), and ordered pairs of 4-ary trees $Q$ (right).}
    \label{fig:webs&trees}
\end{figure}

\newpage

\appendix

\section{Dual web bases for $\W_\lambda(\C^3)$ and $\W_\lambda(\C^4)$ when $\lambda=(1^{12})$}

Here we illustrate standard $\SL_4$ webs and $\SL_3$ webs when $n=12$.  These are each grouped together in their rotation and reflection equivalence classes and are paired as dual bases (see Theorem \ref{thm:DualityComputation}) with respect to the FLL pairing, with the Yamanouchi words of their associated standard Young tableaux included.


 
\NewColumnType{N}{@{}Q[t,c,4pt]}  
\NewColumnType{W}{@{}Q[t,c,.16\textwidth]@{}}

\newcommand{\num}[1]{\adjustbox{valign=t}{#1.}}

\newcommand{\web}[2]{\adjustbox{valign=t}{\includegraphics[width=.15\textwidth]{webs/_#1,12_web#2.pdf}}}

\newcommand{\word}[1]{\adjustbox{raise=-10pt}{#1}}

\begin{table}[H]
\begin{center}
\small{

\begin{tblr}{colspec={ | N W W | N W W | N W W | }}
\hline

\num{1} 
&{\web{4}{1} \\ \word{111222333444}}
&{\web{3}{1} \\ \word{123123123123}}
&\num{2}
&{\web{4}{2} \\ \word{111222334344}}
&{\web{3}{2} \\ \word{123123121323}}
&\num{3}
&{\web{4}{4} \\ \word{111222334434}}
&{\web{3}{4} \\ \word{123123121233}}
\\
\hline

\num{4}
&{\web{4}{5} \\ \word{111222343434}}
&{\web{3}{5} \\ \word{123123112233}}
&\num{5}
&{\web{4}{6} \\ \word{111223234344}}
&{\web{3}{6} \\ \word{123121321323}}
&\num{6}
&{\web{4}{18} \\ \word{111223234434}}
&{\web{3}{7} \\ \word{123121321233}}
\\
\hline

\num{7}
&{\web{4}{8} \\ \word{111223243344}}
&{\web{3}{8} \\ \word{123121312323}}
&\num{8}
&{\web{4}{20} \\ \word{111223243434}}
&{\web{3}{20} \\ \word{123121312233}}
&\num{9}
&{\web{4}{13} \\ \word{111223342344}}
&{\web{3}{13} \\ \word{123121213323}}
\\ 
\hline

\num{10}
&{\web{4}{24} \\ \word{111223342434}}
&{\web{3}{10} \\ \word{123121213233}}
&\num{11}
&{\web{4}{12} \\ \word{111223344234}}
&{\web{3}{12} \\ \word{123121212333}}
&\num{12}
&{\web{4}{21} \\ \word{111223423434}}
&{\web{3}{21} \\ \word{123121132233}}
\\
\hline

\num{13}
&{\web{4}{15} \\ \word{111223432344}}
&{\web{3}{15} \\ \word{123121123323}}
&\num{14}
&{\web{4}{26} \\ \word{111223432434}}
&{\web{3}{26} \\ \word{123121123233}}
&\num{15}
&{\web{4}{28} \\ \word{111223434234}}
&{\web{3}{28} \\ \word{123121122333}}
\\
\hline

\num{16}
&\SetCell[c=7]{}
{\adjustbox{valign=t}{$\adjustbox{valign=c}{\includegraphics[width=.15\textwidth]{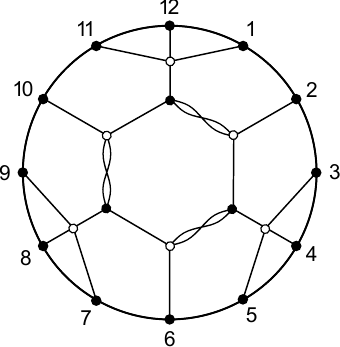}} -  \adjustbox{valign=c}{\includegraphics[width=.15\textwidth]{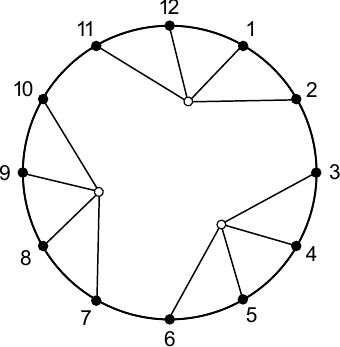}} =  
\adjustbox{valign=c}{\includegraphics[width=.15\textwidth]{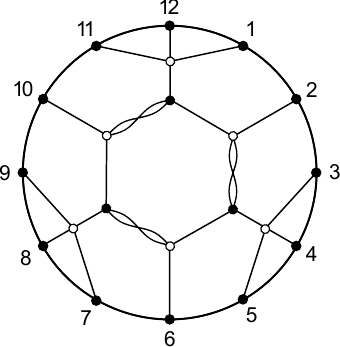}} -  \adjustbox{valign=c}{\includegraphics[width=.15\textwidth]{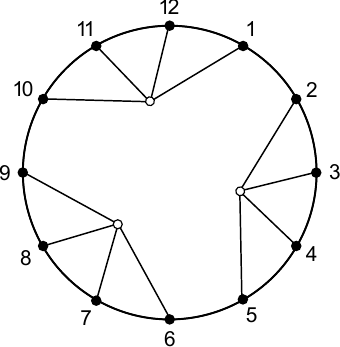}}$} \\ \word{111232234434}}
&&&&&&& {\web{3}{19} \\ \word{123112321233}}
\\
\hline

\end{tblr}
}
\end{center}
\caption{Dual bases for $\W_\lambda(\C^4)$ and $\W_\lambda(\C^3)$, $\lambda=(1,\ldots,1)$. The cell numbered $i$ depicts webs yielding dual basis elements $\mathbf{W}_i \in \W_\lambda(\C^4)$ (left) and $\mathbf{X}_i\in\W_\lambda(\C^3)$ (right), as well as the Yamanouchi words of their associated standard Young tableaux.}
\label{tablep1}
\end{table}

\begin{table}[H]
\begin{center}
\small{

\begin{tblr}{vline{7}={1-5}{solid}, vline{Z}={1-5}{solid},
             colspec={ | N W W | N W W  N W W  }}
\hline

\num{17}
&{\web{4}{25} \\ \word{111232342434}}
&{\web{3}{25} \\ \word{123112213233}}
&\num{18}
&{\web{4}{23} \\ \word{111232344234}}
&{\web{3}{23} \\ \word{123112212333}}
&\num{19}
&{\web{4}{29} \\ \word{111232434234}}
&{\web{3}{29} \\ \word{123112122333}}
\\
\hline

\num{20}
&{\web{4}{30} \\ \word{111234234234}}
&{\web{3}{30} \\ \word{123111222333}}
&\num{21}
&{\web{4}{31} \\ \word{112122334344}}
&{\web{3}{31} \\ \word{121323121323}}
&\num{22}
&{\web{4}{32} \\ \word{112122343434}}
&{\web{3}{32} \\ \word{121323112233}}
\\
\hline

\num{23}
&{\web{4}{33} \\ \word{112123234344}}
&{\web{3}{33} \\ \word{121321321323}}
&\num{24}
&{\web{4}{39} \\ \word{112123243434}}
&{\web{3}{39} \\ \word{121321312233}}
&\num{25}
&{\web{4}{35} \\ \word{112123423344}}
&{\web{3}{35} \\ \word{121321132323}}
\\
\hline

\num{26}
&{\web{4}{40} \\ \word{112123423434}}
&{\web{3}{40} \\ \word{121321132233}}
&\num{27}
&{\web{4}{37} \\ \word{112123434234}}
&{\web{3}{37} \\ \word{121321122333}}
&\num{28}
&{\web{4}{44} \\ \word{112132324344}}
&{\web{3}{44} \\ \word{121312231323}}
\\
\hline

\num{29}
&{\web{4}{38} \\ \word{112132342344}}
&{\web{3}{38} \\ \word{121312213323}}
&\num{30}
&{\web{4}{41} \\ \word{112234123434}}
&{\web{3}{41} \\ \word{121211332233}}
&\num{31}
&{\web{4}{42} \\ \word{112312423434}}
&{\web{3}{42} \\ \word{121132132233}}
\\
\hline

\num{32}
&{\web{4}{43} \\ \word{112341234234}}
&{\web{3}{43} \\ \word{121113222333}}
\\
\cline{1-3}

\end{tblr}
}
\end{center}
\caption{Continued from Table \ref{tablep1}.}
\label{tablep2}
\end{table}


\newpage
\section{Arborizable indecomposable webs for $\C[\widehat{\normalfont{\text{Gr}}}(3,11)]$, $\C[\widehat{\normalfont{\text{Gr}}}(3,10)]$, and $\C[\widehat{\normalfont{\text{Gr}}}(3,9)]$ }
\label{appendix:clasps}

Starting from the webs for $\C[\Gr(3,12)]$ (in Tables \ref{tablep1} and \ref{tablep2}), we iteratively apply clasping to get webs for $\C[\Gr(3,n)]$ for $n=11, 10, 9$, and check which of these are arborizable, indecomposable, and non-elliptic.  We then record how many are in each dihedral orbit.

\renewcommand{\web}[1]{\adjustbox{valign=c}{\includegraphics[width=.145\textwidth]{webs/#1.pdf}}}

\begin{table}[H]
\begin{tabular}{c | l | c}

Unclasped web & \begin{centering} Clasped webs for $\C[\Gr(3,11)]$ \end{centering}& \#
\\
\hline

\web{_3,12_web33nonums}
&\web{_3,11_web33}
&11
\\
\hline

\web{_3,12_web39nonums}
&\web{_3,11_web39a}
\web{_3,11_web39b}
&44
\\
\hline

\web{_3,12_web35nonums}
&\web{_3,11_web35}
&22
\\
\hline

\web{_3,12_web40nonums}
&\web{_3,11_web40a}
\web{_3,11_web40b}
\web{_3,11_web40c}
\web{_3,11_web40d}
\web{_3,11_web40e}
&110
\\
\hline

\web{_3,12_web44nonums}
&\web{_3,11_web44}
&11
\\
\hline

\web{_3,12_web38nonums}
&\web{_3,11_web38a}
\web{_3,11_web38b}
&44
\\
\hline

\web{_3,12_web42nonums}
&\web{_3,11_web42}
&22
\\
\hline

\end{tabular}

\caption{All 264 arborizable, indecomposable, non-elliptic semistandard $\SL_3$ webs of Pl\"ucker degree 4 on 11 boundary vertices, up to rotations and reflections.}
\label{table:Gr(3,11)clvars}
\end{table}

\begin{table}[H]
\begin{tabular}{c | l | c}

Unclasped web & Clasped webs for $\C[\Gr(3,10)]$ & \#
\\
\hline

\web{_3,12_web39nonums}
&\web{_3,10_web39}
&20
\\
\hline

\web{_3,12_web35nonums}
&\web{_3,10_web35}
&10
\\
\hline

\web{_3,12_web40nonums}
&\web{_3,10_web40a}
\web{_3,10_web40b}
\web{_3,10_web40c}
\web{_3,10_web40d}
\web{_3,10_web40e}
&100
\\
\hline

\web{_3,12_web44nonums}
&\web{_3,10_web44}
&10
\\
\hline

\web{_3,12_web38nonums}
&\web{_3,10_web38a}
\web{_3,10_web38b}
\web{_3,10_web38c}
\web{_3,10_web38d}
&80
\\
\hline

\web{_3,12_web41nonums}
&\web{_3,10_web41a}
\web{_3,10_web41b}
\web{_3,10_web41c}
\web{_3,10_web41d}
\web{_3,10_web41e}
&150
\\
&\web{_3,10_web41f}
\web{_3,10_web41g}
\web{_3,10_web41h}
\web{_3,10_web41i}
\\
\hline

\web{_3,12_web42nonums}
&\web{_3,10_web42}
&10
\\
\hline

\web{_3,12_web43nonums}
&\web{_3,10_web43}
&20
\\
\hline
\end{tabular}

\caption{All 400 arborizable, indecomposable, non-elliptic semistandard $\SL_3$ webs of Pl\"ucker degree 4 on 10 boundary vertices, up to rotations and reflections.}
\label{table:Gr(3,10)clvars}
\end{table}


\begin{table}[H]
\begin{tabular}{c | l | c}

Unclasped web & Clasped webs for $\C[\Gr(3,9)]$ & \#
\\
\hline

\web{_3,12_web44nonums}
&\web{_3,9_web44}
&3
\\
\hline

\web{_3,12_web38nonums}
&\web{_3,9_web38a}
\web{_3,9_web38b}
&36
\\
\hline

\web{_3,12_web41nonums}
&\web{_3,9_web41a}
\web{_3,9_web41b}
\web{_3,9_web41c}
\web{_3,9_web41d}
\web{_3,9_web41e}
&135
\\
&\web{_3,9_web41f}
\web{_3,9_web41g}
\web{_3,9_web41h}
\web{_3,9_web41i}
\\
\hline

\web{_3,12_web43nonums}
&\web{_3,9_web43a}
\web{_3,9_web43b}
\web{_3,9_web43c}
\web{_3,9_web43d}
\web{_3,9_web43e}
&114
\\
&\web{_3,9_web43f}
\web{_3,9_web43g}
\\
\hline
\end{tabular}

\caption{All 288 arborizable, indecomposable, non-elliptic semistandard $\SL_3$ webs of Pl\"ucker degree 4 on 9 boundary vertices, up to rotations and reflections.}
\label{table:Gr(3,9)clvars}
\end{table}

\newpage
\section*{Acknowledgments}
We are grateful to Ellis Caird, Alastair King, Jianrong Li, Pasha Pylyavskyy, Melissa Sherman-Bennett, and David Speyer for helpful conversations. We are especially thankful to Chris Fraser for his careful reading of an earlier version of this work, and his suggested interpretation of our results in terms of a twisted boundary measurement map.  We also thank the referees for their  helpful comments that improved the exposition.

E.C. was partially supported by a GAANN Fellowship under Award P200A240046. C.G. was partially supported by NSF grant DMS-2452032 and by a travel grant from the Simons Foundation. K.W. was supported by NSF grant DMS-2039316 through the University of Oregon. G.M. was partially funded by Simons Foundation Travel Support for Mathematicians.  We appreciate the MRWAC 2024 event for bringing our group together, supported by NSF grants DMS-MS-1745638 and DMS-1854162.

\bibliographystyle{plain}
\bibliography{arxiv-v2}
\end{document}